\newcommand\blfootnote[1]{%
  \begingroup
  \renewcommand\thefootnote{}\footnote{#1}%
  \addtocounter{footnote}{-1}%
  \endgroup
}
\newcommand{\R}{\mathbb{R}}
\newcommand{\C}{\mathbb{C}}
\newcommand{\ve}{\varepsilon}
\newcommand{\lp}{\left(}
\newcommand{\rp}{\right)}
\newcommand{\wt}{\widetilde}
\newcommand{\wh}{\widehat}
\DeclareMathOperator{\Id}{I}
\newcommand{\supp}{\textnormal{supp\,}}
\newcommand{\inj}{\textnormal{inj}}
\newcommand{\D}{\mathcal{D}}
\newtheorem{thm}{Theorem}
\newtheorem{lem}{Lemma}
\newtheorem{prop}[lem]{Proposition}
\newtheorem{cor}[lem]{Corollary}
\newtheorem{rmk}[lem]{Remark}
\newcommand{\thmref}[1]{Theorem~\ref{#1}}
\newcommand{\lemref}[1]{Lemma~\ref{#1}}
\newcommand{\propref}[1]{Proposition~\ref{#1}}
\definecolor{bpurple}{RGB}{170,0,200}
\definecolor{bgreen}{RGB}{60,150,90}
\definecolor{bblue}{RGB}{50,70,200}
\numberwithin{equation}{section}
\numberwithin{lem}{section}
\begin{document}

\begin{abstract}
\footnotesize
In this paper, we study the two-point Weyl Law for the Laplace-Beltrami operator on a smooth, compact Riemannian manifold $M$ with no conjugate points. That is, we find the asymptotic behavior of the Schwartz kernel, $E_\lambda(x,y)$, of the projection operator from $L^2(M)$ onto the direct sum of eigenspaces with eigenvalue smaller than $\lambda^2$ as $\lambda \to\infty$. In the regime where $x,y$ are restricted to a compact neighborhood of the diagonal in $M\times M$, we obtain a uniform logarithmic improvement in the remainder of the asymptotic expansion for $E_\lambda$ and its derivatives of all orders, which generalizes a result of B\'erard, who treated the on-diagonal case $E_\lambda(x,x)$. When $x,y$ avoid a compact neighborhood of the diagonal, we obtain this same improvement in an upper bound for $E_\lambda$. Our results imply that the rescaled covariance kernel of a monochromatic random wave locally converges in the $C^\infty$ topology to a universal scaling limit at an inverse logarithmic rate.
\end{abstract}

\title[A logarithmic improvement on manifolds without conjugate points]{A logarithmic improvement in the two-point Weyl Law for manifolds without conjugate points}
\author{Blake Keeler}
\date{\today}
\maketitle

\nocite{Bonthonneau2017,Berard1977,Canzani2015,Canzani2018,ColindeVerdiere1973,DNPR2020,DuistermaatGuillemin1975,Hormander1968,HormanderBook1983,HormanderBook1985,SoggeBook2014,SoggeBook2017,SoggeZelditch2002,TrevesBook1975}







\blfootnote{To Appear in Annales de l'institut Fourier}

\small
\section{Introduction}
\label{introduction}
Let $(M,g)$ be a smooth, compact Riemannian manifold without boundary, and denote by $\Delta_g$ its positive definite Laplace-Beltrami operator. Let $\{\varphi_j\}_{j=0}^\infty$ be an orthonormal basis of $L^2(M)$ consisting of eigenfunctions of $\Delta_g$ with
\[
\Delta_g\varphi_j = \lambda_j^2\varphi_j,\hskip0.2in \|\varphi_j\|_{L^2(M)} = 1,
\]
 where $0=\lambda_0 < \lambda_1 \le \lambda_2 \le \dotsm$ are repeated according to multiplicity. We may, without loss of generality, take the $\varphi_j$ to be real-valued. We are interested in the Schwartz kernel of the spectral projection operator
\[
E_\lambda: L^2(M) \to \bigoplus\limits_{\lambda_j\le \lambda}\ker(\Delta_g - \lambda_j^2),
\]
which, in the above basis, takes the form
\[
E_\lambda(x,y) = \sum\limits_{\lambda_j\le \lambda}\varphi_j(x)\varphi_j(y)
\]
on $M\times M.$ This kernel is called the \textit{spectral function} of $\Delta_g.$ In this article, we investigate the two-point Weyl law for the spectral function, i.e. the asymptotic behavior of $E_\lambda(x,y)$ in the high-frequency limit $\lambda \to \infty.$ In the general case, the ``near-diagonal" behavior of $E_\lambda$ is known to be given by
\begin{equation}
\label{standardweyl}
E_\lambda(x,y) = \frac{\lambda^{n}}{(2\pi)^{n}}\int\limits_{B_x^*M} e^{i\lambda\langle\exp_x^{-1}(y),\xi\rangle_{g}}\frac{d\xi}{\sqrt{\det g_x}} + R_\lambda(x,y),
\end{equation}
where $B_x^*M$ is the unit ball in the cotangent space at $x$, and for any multi-indices $\alpha,\beta$,
\begin{equation}\label{weyl_remainder}
\sup\limits_{d_g(x,y) \le \ve} |\partial_x^\alpha\partial_y^\beta R_\lambda(x,y)|  = \mathcal O(\lambda^{n-1+|\alpha|+|\beta|}),
\end{equation}
as $\lambda\to\infty$ for some $\ve > 0$ sufficiently small. Here $d_g$ is the Riemannian distance function, $\exp_x^{-1}$ is the inverse of the exponential map defined on a sufficiently small neighborhood of $x$, and $g_x$ denotes the metric at $x$. We remark that for the purposes of this formula, we regard $\exp_x^{-1}(y)$ and $\xi$ as elements of $T_x^*M$, rather than $T_xM$ to be consistent with standard conventions in the literature. Throughout this article we will always interpret norms and inner products with the subscript $g$ as operations using the co-metric on $T^*M$, unless otherwise stated. 

A more general version of the above asymptotic was proved for the spectral functions of arbitrary positive elliptic pseudodifferential operators by H\"ormander in \cite{Hormander1968}, generalizing earlier results of Avakumovic \cite{Avakumovic1956} and Levitan \cite{Levitan1953,Levitan1955} for the on-diagonal behavior in the case of the Laplacian. We also remark that the original result was not stated to include derivatives of the remainder function, but as mentioned in \cite{Canzani2018}, \eqref{weyl_remainder} follows directly from the wave kernel method (e.g. \cite[\S4]{SoggeBook2017}, \cite{Xu2004}). Complementary to the near-diagonal result of H\"ormander, an estimate on $E_\lambda$ when $x$ and $y$ are ``far apart" was obtained by Safarov \cite{Safarov1988}, who showed that if $K$ is any compact set in $M\times M$ which does not intersect the diagonal with the property that if $x,y\in K$, then $x$ and $y$ are not mutually focal and at least one of $x$ or $y$ is not a focal point, then 
\begin{equation}\label{safarov_estimate}
\sup\limits_{x,y\in K}|E_\lambda(x,y)|= o(\lambda^n)
\end{equation}
as $\lambda \to\infty$. Safarov and Vassiliev also obtained some results on the precise form of the second term in the on-diagonal Weyl law, and we direct the reader to \cite{SafarovVassilievBook1997} for more information. In this article, we present improvements in both \eqref{weyl_remainder} and \eqref{safarov_estimate}, under the assumption that $(M,g)$ has no conjugate points. In the fully generic case, it is known that \eqref{weyl_remainder} is sharp, and this is easily shown by considering the zonal harmonics on the round sphere $\mathbb S^{n-1}$ centered at $x$ and restricting to $E_\lambda(x,x)$. However, by making assumptions about the behavior of the geodesic flow, one can often obtain improvements in the remainder estimate \eqref{weyl_remainder}. For example, Canzani and Hanin showed that if one assumes that $x_0\in M$ is non-self focal, i.e. the loopset given by $\{\xi\in S_{x_0}^*M :\, \exp_{x_0}(t\xi) = x_0\text{ for some }t > 0\}$ has Liouville measure zero in the co-sphere fiber $S_{x_0}^*M$, then one can locally improve \eqref{weyl_remainder} to 
\[\sup\limits_{x,y\in B(x_0,r_\lambda)}\left|\partial_x^\alpha\partial_y^\beta R_\lambda(x,y)\right| = o(\lambda^{n-1+|\alpha|+|\beta|})\]
as $\lambda\to\infty$, where $\lambda\mapsto r_\lambda$ is a real-valued function with $r_\lambda = o(1)$ as $\lambda \to\infty$, and $B(x_0,r_\lambda)$ is the geodesic ball of radius $r_\lambda$ centered at $x_0$ \cite{Canzani2015,Canzani2018}. This result was an extension of the work of Safarov \cite{Safarov1988}, who proved a pointwise $o(\lambda^{n-1})$ estimate for the on-diagonal remainder $R_\lambda(x,x)$ without derivatives. The same on-diagonal result was later proved independently by Sogge and Zelditch with an alternative proof \cite{SoggeZelditch2002}. This on-diagonal estimate was itself a generalization of the Duistermaat-Guillemin Theorem for the eigenvalue counting function \cite{DuistermaatGuillemin1975,Ivrii1980}. A more quantitative improvement in the Weyl law was obtained by B\'erard \cite{Berard1977}, who showed that under the stronger assumption of nonpositive curvature, one can obtain a factor of $\frac{1}{\log\lambda}$ in \eqref{weyl_remainder} when $x = y$ and $|\alpha| = |\beta| = 0$. This result was extended by Bonthonneau \cite{Bonthonneau2017} to apply to the case where $(M,g)$ has no conjugate points, and this was accomplished by proving that certain technical geometric estimates required in \cite{Berard1977} still hold in this more general setting. In this article, we generalize this logarithmic improvement by showing that it also holds in the more delicate off-diagonal case. We also show that adding derivatives in $x,y$ yields the expected change in the remainder bound, which enables us to obtain a quantitative rate of convergence for the rescaled covariance kernels of monochromatic random waves in the $C^\infty$ topology. This is the content of our main theorem, stated below.
\newpage
\begin{thm}\label{logthm}
Let $(M,g)$ be a smooth, compact Riemannian manifold without boundary, of dimension $n \ge 2$. Suppose that $(M,g)$ has no conjugate points. Then, for any multiindices $\alpha,\beta$, there exist positive constants $C_{\alpha,\beta}$ and $\lambda_0$ such that the remainder in the asymptotic expansion \eqref{standardweyl} satisfies 
\[
\sup\limits_{d_g(x,y)\le \frac{1}{2}\inj(M,g)} \left| \partial_x^\alpha\partial_y^\beta R_\lambda(x,y) \right| \le \frac{C_{\alpha,\beta}\lambda^{n-1+|\alpha|+|\beta|}}{\log\lambda}.
\]
for all $\lambda\ge \lambda_0$. 
\end{thm}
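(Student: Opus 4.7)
The plan is to adapt the wave-kernel/universal-cover parametrix strategy of B\'erard \cite{Berard1977}, as extended to the no-conjugate-points setting by Bonthonneau \cite{Bonthonneau2017}, to the off-diagonal regime and to derivatives of arbitrary order. Fix an even, real $\rho\in\mathcal{S}(\R)$ with $\hat\rho\in C_c^\infty(\R)$, $\supp\hat\rho\subset[-1,1]$, and $\hat\rho(0)=1$. For a scale $T=T(\lambda)$, to be taken of the form $T=c_0\log\lambda$, set $\rho_T(s):=T\rho(Ts)$, so that $\hat\rho_T(t)=\hat\rho(t/T)$ has support in $|t|\le T$. The central object is the smoothed spectral function
\begin{equation*}
F_\lambda(x,y):=\sum_j \rho_T(\lambda-\lambda_j)\vp_j(x)\vp_j(y)=\frac{1}{\pi}\int_\R \hat\rho(t/T)\cos(t\lambda)\,U(t,x,y)\,dt,
\end{equation*}
where $U(t,x,y):=\sum_j\cos(t\lambda_j)\vp_j(x)\vp_j(y)$ is the even part of the half-wave kernel. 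The goal is to show that $|\partial_x^\alpha\partial_y^\beta F_\lambda(x,y)|\le C_{\alpha,\beta}\lambda^{n-1+|\alpha|+|\beta|}$ uniformly over $T\le c_0\log\lambda$, and then apply a derivative-aware Tauberian step to convert this into the desired $\lambda^{n-1+|\alpha|+|\beta|}/\log\lambda$ bound on $R_\lambda$.

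To analyze $F_\lambda$, lift $x,y$ to points $\wt x,\wt y\in(\wt M,\wt g)$ and decompose $U(t,x,y)=\sum_{\gamma\in\Gamma}\wt U(t,\wt x,\gamma\wt y)$, where $\Gamma$ is the deck transformation group. The no-conjugate-points hypothesis ensures that $\exp_{\wt x}:T_{\wt x}\wt M\to\wt M$ is a global diffeomorphism, so the Hadamard parametrix for $\wt U(t,\wt x,\wt z)$ remains valid for all $(t,\wt z)$ with $|t|\le T$. Inserting the parametrix into $F_\lambda$ produces, for each $\gamma$, an oscillatory integral with phase
\begin{equation*}
\psi_\gamma(t,\xi)=t|\xi|_{\wt g}+\langle\exp_{\wt x}^{-1}(\gamma\wt y),\xi\rangle_{\wt g},
\end{equation*}
whose critical points in $t$ lie on $|t|=d_{\wt g}(\wt x,\gamma\wt y)$. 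The $\gamma=e$ term, after stationary phase in $t$, reproduces precisely $\partial_\lambda$ of the main term in \eqref{standardweyl} modulo an $O(\lambda^{-\infty})$ error in the region $d_g(x,y)\le\frac{1}{2}\inj(M,g)$. For $\gamma\ne e$, stationary phase in $t$ yields the localization factor $\hat\rho(d_{\wt g}(\wt x,\gamma\wt y)/T)$, restricting the sum to $\{\gamma:d_{\wt g}(\wt x,\gamma\wt y)\le T\}$; the orbit count and the parametrix amplitude on this set are then controlled by Bonthonneau's quantitative volume-comparison and Jacobi-field estimates \cite{Bonthonneau2017}, which apply on the universal cover of any compact manifold without conjugate points. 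Taking $c_0$ small enough that the resulting subexponential factor $e^{hT}=\lambda^{hc_0}$ is dominated by the stationary phase gain produces the uniform bound on $\partial_x^\alpha\partial_y^\beta F_\lambda(x,y)$.

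The final step converts the smoothed bound into the sharp bound on $R_\lambda$. In the on-diagonal case this is the classical H\"ormander--Sogge Tauberian theorem, which relies on the monotonicity of $E_\lambda(x,x)$; off-diagonal, $E_\lambda(x,y)$ is signed, so one instead applies the modified Tauberian approach of \cite{Canzani2018}, using the Hadamard parametrix comparison to bypass monotonicity. The main obstacle I anticipate is the uniform propagation of derivatives in both $x$ and $y$ through the universal-cover sum: each derivative lands on the parametrix amplitude, on $\exp_{\wt x}^{-1}(\gamma\wt y)$, and on the large-parameter phase $e^{i\lambda\langle\exp_{\wt x}^{-1}(\gamma\wt y),\xi\rangle_{\wt g}}$, potentially producing factors that grow with $T$ through the $C^k$ norms of the exponential map on balls of radius $T$ in $\wt M$. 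These norms must be bounded via Jacobi-field estimates, which grow at most subexponentially in $T$ on a manifold without conjugate points; with $T=c_0\log\lambda$ this growth is polynomial in $\lambda$ and can be absorbed into the factor $\lambda^{|\alpha|+|\beta|}$. Carefully bookkeeping these derivative losses through each step, and establishing them uniformly over the off-diagonal region $d_g(x,y)\le\frac{1}{2}\inj(M,g)$, is the delicate new input beyond the on-diagonal B\'erard--Bonthonneau arguments.
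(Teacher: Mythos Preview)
Your strategy coincides with the paper's: smooth at time scale $T\sim\log\lambda$, lift the wave kernel to the universal cover, expand via the Hadamard parametrix, separate $\gamma=e$ from $\gamma\ne e$, and control derivatives through Jacobi-field estimates. Two overclaims, however, hide the genuinely new off-diagonal work.

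First, the $\gamma=e$ contribution does \emph{not} reproduce $\partial_\lambda$ of the main term in \eqref{standardweyl} modulo $O(\lambda^{-\infty})$. The leading Hadamard coefficient is $u_0=\Theta^{-1/2}$, which equals $1$ only on the diagonal; off-diagonal one has $\Theta^{-1/2}(x,y)=1+d_g(x,y)^2f(x,y)$, and the extra piece contributes at order $\lambda^{n-1+|\alpha|+|\beta|}/\log\lambda$ --- exactly the size of the claimed remainder. Showing this requires an integration by parts in $\xi$ to convert the $d_g(x,y)^2$ into a derivative, followed by a case split on whether $d_g(x,y)\lessgtr 1/\log\lambda$ (the paper's Lemma~\ref{leading_term_lambda}). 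This is precisely the step that has no analogue in B\'erard's on-diagonal argument and is where the proof is most delicate; your outline skips it entirely. Second, the Jacobi-field and $\exp_{\wt x}^{-1}$ bounds on $\wt M$ are \emph{exponential} in $T$, not subexponential, so with $T=c_0\log\lambda$ they become $\lambda^{Cc_0}$. These factors cannot be absorbed into $\lambda^{|\alpha|+|\beta|}$, which is already consumed by derivatives landing on the phase; they are instead paid for by the stationary-phase gain $(1+\lambda d_{\wt g}(\wt x,\gamma\wt y))^{-(n-1)/2}$, available only because $d_{\wt g}(\wt x,\gamma\wt y)\ge\tfrac12\inj(M,g)$ when $\gamma\ne e$, together with taking $c_0$ small (Lemma~\ref{off_diagonal}). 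Finally, your Tauberian step will require as input the derivative-aware on-diagonal cluster bound $\sum_{\lambda_j\in(\lambda,\lambda+1/\log\lambda]}|\partial_x^\alpha\vp_j(x)|^2\lesssim\lambda^{n-1+2|\alpha|}/\log\lambda$, which is not in Canzani--Hanin and which the paper proves separately (Lemma~\ref{spectral_cluster_log}).
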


An outline of the proof of \thmref{logthm} is given in Subsection \ref{outline}. By modifying the proof slightly, we also obtain an improved upper bound on derivatives of $E_\lambda$ itself when $x,y$ are bounded away from each other, in analogy to Safarov's estimate \eqref{safarov_estimate} from \cite{Safarov1988}.

\begin{thm}\label{logthm_2}
For $(M,g)$ as in \thmref{logthm} and any $\ve >0$, there exist constants $C_{\alpha,\beta,\ve},\lambda_0 > 0$ such that
\begin{equation}\label{E_lambda_bound}
\sup\limits_{d_g(x,y) \ge \ve} \left|\partial_x^\alpha\partial_y^\beta E_\lambda(x,y)\right|\le \frac{C_{\alpha,\beta,\ve}\lambda^{n-1 + |\alpha|+|\beta|}}{\log\lambda}
\end{equation}
for all $\lambda\ge \lambda_0$.
\end{thm}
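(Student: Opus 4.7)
The plan is to adapt the wave-kernel / Fourier-cutoff machinery developed for \thmref{logthm} to the off-diagonal regime. Two features make the argument for \thmref{logthm_2} somewhat easier than its near-diagonal counterpart: we need only an upper bound for $E_\lambda$ (not a sharp comparison with an explicit leading Weyl term), and the hypothesis $d_g(x,y)\ge\ve$ allows finite propagation speed to annihilate the small-$t$ portion of the half-wave kernel that is responsible for the singular principal contribution near the diagonal.

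Concretely, I would select an even Schwartz function $\rho$ with $\hat\rho\in C_c^\infty([-1,1])$ and $\hat\rho\equiv 1$ near $0$, and set $\rho_T(s)=T\rho(Ts)$ with $T=c\log\lambda$, the constant $c>0$ to be fixed small. Writing the half-wave kernel as
\[
U(t,x,y) = \sum_j e^{-it\lambda_j}\varphi_j(x)\varphi_j(y),
\]
the smoothed spectral derivative admits the representation
\[
(\rho_T * dE_\lambda)(x,y) = \frac{1}{2\pi}\int e^{it\lambda}\,\hat\rho(t/T)\,U(t,x,y)\,dt.
\]
Since $U(t,x,y)\equiv 0$ for $|t|<d_g(x,y)$, the effective integration range collapses to the annulus $\ve\le|t|\le T$. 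On this annulus I would lift to the universal cover $(\wt M,\wt g)$ and expand $U(t,x,y)=\sum_{\gamma\in\pi_1(M)}\wt U(t,\wt x,\gamma\cdot\wt y)$; only finitely many $\gamma$ contribute for each fixed $t$. The no-conjugate-points hypothesis ensures that the Hadamard parametrix for $\wt U$ is valid globally on $\wt M$ up to time $T$, and the number of translates with $d_{\wt g}(\wt x,\gamma\cdot\wt y)\le T$ grows at most like $e^{hT}\le\lambda^{hc}$, harmlessly polynomial for $c$ sufficiently small.

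The core intermediate estimate is
\[
\bigl|\partial_x^\alpha\partial_y^\beta(\rho_T * dE_\lambda)(x,y)\bigr| \le C_{\alpha,\beta,\ve}\,\lambda^{n-1+|\alpha|+|\beta|},
\]
obtained by applying stationary-phase / non-stationary-phase analysis to each parametrix contribution on the annulus and summing over $\gamma$. Once this is in hand, a Tauberian-type comparison with a second cutoff of width $T^{-1}$ converts the smoothed bound into the desired estimate for $E_\lambda$ itself, and it is precisely this step that contributes the decisive $(\log\lambda)^{-1}$ factor.

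The main technical obstacles I anticipate are twofold. First, the derivative bounds on the Hadamard parametrix must be uniform across the compact off-diagonal region $\{d_g(x,y)\ge\ve\}$, and one must verify that each $\partial_x$ or $\partial_y$ contributes only the expected factor of $\lambda$ even after summation over $\pi_1(M)$. Second, the Tauberian conversion is more delicate than in the on-diagonal setting because $E_\lambda(x,y)$ is not monotone in $\lambda$ when $x\ne y$; I would handle the unsmoothed-minus-smoothed difference by a second Fourier-side argument using the same parametrix, so that the $(\log\lambda)^{-1}$ gain persists through this step. Both obstacles mirror their near-diagonal analogues in \thmref{logthm}, and I expect the uniformity required in the off-diagonal region to follow from the same inputs with only notational adjustments.
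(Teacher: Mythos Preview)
Your overall strategy is close to the paper's and would work, but two points deserve correction.

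First, a technical slip: the half-wave kernel $U(t,x,y)=\sum_j e^{-it\lambda_j}\varphi_j(x)\varphi_j(y)$ does \emph{not} vanish for $|t|<d_g(x,y)$; finite propagation speed in the strict sense holds for $\cos(t\sqrt{\Delta_g})$ and $\frac{\sin(t\sqrt{\Delta_g})}{\sqrt{\Delta_g}}$, whereas $e^{-it\sqrt{\Delta_g}}$ is only \emph{smoothing} off the light cone. This is harmless for your purposes (the small-$t$ piece contributes $O(\lambda^{-\infty})$ after integration against $e^{it\lambda}$), but you should either switch to the cosine propagator as the paper does or rephrase the claim accordingly.

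Second, and more substantively, your Tauberian step is where the proposal is vague and where the paper's route differs. You suggest controlling the unsmoothed-minus-smoothed difference by ``a second Fourier-side argument using the same parametrix,'' but the Fourier symbol of that difference is $\tfrac{\sin t\lambda}{t}(1-\widehat\rho(At))$, which is not compactly supported in $t$, so the parametrix---valid only up to times of order $\log\lambda$---cannot handle it directly. The paper sidesteps the non-monotonicity issue altogether: Proposition~\ref{exact_kernel_remainder} bounds
\[
E_\lambda(x,y)-\frac{1}{\pi}\int_{-\infty}^\infty \widehat\rho(At)\frac{\sin t\lambda}{t}K(t,x,y)\,dt
=\sum_j h_{\lambda,A}(\lambda_j)\varphi_j(x)\varphi_j(y)
\]
uniformly over \emph{all} $x,y\in M$ by applying Cauchy--Schwarz termwise and invoking the on-diagonal cluster estimate of Lemma~\ref{spectral_cluster_log}. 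This is where the $(\log\lambda)^{-1}$ actually comes from. With that comparison already in hand from the proof of Theorem~\ref{logthm}, the off-diagonal case reduces to observing (Remark~\ref{safarov_remark}) that when $d_g(x,y)\ge\varepsilon$ every lifted distance $d_{\widetilde g}(\widetilde x,\gamma\widetilde y)$ is bounded below, so Lemma~\ref{off_diagonal} applies to \emph{every} parametrix term and yields a bound of order $\lambda^{\frac{n-1}{2}+|\alpha|+|\beta|+\delta}$ on the smoothed integral---much smaller than the target. In short: replace your proposed second parametrix pass by the Cauchy--Schwarz reduction to the diagonal, and the argument goes through.
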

The proof of \thmref{logthm_2} is largely contained within that of \thmref{logthm}, and the necessary modifications are discussed in Remark \ref{safarov_remark}.

A straightforward consequence of \thmref{logthm} is an asymptotic for the spectral cluster kernels defined by
\[
E_{_{\!(\lambda,\lambda+1]}}(x,y) = \sum\limits_{\lambda_j\in(\lambda,\lambda+1]}\varphi_j(x)\varphi_j(y),
\]
for $x,y\in M$.
In Section \ref{cluster_thm_proof}, we show that using polar coordinates and the fact that
\[
\int\limits_{\mathbb S^{n-1}}e^{i\langle w,\sigma\rangle}\,d\sigma = (2\pi)^{\frac{n}{2}}\frac{J_{\frac{n-2}{2}}(|w|)}{|w|^{\frac{n-2}{2}}},
\]
where $J_\nu$ denotes the Bessel function of the first kind of order $\nu$ and $d\sigma$ is the standard surface measure on $\mathbb S^{n-1}$, one obtains the following consequence.
 
\begin{thm}\label{spectral_cluster_thm}
For $(M,g)$ as in Theorem \ref{logthm} and for any multi-indices $\alpha,\beta$, there exist constants $C_{\alpha,\beta},\lambda_0 > 0$ such that for any $x,y\in M$ with $d_g(x,y)\le \frac{1}{2}\inj(M,g)$,
\[
\left|\partial_x^\alpha\partial_y^\beta\lp E_{_{\!(\lambda,\lambda+1]}}(x,y) - \frac{\lambda^{n-1}}{(2\pi)^\frac{n}{2}}\frac{J_{\frac{n-2}{2}}(\lambda d_g(x,y))}{(\lambda d_g(x,y))^{\frac{n-2}{2}}} \rp\right| \le \frac{C_{\alpha,\beta}\lambda^{n-1+|\alpha|+|\beta|}}{\log\lambda}
\]
whenever $\lambda \ge \lambda_0.$
\end{thm}

We note that Theorem \ref{spectral_cluster_thm} only gives the leading order behavior of $E_{_{\!(\lambda,\lambda+1]}}(x,y)$ when $d_g(x,y)$ is very small relative to $\frac{1}{\lambda}$. To illustrate this, let us take the case where $|\alpha| = |\beta| = 0$. By standard properties of Bessel functions, we have that
\[\left|\lambda^{n-1}\frac{J_{\frac{n-2}{2}}(\lambda d_g(x,y))}{(\lambda d_g(x,y))^{\frac{n-2}{2}}}\right|\le C\lambda^{n-1}(1 + \lambda d_g(x,y))^{-\frac{n-1}{2}}.\]
Hence, if $d_g(x,y) \ge \frac{(\log\lambda)^{\frac{2}{n-1}}}{\lambda}$, then 
\[\lambda^{n-1}(1 + \lambda d_g(x,y))^{-\frac{n-1}{2}} \le \lambda^{n-1}\lp1 + (\log\lambda)^\frac{2}{n-1}\rp^{-\frac{n-1}{2}} = \mathcal O\lp\frac{\lambda^{n-1}}{\log\lambda}\rp.\]
Thus, if $d_g(x,y)$ is too large relative to $\frac{1}{\lambda}$, \thmref{spectral_cluster_thm} simply gives the same upper bound on $E_{_{\!(\lambda,\lambda+1]}}(x,y)$ that one would obtain by applying \thmref{logthm_2} and Cauchy-Schwarz. A similar argument shows that \thmref{logthm} only gives the leading behavior when $d_g(x,y)$ is smaller than $\mathcal O\lp\lambda^{\frac{2}{n-1}-1}(\log\lambda)^{\frac{2}{n-1}}\rp.$

Off-diagonal cluster estimates such as Theorem \ref{spectral_cluster_thm} have applications in the study of monochromatic random waves, which are random fields of the form
\[
\psi_\lambda(x) = \lambda^{\frac{1-n}{2}}\hspace{-1.2em}\sum\limits_{\lambda_j\in(\lambda,\lambda+1]}\!\!\!\!a_j\varphi_j(x),
\]
for $x\in M,$ where the $a_j$ are i.i.d. standard Gaussian random variables with mean 0 and variance 1. Random waves of this form were first introduced on Riemannian manifolds in \cite{Zelditch2009} by Zelditch, who was motivated by Berry's conjecture, which suggests that on manifolds with chaotic dynamics, high-frequency eigenfunctions should behave like certain stationary Gaussian fields in Euclidean space (c.f. \cite{Berry1977,IngremeauRivera2020}). 

By the Kolmogorov extension theorem, the statistics of monochromatic random waves are completely characterized by their covariance kernels, or two-point correlation functions, which can be computed directly as
\[
\text{Cov}(\psi_\lambda(x),\psi_\lambda(y)) = \lambda^{1-n}E_{_{\!(\lambda,\lambda+1]}}(x,y).
\]
for $x,y\in M.$ Theorem \ref{spectral_cluster_thm} implies that for any $x_0\in M$, we have the following convergence result for the covariance kernel in rescaled normal coordinates.
 
\begin{cor}\label{normal_coords} Let $(M,g)$ be as in Theorem \ref{logthm}, fix $x_0\in M$, and let $\lambda\mapsto r_\lambda$ be a real-valued function such that $ r_\lambda =\mathcal O\lp\sqrt{\frac{\lambda}{\log\lambda}}\rp$ as $\lambda \to\infty$. Then, for all $\alpha,\beta$,
\[
\textnormal{Cov}\lp\psi_\lambda\lp\exp_{x_0}(\tfrac{u}{\lambda})\rp\!,\!\psi_\lambda\lp\exp_{x_0}(\tfrac{v}{\lambda})\rp\rp = \frac{J_{\frac{n-2}{2}}(|u-v|)}{(2\pi)^{\frac{n}{2}}|u-v|^{\frac{n-2}{2}}} + R(u,v,\lambda),
\]
where
\[\sup\limits_{|u|,|v|\le r_\lambda}|\partial_u^\alpha\partial_v^\beta R(u,v,\lambda)| = \mathcal O\lp\frac{1}{\log\lambda}\rp,\]
as $\lambda\to\infty,$ and we consider $u,v$ as elements of $\R^n\cong T_{x_0}^*M$ when taking the supremum. 

\end{cor}
 
 Here the implicit constant depends on the choices of $x_0$ and $ r_\lambda$, and on the order of differentiation. Note that although the radius $ r_\lambda$  gives a growing ball in the $u,v$ coordinates, this corresponds to a shrinking ball of radius $\frac{ r_\lambda}{\lambda} = \mathcal O\lp\frac{1}{\sqrt{\lambda\log\lambda}}\rp$ on $M$, and, as $\lambda \to \infty$, this is indeed smaller than $\frac{1}{2}\inj(M,g)$ as required by \thmref{spectral_cluster_thm}. One can prove this corllary by Taylor expanding the function $F(\tau) = \frac{J_\nu(\tau)}{\tau^\nu}$, with $\nu = \frac{n-2}{2}$, around $\tau = 0$ and using that $d_g(x,y) - \frac{|u-v|}{\lambda} = \mathcal O\lp\frac{|u-v|^2}{\lambda^2}\rp.$
  Here, $x = \exp_{x_0}(u/\lambda)$ and $y = \exp_{x_0}(v/\lambda)$. In doing this Taylor expansion, we find that if $|u-v|^2 \le\mathcal O\lp\frac{\lambda}{\log\lambda}\rp$, then the error is smaller than the proposed $\mathcal O\lp\frac{1}{\log\lambda}\rp$ bound, which determines our condition on $ r_\lambda,$ although we do not claim that this is the largest possible radius for which the result holds. Corollary \ref{normal_coords} shows that the rescaled covariance kernel of a monochromatic random wave locally converges to that of a Euclidean random wave of frequency 1 at a rate of $\frac{1}{\log\lambda}$ in the $C^\infty$-topology, and hence the limit is universal in that it depends only on the dimension $n$, not on $M$ itself. As an interesting application, we note that a recent work of Dierickx, Nourdin, Peccati, and Rossi utilizes the quantitative rate of convergence given in Corollary \ref{normal_coords} in the proof of a small-scale central limit theorem for the nodal lengths of monochromatic random waves on surfaces without conjugate points \cite[Theorem 1.5]{DNPR2020}.

 Under the assumption that $x_0$ is a non self-focal point, Canzani and Hanin proved $o(1)$ convergence in the $C^0$-topology in \cite{Canzani2015}, and then in the $C^\infty$ topology in \cite{Canzani2018}. However, without any further restrictions on the geometry, they were unable to obtain an explicit rate of convergence as $\lambda \to \infty.$ Our $\frac{1}{\log\lambda}$ estimate is a first step toward obtaining quantitative asymptotic improvements on the statistics of monochromatic random waves in the fairly generic setting of manifolds without conjugate points.

\subsection{Outline of the Proof of Theorem \ref{logthm}}\label{outline}
We first relate the spectral function $E_\lambda(x,y)$ to the Schwartz kernel $K(t,x,y)$ of the wave operator $\cos (t\sqrt\Delta_g)$ using the Fourier transform taking $\lambda\mapsto t$, along with an on-diagonal spectral cluster estimate. We are able to use on-diagonal results here because we only need upper bounds on the spectral clusters in this piece of the argument. This is done in Section \ref{spectralfunction}, although the proof of the relevant spectral cluster estimate is postponed to Appendix \ref{cluster_appendix}, since the proof technique is largely a repetition of arguments from Section \ref{asymptotics}.

The second step is to approximate $K(t,x,y)$ using the Hadamard parametrix, which is done in Section \ref{hadamard}. The fact that $(M,g)$ has no conjugate points allows us to lift to the universal cover $(\wt M,\wt g)$, which is diffeomorphic to $\R^n$ by the Cartan-Hadamard theorem. We induce a parametrix on the base manifold by projecting, i.e. by summing over the deck transformation group $\Gamma$, which results in an expansion of the form 
\begin{equation}\label{parametrix_general_form}
K(t,x,y) = \sum\limits_{\nu = 0}^\infty \sum\limits_{\gamma\in \Gamma} F_\nu(t,\wt x,\gamma\wt y) \hskip 0.2in \mod C^\infty,
\end{equation}
where $\wt x,\wt y$ are some chosen lifts of $x,y$, and where each $F_\nu$ is the product of a $C^\infty$ function and a homogeneous distribution of order $2\nu - n$. We do not reproduce the construction of the parametrix, since it has been done in great detail in other sources (e.g. \cite{Berard1977,HormanderBook1985,SoggeBook2014}). Instead we focus on identifying the structure of the distributions which comprise the parametrix and on proving that the error introduced by approximating $K(t,x,y)$ by a partial sum in \eqref{parametrix_general_form} is sufficiently small. 

Once we have reduced the proof of Theorem \ref{logthm} to estimating an integral involving the parametrix, we perform some explicit asymptotic analysis on the individual terms as $\lambda \to \infty$. This is the content of Section \ref{asymptotics}. It is here that our techniques make the most significant departure from the work of B\'erard \cite{Berard1977}, where $R_\lambda(x,x)$ is estimated. In \cite{Berard1977}, the leading order behavior is obtained from the term in the parametrix corresponding to $\gamma = \Id$, and so $d_{\wt g}(\wt x,\wt x) = 0$. This reduces the relevant oscillatory integrals to a very simple form. In our case, a notable difficulty is that $d_{\wt g}(\wt x, \wt y)$ may be quite small, but need not be exactly zero, and so the corresponding singularities of the parametrix at $t = \pm d_{\wt g}(\wt x,\wt y)$ are very close together, but do not necessarily coincide. We still obtain the leading order behavior when $\wt x$ and $\wt y$ are the closest possible lifts of $x,y$, which we may assume occurs when $\gamma = \Id,$ but we do not get the same simplifications as in \cite{Berard1977} if the distance between them is nonzero.  This requires us to use a very different formulation of the parametrix terms $F_\nu$, so that we can track the dependence on this distance, which yields a more complicated linear combination of oscillatory integrals to estimate. We obtain somewhat weaker control on these terms, but the bounds are all smaller than the claimed estimate in \thmref{logthm}, and so the final result still holds. For the case where $\gamma \ne \Id$, our proof hinges on the fact that $d_{\wt g}(\wt x,\gamma\wt y)$ is bounded uniformly away from zero, thus allowing for improved estimates from applying stationary phase. 

\subsection{Organization of the Paper}
Sections \ref{spectralfunction}, \ref{hadamard}, and \ref{asymptotics} are devoted to the proof of Theorem~\ref{logthm}. \thmref{logthm_2} follows from the same techniques, as discussed in Remark \ref{safarov_remark}. Then, in Section \ref{cluster_thm_proof}, we prove that Theorem \ref{logthm} implies Theorem~\ref{spectral_cluster_thm}. 

 Appendix \ref{appendix} contains an estimate on summations involving factors which localize the summand to a $\lambda$-dependent region. This estimate is used in the proof of \propref{exact_kernel_remainder}, but the method of its proof is not particularly instructive, and so we relegate it to an Appendix. Appendix \ref{geometric_estimates} contains the proofs of some technical differential geometry results regarding quantities appearing in the construction of the parametrix, which are essential for including derivatives in the main result. We rely heavily on Jacobi field techniques similar to those contained in \cite[\S 3]{Blair2018}. Finally, in Appendix \ref{cluster_appendix} we prove the on-diagonal spectral cluster estimate used in Section \ref{spectralfunction}. The main components of the proof are extremely similar to arguments presented in Section \ref{asymptotics}, so we simply sketch the key points.

\subsection{Acknowledgments}
First and foremost, the author would like to thank his thesis advisor Y. Canzani for providing the inspiration for this project and for giving detailed feedback on several drafts of the article. The author is also grateful to J. Marzuola, J. Metcalfe, M. Taylor, and M. Williams for providing insight on various details throughout the course of this project. It is also a pleasure to thank G. Peccati and M. Rossi for some very interesting discussions regarding the applications of this work to monochromatic random waves. The author would also like to thank Y. Bonthonneau for some private communications which clarified a few details about the extension of B\'erard's original estimate to the case of manifolds without conjugate points. The author would like to thank both M. Blair and C. Sogge for their comments regarding the addition of derivatives to \thmref{logthm}. In particular, M. Blair had some insightful suggestions regarding the variation through geodesics argument in the proof of \lemref{u_nu_bound}. Finally, the author is tremendously grateful to the referee who reviewed the first version of this paper for providing detailed and helpful feedback, most notably a sketch of the proof of \lemref{exp_map}, which was a key component in adding derivatives to the main result. 




\section{The Spectral Function and the Wave Kernel}
\label{spectralfunction}
Since the spectral function $E_\lambda(x,y)$ is difficult to work with directly, we instead study its behavior by relating it to the kernel of $\cos( t\sqrt\Delta_g)$ via the Fourier transform, following techniques similar to those found in \cite{SoggeBook2014}. To accomplish this, let us note that 
\[
E_\lambda(x,y)  = \sum\limits_{j = 0}^\infty \mathds{1}_{[-\lambda,\lambda]}(\lambda_j)\varphi_j(x) {\varphi_j(y)},
\]
where $\mathds{1}_{[-\lambda,\lambda]}$ denotes the characteristic function of the interval $[-\lambda,\lambda]$. Since this characteristic function has Fourier transform $\int_{-\lambda}^\lambda e^{-it\tau}\mathds \,d\tau = \frac{2\sin (t\lambda)}{t}$, which is even, we can write
\begin{equation}\label{E_lambda_nocutoff}
E_\lambda(x,y) = \sum\limits_{j = 0}^\infty\frac{1}{\pi}\int\limits_{-\infty}^\infty\frac{\sin (t\lambda)}{t} \cos (t\lambda_j)\,\varphi_j(x) {\varphi_j(y)}\,dt,
\end{equation} 
where we can interpret the above integral as $\lim\limits_{N\to\infty}\frac{1}{\pi}\int_{-\infty}^\infty \beta(t/N)\frac{\sin( t\lambda)}{t}\cos (t\lambda_j)\,dt$ for any even function $\beta\in C_c^\infty(\R)$ with $\beta(0) = 1.$ This interpretation technically requires that $\lambda^2$ does not belong to the spectrum of $\Delta_g$, since
\[\lim\limits_{N\to\infty}\int\limits_{-\infty}^\infty\beta(t/N)\frac{\sin( t\lambda)}{t}\cos( t\lambda)\,dt = \frac{1}{2},\]
if $\beta$ is even, and so the limit does not actually recover $\mathds 1_{[-\lambda,\lambda]}(\lambda)$ (c.f. \cite{SoggeBook2014}). Thus, we will assume throughout the rest of this argument that $\lambda^2$ is not an eigenvalue. To justify this assumption, let us define the spectral cluster operator $E_{(\lambda,\lambda + A]}$ for $0 < A\le 1$ to be the orthogonal projection
\[
E_{_{\!(\lambda,\lambda+A]}}:L^2(M)\to \!\!\!\bigoplus\limits_{\lambda_j\in(\lambda,\lambda+A]} \!\!\!\ker (\Delta_g - \lambda_j^2)
\]
and so the corresponding Schwartz kernel is 
\begin{equation}\label{spectral_cluster_kernel}
E_{_{\!(\lambda,\lambda+A]}}(x,y) = \!\!\!\sum\limits_{\lambda_j\in(\lambda,\lambda+A]}\!\!\!\varphi_j(x) {\varphi_j(y)}.
\end{equation}
We then have the following estimate on derivatives of $E_{(\lambda,\lambda+A]}$ restricted to the diagonal, which is a generalization of results from \cite{Berard1977,SoggeBook2014}.
 
\begin{lem}
\label{spectral_cluster_log}
Let $(M,g)$ be as in Theorem \ref{logthm}. Then there are constants $\lambda_0,C_1,C_2>0$ such that 
\[
\sup\limits_{x\in M}\left|\partial_x^\alpha\partial_y^\alpha E_{_{\!(\lambda,\lambda+A]}}(x,y)\big|_{x=y}\right| \le C_1\lambda^{2|\alpha|}\left[ A\lambda^{n-1} + e^{C_2/A}A\max\{\lambda^{\frac{n-1}{2}},\lambda^{n-3}\}\right]
\]
for all $\lambda \ge \lambda_0$ and all $0 < A\le 1.$ In particular, if $A = \frac{1}{c\log\lambda}$ with $c > 0$ sufficiently small, then after possibly increasing $\lambda_0,$ we have
\[
\sup\limits_{x\in M}\left|\partial_x^\alpha\partial_y^\alpha E_{_{\!(\lambda,\lambda+A]}}(x,y)\big|_{y=x}\right| \le C \frac{\lambda^{n-1+2|\alpha|}}{\log\lambda}
\]
for all $\lambda \ge \lambda_0$ and for some $C > 0$.
\end{lem}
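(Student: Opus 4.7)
The plan is to adapt the standard Fourier--Tauberian wave-kernel approach to this cluster-with-derivatives setting, extending the time of propagation to $T = 1/A$ so that the universal cover estimates available on $(M,g)$ without conjugate points can contribute. First I would fix a test function $\rho \in \mathcal S(\R)$ satisfying $\rho \ge 0$ everywhere, $\rho \ge c_0 > 0$ on $[-1,0]$, and $\hat\rho \in C_c^\infty(\R)$ even with $\supp \hat\rho \subset [-R_0,R_0]$ for some $R_0 > 0$; such a $\rho$ exists, e.g., as $\rho = |\psi|^2$ for a suitable $\psi$. Setting $T = 1/A$, the identity $\partial_x^\alpha\partial_y^\alpha\lp\vp_j(x)\vp_j(y)\rp\big|_{y=x} = (\partial^\alpha \vp_j(x))^2 \ge 0$ combined with $\rho(T(\lambda-\lambda_j)) \ge c_0$ for $\lambda_j \in (\lambda,\lambda+A]$ gives the positivity bound
\[
\partial_x^\alpha\partial_y^\alpha E_{(\lambda,\lambda+A]}(x,y)\big|_{y=x} \le c_0^{-1}\,\partial_x^\alpha\partial_y^\alpha \rho(T(\lambda-\sqrt{\Delta_g}))(x,y)\big|_{y=x}.
\]
Fourier inversion and the evenness of $\hat\rho$ convert the right-hand side to $\frac{1}{\pi T}\int \hat\rho(t/T)\cos(t\lambda)\,\partial_x^\alpha\partial_y^\alpha \cos(t\sqrt{\Delta_g})(x,y)|_{y=x}\,dt$, and I would split this integral into $|t| \le \delta$ and $\delta < |t| \le TR_0$ with $\delta < \tfrac{1}{2}\inj(M,g)$ fixed.

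On $|t| \le \delta$ I would substitute the H\"ormander short-time parametrix for $\cos(t\sqrt{\Delta_g})(x,y)$, whose phase is linear in $\xi$ at $x = y$, namely $\langle \exp_x^{-1}(y),\xi\rangle_g$. Each of the $2|\alpha|$ derivatives then brings down a factor of $|\xi|_g$; the $t$-integral produces the Schwartz bump $\rho(T(\lambda - |\xi|_g)) + \rho(T(\lambda + |\xi|_g))$, which localizes $|\xi|_g$ to a window of width $A$ around $\lambda$, whose subsequent $\xi$-integral in polar coordinates contributes $A\lambda^{n-1}$ from the spherical measure. This yields the main contribution $C A \lambda^{n-1+2|\alpha|}$.

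For the long-time piece $\delta < |t| \le TR_0$ I would lift to the universal cover $(\wt M,\wt g)$, which is diffeomorphic to $\R^n$ since $(M,g)$ has no conjugate points, and use the Hadamard parametrix expansion \eqref{parametrix_general_form} summed over the deck transformation group $\Gamma$ as set up in Section \ref{hadamard}. Finite propagation speed restricts the sum to $\gamma \in \Gamma$ with $d_{\wt g}(\wt x,\gamma \wt x) \le TR_0$, and Jacobi-field volume comparison on $\wt M$ (available since $\exp_{\wt x}$ is a global diffeomorphism) gives $\#\{\gamma : d_{\wt g}(\wt x,\gamma \wt x) \le r\} \le e^{C_2 r}$, hence at most $e^{C_2 T} = e^{C_2/A}$ nonzero contributions. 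For each nontrivial $\gamma$ the lifted distance $d_{\wt g}(\wt x,\gamma \wt x)$ is bounded below by a positive constant depending on $M$, so the parametrix singularities at $t = \pm d_{\wt g}(\wt x,\gamma \wt x)$ avoid the region of rapid cancellation; stationary phase in $t$ applied to the resulting oscillatory integrals, with the $2|\alpha|$ derivatives contributing a $\lambda^{2|\alpha|}$ factor through the leading behavior of the parametrix symbols, produces the per-term bound $\lambda^{2|\alpha|}\max\{\lambda^{(n-1)/2},\lambda^{n-3}\}$. Multiplying by the count $e^{C_2/A}$ and the prefactor $A = 1/T$ yields the second term in the claim.

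Combining the two pieces gives the first bound. The second bound follows by taking $A = 1/(c\log\lambda)$, so that $e^{C_2/A} = \lambda^{C_2 c}$, and then choosing $c > 0$ small enough that $\lambda^{C_2 c}\max\{\lambda^{(n-1)/2},\lambda^{n-3}\} \le \lambda^{n-1}$ for all sufficiently large $\lambda$; this forces the remainder to be absorbed into $A\lambda^{n-1+2|\alpha|} = \lambda^{n-1+2|\alpha|}/(c\log\lambda)$, yielding the claimed $C \lambda^{n-1+2|\alpha|}/\log\lambda$. The principal obstacle is the careful stationary-phase analysis of each Hadamard parametrix term once the $2|\alpha|$ derivatives are distributed between phase and amplitude, which is essentially the same bookkeeping performed in Section \ref{asymptotics} for \thmref{logthm}---this is precisely why the full proof of this lemma is relegated to Appendix \ref{cluster_appendix}.
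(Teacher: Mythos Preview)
Your proposal is correct and follows essentially the same approach as the paper: dominate the cluster sum by a nonnegative Schwartz function of $(\lambda-\lambda_j)/A$, convert via Fourier inversion to an integral against the cosine kernel, replace by the Hadamard parametrix on the universal cover, and invoke the stationary-phase bookkeeping of Section~\ref{asymptotics}. The only cosmetic difference is that the paper does not split into short and long times but uses the lifted parametrix for all $t$ and organizes the estimates by $\gamma = Id$ versus $\gamma \ne Id$ and by the index $\nu$, which amounts to the same decomposition you describe.
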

In the case where $|\alpha| = 0$ and $(M,g)$ has nonpositive curvature, this bound was formally stated in terms of spectral clusters in \cite{SoggeBook2014}, although the techniques required to prove it were first presented in \cite{Berard1977}. The result of \cite{Bonthonneau2017} can be easily used to extend the $|\alpha| = 0$ estimate to the case of manifolds with no conjugate points. The addition of derivatives is a new result, but we will postpone the proof, since it is largely a repetition of arguments found in Section \ref{asymptotics}.

It follows from Lemma \ref{spectral_cluster_log} that if $\lambda^2$ is in the spectrum of $\Delta_g$, we can shift to some slightly larger $\mu^2$ which is not an eigenvalue. The error introduced in doing so then satisfies
 
\begin{align*}
\left| \partial_x^\alpha\partial_y^\beta\lp E_{ \mu }(x,y)\! - \!E_{\lambda}(x,y)\rp\right| & \!\le\! \lp\sum\limits_{\lambda_j\in (\lambda,\mu]}\!\!|\partial_x^\alpha\varphi_j(x)|^2\rp^{1/2}\!\!\!\lp\sum\limits_{\lambda_j\in (\lambda,\mu]}\!\!|\partial_y^\beta\varphi_j(y)|^2\rp^{1/2}\\
& \le \frac{C\lambda^{n-1+|\alpha|+|\beta|}}{\log\lambda},
\end{align*}
 
\noindent provided that $|\mu  - \lambda| \le A$ for $A$ as above, which is always possible since the spectrum of $\Delta_g$ is discrete. 

 Now, formally interchanging the summation and the integral in \eqref{E_lambda_nocutoff} we would have 
\begin{equation}\label{formal_interchange}
E_\lambda(x,y) = \frac{1}{\pi}\int\limits_{-\infty}^\infty \frac{\sin (t\lambda)}{t}K(t,x,y)\,dt,
\end{equation}
where 
\[K(t,x,y) = \sum_{j=0}^\infty \cos (t\lambda_j) \,\varphi_j(x) {\varphi_j(y)}\]
 is the Schwartz kernel of $\cos (t\sqrt\Delta_g)$. This interchange is justified at the level of operator kernels if we allow $E_\lambda(x,y)$ to act on a $C^\infty$ function $f$ by integration in $y$. In this case the summation involves the Fourier coefficients of $f$, which have sufficient decay to guarantee that the sum converges absolutely, and thus we are justified in interchanging the sum and the integral. 

At this point it is convenient to introduce a smooth, even cutoff function $\wh\rho$ which will allow us to restrict the support of the integrand in \eqref{formal_interchange} to a region where we can approximate $K(t,x,y)$ by a parametrix. The error introduced in doing so can be controlled as follows.
 
\begin{prop}
\label{exact_kernel_remainder}
Let $(M,g)$ be as in Theorem \ref{logthm} and let $\wh\rho\in C_c^\infty(\R)$ be an even function with $\wh\rho(t) = 1$ for all $|t| < \frac{1}{2}\textnormal{inj}(M,g)$ and with support in $[-L,L]$ for some $L < \inj(M,g)$. Then, there exist constants $c,C,\lambda_0 > 0$ so that if $A = \frac{1}{c\log\lambda}$, we have 
\begin{equation}\label{exact_kernel_remainder_eqn}
\sup\limits_{x,y\in M}\left|\partial_x^\alpha\partial_y^\beta\lp E_\lambda(x,y) - \frac{1}{\pi}\int\limits_{-\infty}^\infty \wh\rho(At)\frac{\sin (t\lambda)}{t}K(t,x,y)\,dt\rp\right| \le \frac{C\lambda^{n-1+|\alpha|+|\beta|}}{\log\lambda}
\end{equation}
for all $\lambda \ge \lambda_0.$   
\end{prop}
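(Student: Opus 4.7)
The plan is to rewrite the left-hand side of \eqref{exact_kernel_remainder_eqn} as a weighted spectral sum, control the weight using Schwartz decay, and apply Cauchy--Schwarz together with \lemref{spectral_cluster_log}. Substituting $K(t,x,y)=\sum_j\cos(t\lambda_j)\varphi_j(x)\varphi_j(y)$ into the integral and interchanging sum and integral (justified exactly as in the derivation of \eqref{formal_interchange}), the quantity inside the absolute value becomes
\[
D_\lambda(x,y)=\sum_{j=0}^\infty m_\lambda(\lambda_j)\,\varphi_j(x)\varphi_j(y),\qquad m_\lambda(\tau):=\frac{1}{\pi}\int_{-\infty}^\infty\bigl(1-\wh\rho(At)\bigr)\frac{\sin(t\lambda)}{t}\cos(t\tau)\,dt.
\]

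A direct evaluation of this Fourier integral (using $\sin(t\lambda)\cos(t\tau)=\tfrac12[\sin(t(\lambda+\tau))+\sin(t(\lambda-\tau))]$ together with $\wh\rho(0)=1$) expresses $m_\lambda(\tau)$ as the difference between $\mathds 1_{[-\lambda,\lambda]}(\tau)$ and a sum of two antiderivatives of the inverse Fourier transform $\rho$ of $\wh\rho$, evaluated at $(\lambda\pm\tau)/A$. Because $\wh\rho\in C_c^\infty$, $\rho$ is Schwartz, so for every $N\ge 1$ and all $\tau\ge 0$,
\[
|m_\lambda(\tau)|\le C_N\bigl(1+|\tau-\lambda|/A\bigr)^{-N}+C_N\bigl(1+(\tau+\lambda)/A\bigr)^{-N}.
\]
For $\tau=\lambda_j\ge 0$ and $\lambda$ large, only the first summand is significant.

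Differentiating $D_\lambda$ termwise and applying Cauchy--Schwarz in $j$ yields
\[
\bigl|\partial_x^\alpha\partial_y^\beta D_\lambda(x,y)\bigr|^2\le\Bigl(\sum_j|m_\lambda(\lambda_j)|\,|\partial_x^\alpha\varphi_j(x)|^2\Bigr)\Bigl(\sum_j|m_\lambda(\lambda_j)|\,|\partial_y^\beta\varphi_j(y)|^2\Bigr),
\]
so it suffices to bound each factor by $C\lambda^{n-1+2|\alpha|}/\log\lambda$ uniformly in $x$. Partition the spectrum into windows $I_\ell=(\ell A,(\ell+1)A]$ for $\ell\ge 0$; on $I_\ell$, $|m_\lambda|$ is at most $C_N(1+|\ell-\lambda/A|)^{-N}$, while the unweighted sum equals $\partial_x^\alpha\partial_y^\alpha E_{I_\ell}(x,y)\big|_{y=x}$ and is thus controlled by \lemref{spectral_cluster_log} (with $\ell A$ in place of $\lambda$) by
\[
C(\ell A)^{2|\alpha|}\bigl[A(\ell A)^{n-1}+e^{C_2/A}A\max\{(\ell A)^{(n-1)/2},(\ell A)^{n-3}\}\bigr].
\]
Choosing $A=1/(c\log\lambda)$ with $c>0$ small enough (depending only on $n$) so that $e^{C_2/A}=\lambda^{C_2 c}$ is beaten by $\lambda^{(n-1)/2}$ makes the error bracket subordinate to $A(\ell A)^{n-1}$ whenever $\ell A\sim\lambda$; the dominant contribution to $\sum_\ell$ then comes from $\ell A\sim\lambda$, producing the desired $C\lambda^{n-1+2|\alpha|}A=C\lambda^{n-1+2|\alpha|}/\log\lambda$.

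Summing the contribution from windows with $\ell A$ far from $\lambda$—where the cluster bound grows polynomially in $\ell$ while $|m_\lambda|$ decays rapidly, and where the finitely many low-frequency windows ($\ell A<\lambda_0$) must be handled separately via Weyl's law—uniformly in $\lambda$ is precisely the role of the technical summation estimate in Appendix \ref{appendix}. The main obstacle is balancing $e^{C_2/A}=\lambda^{C_2 c}$ in the cluster bound against the logarithmic loss budget: the constant $c$ must be chosen small relative to $n$ and $|\alpha|,|\beta|$ so that the error term of \lemref{spectral_cluster_log} is genuinely lower order, and the Schwartz exponent $N$ must then be taken large (in terms of $n$, $|\alpha|$, $|\beta|$) so that the weighted sum over distant spectral windows is uniformly bounded in $\lambda$.
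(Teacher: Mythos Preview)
Your proposal is correct and follows essentially the same approach as the paper: rewrite the difference as a spectral sum with multiplier $m_\lambda$ (the paper's $h_{\lambda,A}$), establish the Schwartz-type bound $|m_\lambda(\tau)|\le C_N(1+A^{-1}||\tau|-\lambda|)^{-N}$, decompose into spectral windows of width $A$, apply \lemref{spectral_cluster_log} on each window, and sum via the localized summation lemma of Appendix~\ref{appendix}. The only cosmetic difference is that you apply Cauchy--Schwarz globally (with weight $|m_\lambda|^{1/2}$) before decomposing into windows, whereas the paper decomposes first and applies Cauchy--Schwarz window-by-window; these yield identical bounds.
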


\begin{proof}
We prove this result first for the case where $|\alpha| = |\beta| = 0.$ Observe that
\begin{align}\label{spectral_function_h_eqn}
\begin{split}
E_\lambda(x,y) - \frac{1}{\pi}\int\limits_{-\infty}^\infty \wh\rho(At)\frac{\sin (t\lambda)}{t}K(t,x,y)\,dt & = \sum\limits_{j=0}^\infty h_{\lambda,A}(\lambda_j)\varphi_j(x) {\varphi_j(y)},\\
\end{split}
\end{align}
where
\begin{equation}\label{remainder_function}
h_{\lambda,A}(\tau) = \mathds 1_{[-\lambda,\lambda]}(\tau) - \frac{1}{\pi}\int\limits_{-\infty}^\infty\wh \rho(At)\frac{\sin t\lambda}{t}\cos t\tau\,dt 
\end{equation}
for $\tau\in\R.$ We claim that $h_{\lambda,A}$ satisfies the bound 
\begin{equation}\label{remainder_function_bound}
|h_{\lambda,A}(\tau)| \le C_N\left(1 + A^{-1}\big| |\tau| - \lambda \big|\right)^{-N} 
\end{equation}
when $\lambda \ge 1$, for any $N = 1,2,3,\dotsc.$ To prove this, we note that if $\rho$ is the inverse Fourier transform of $\wh\rho$, then $\rho$ is an even Schwartz-class function with $\int \rho\,dt = \wh\rho(0) = 1$. Therefore,
\begin{align*}
\frac{1}{\pi}\int\limits_{-\infty}^\infty \wh\rho(At)\frac{\sin t\lambda}{t}\cos t\tau\,dt = \int\limits_{-\infty}^\infty \frac{1}{A}\rho\lp\frac{\tau-s}{A}\rp \mathds 1_{[-\lambda,\lambda]}(s)\,ds = \int\limits_{\frac{\tau - \lambda}{A}}^{\frac{\tau + \lambda}{A}}\rho(s)\,ds.
\end{align*}
When $|\tau| \gg \lambda$, we use the fact that $\rho$ is rapidly decaying and $\mathds 1_{[-\lambda,\lambda]}(\tau)$ is zero. When $\lambda \gg |\tau|$, we use that $\rho$ decays rapidly and integrates to one and that $\mathds 1_{[-\lambda,\lambda]}$ is identically one on its support. These facts combine to give \eqref{remainder_function_bound}.

We can therefore control the right-hand side of \eqref{spectral_function_h_eqn} using bounds on $h_{\lambda,A}$, along with Lemma \ref{spectral_cluster_log}. For this we break the summation into intervals of size $A$ as follows. For each $N > 0$, there exists a $C_N > 0$ so that 
\begin{equation}
\label{spectral_windows}
\left|\sum\limits_{j=0}^\infty h_{\lambda,A}(\lambda_j) \varphi_j(x) {\varphi_j(y)}\right|
\le \sum\limits_{k=0}^\infty\sum\limits_{\lambda_j\in[kA,(k+1)A]} \hspace{-.25in}C_N(1 + A^{-1}\big| \lambda - \lambda_j\big|)^{-N}\left|\varphi_j(x)\varphi_j(y)\right|
\end{equation}
by \eqref{remainder_function_bound}. In each interval, we can write $\lambda_j = As_j$ for some $s_j\in[k,k+1]$, and hence 
\[(1 + A^{-1}|\lambda - \lambda_j|)^{-N} = (1 + |A^{-1}\lambda - s_j|)^{-N} \le C_N(1 + |A^{-1}\lambda - k|)^{-N},\]
for some possibly larger $C_N>0,$ so we can use the triangle inequality to bound the right-hand side of \eqref{spectral_windows} by
\begin{equation}\label{k_summation}
\sum\limits_{k=0}^\infty \lp C_N(1 + |A^{-1}\lambda -k|)^{-N}\hspace{-2em}\sum\limits_{\lambda_j\in[kA\,,\,(k + 1)A]}\hspace{-1em}\left| \varphi_j(x) {\varphi_j(y)}\right|\rp.
\end{equation} 
Next, we seek to apply Lemma \ref{spectral_cluster_log} to each of the sums over $\lambda_j\in[kA,(k+1)A]$ with $\lambda = kA$. However, we must first discard all terms for which $kA \le \lambda_0$, where $\lambda_0$ is as in the statement of Lemma \ref{spectral_cluster_log}. To see that this is possible, observe that
\begin{equation}\label{small_k}
\sum\limits_{k\in\left[0,\frac{\lambda_0}{A}\right]}\sum\limits_{\lambda_j\in[kA\,,\,(k+1)A]}|\varphi_j(x) {\varphi_j(y)}| \le \sum\limits_{k\in\left[0,\frac{\lambda_0}{A}\right]}\sum\limits_{\lambda_j\in[0,\lambda_0+1]}|\varphi_j(x) {\varphi_j(y)}| \le \frac{C}{A},
\end{equation}
for some constant $C> 0$, since $(k+1)A \le \lambda_0 + 1$, the set $\{j:\,\lambda_j \le\lambda_0+1\}$ is finite, and each $\varphi_j$ is bounded. Note that here $C$ may depend on $\lambda_0$, but not on $A$.

Then, for all $k$ with $k \ge \frac{\lambda_0}{A}$, we have by Lemma \ref{spectral_cluster_log} and Cauchy-Schwarz that
\begin{equation}\label{k_cluster}
\sum\limits_{\lambda_j\in[kA,(k+1)A]}\hspace{-2em}\left|\varphi_j(x)\varphi_j(y)\right|
 \le C_1 \left[ A^nk^{n-1} + e^{C_2/A}\max\{A^{\frac{n+1}{2}}k^{\frac{n-1}{2}},A^{n-2}k^{n-3}\}\right].
\end{equation}
By Corollary \ref{localized_sums} we have for sufficiently large $N$ that
\[
\sum\limits_{k\ge \frac{\lambda_0}{A}}^\infty  C_N(1 + |A^{-1}\lambda -k|)^{-N}A^nk^{n-1} \le  \wt C_NA^n(A^{-1}\lambda)^{n-1} = \wt C_N A\lambda^{n-1},
\]
for some $\wt C_N > 0$. This is because the factor of $(1 + \big|A^{-1}\lambda -k\big|)^{-N}$ serves to localize the sum to the region where $k\approx A^{-1}\lambda$. Analogously, after potentially increasing $\wt C_N$, we have
\[
\sum\limits_{k\ge \frac{\lambda_0}{A}}^\infty  C_N(1 + |A^{-1}\lambda -k|)^{-N}e^{C_2/A}A^{\frac{n+1}{2}}k^{\frac{n-1}{2}} \le \wt C_N Ae^{C_2/A}\lambda^{\frac{n-1}{2}}
\]
and
\[
\sum\limits_{k\ge \frac{\lambda_0}{A}}^\infty  C_N(1 + |A^{-1}\lambda -k|)^{-N}e^{C_2/A}A^{n-2}k^{n-3} \le \wt C_N Ae^{C_2/A}\lambda^{n-3}.
\]

Therefore, by the above estimates and \eqref{k_cluster}, there is some $\wt C_N> 0$ so that
\[
\sum\limits_{k\ge \frac{\lambda_0}{A}}^\infty  \lp C_N(1 + |A^{-1}\lambda -k|)^{-N}\hspace{-2.2em}\sum\limits_{\lambda_j\in[kA\,,\,(k+1)A]}\hspace{-1em}\left| \varphi_j(x) {\varphi_j(y)}\right|\rp \le \wt C_N \left[ A\lambda^{n-1} + Ae^{C_2/A}\max\{\lambda^{\frac{n-1}{2}},\lambda^{n-3}\}\right].
\]

\noindent Now, if we take $A = \frac{1}{c\log\lambda}$ for $c > 0$, we have that $e^{C_2/A} = \lambda^{c C_2}$. Hence, if $c$ is chosen small enough, and if we increase $\lambda_0$ so that $A = \frac{1}{c\log\lambda} \le 1$ when $\lambda \ge \lambda_0$, we have 
\begin{equation}\label{large_k}
\sum\limits_{k\ge \frac{\lambda_0}{A}}^\infty\lp  C_N(1 + |A^{-1}\lambda -k|)^{-N}\hspace{-2em}\sum\limits_{\lambda_j\in[kA\,,\,(k+1)A]}\hspace{-1em} \left|\varphi_j(x) {\varphi_j(y)}\right| \rp\le \wt C_N\frac{\lambda^{n-1}}{\log\lambda},
\end{equation}
for all $\lambda \ge \lambda_0$ after possibly once again modifying $\wt C_N$. Picking some fixed $N$ large enough and combining \eqref{large_k} with \eqref{small_k}, we obtain
\[
\left|\sum\limits_{j=0}^\infty h_{\lambda,A}(\lambda_j) \varphi_j(x) {\varphi_j(y)}\right| \le \wt C_N \frac{\lambda^{n-1}}{\log\lambda} + C\log\lambda
\]
when $\lambda \ge \lambda_0$, since $A = \frac{1}{c\log\lambda}$. Note that since $n\ge 2$, the $\mathcal O\lp\frac{\lambda^{n-1}}{\log\lambda}\rp$ term dominates the $\mathcal O(\log\lambda)$ term as $\lambda \to\infty$, and hence we can choose some $\wt \lambda_0 \ge \lambda_0$ such that
\[\left|\sum\limits_{j=0}^\infty h_{\lambda,A}(\lambda_j) \varphi_j(x) {\varphi_j(y)}\right| \le \frac{C\lambda^{n-1}}{\log\lambda}\]
for all $\lambda \ge \wt \lambda_0$ and some $C > 0$.

To include $\partial_x^\alpha\partial_y^\beta$, we simply apply the estimate from \lemref{spectral_cluster_log} to obtain the appropriate modification of \eqref{k_cluster}, which is given by 
\[
\sum\limits_{\lambda_j\in[kA,(k+1)A]}\hspace{-1em}\left|\partial_x^\alpha\varphi_j(x)\partial_y^\beta\varphi_j(y)\right|\le C_1 \lambda^{|\alpha|+|\beta|}\left[ A^nk^{n-1} + e^{C_2/A}\max\{A^{\frac{n+1}{2}}k^{\frac{n-1}{2}},A^{n-2}k^{n-3}\}\right],
\]
 which only serves to increase the relevant powers of $\lambda$ by $|\alpha| +|\beta|$, and hence the proof goes through with no further adjustments.
 
\end{proof}

With Proposition \ref{exact_kernel_remainder} in hand, it now suffices to show that the integral in \eqref{exact_kernel_remainder_eqn} has the asymptotic behavior that we claimed in Theorem \ref{logthm}. To accomplish this, we use the Hadamard parametrix to approximate the cosine kernel, which we discuss in the following section.




\section{Approximation via the Hadamard parametrix}
\label{hadamard}

Given Proposition \ref{exact_kernel_remainder}, the proof of Theorem \ref{logthm} would be complete if we could show that for every $\alpha,\beta$, there exists $C,c > 0$ such that for all $\lambda$ sufficiently large, the remainder

\begin{equation}\label{remainder_K}
R_K(x,y,\lambda)\!\!:= \!\frac{1}{\pi}\!\!\!\int\limits_{-\infty}^\infty \!\!\wh\rho(At)\frac{\sin t\lambda}{t} K(t,x,y)\,dt - \frac{1}{(2\pi)^n}\!\!\!\!\!\!\!\!\int\limits_{|\xi|_{g_x^{-1}}\le \lambda}\!\!\!\!\!\!\! e^{i\langle \exp_x^{-1}(y),\xi\rangle_{g_x^{-1}}}\frac{d\xi}{\sqrt{\det g_x}}
\end{equation}
 satisfies
\begin{equation}\label{true_kernel_estimate}
\sup\limits_{d_g(x,y)\le \frac{1}{2}\inj(M,g)}\left|\partial_x^\alpha\partial_y^\beta R_K(x,y,\lambda)\right|\le\!\! \frac{C\lambda^{n-1+|\alpha| + |\beta|}}{\log\lambda}
\end{equation}
 when $A = \frac{1}{c\log\lambda}$. However, since it is not possible to compute $K(t,x,y)$ exactly, we instead approximate it using the Hadamard parametrix. In fact, as in \cite{Berard1977}, we will use the assumption of no conjugate points to lift to the universal cover of $M$ to ensure that the parametrix exists for large $|t|$. Our ability to control the parametrix for timescales on the order of $\log\lambda$ is what will allow us to estimate the integral involving $K(t,x,y)$ in \eqref{remainder_K} for $A = \frac{1}{c\log\lambda}$, since the integrand is supported where $t\in[-1/A,1/A]\approx [-\log\lambda,\log\lambda]$. The first part of this section consists of a summary of results about the Hadamard parametrix which are proved in other works, and we refer the reader to the appropriate sources which contain the corresponding details. Afterward, we prove that the error introduced in replacing $K(t,x,y)$ by a partial sum of the parametrix in \eqref{remainder_K} is sufficiently small, and we discuss some particular formulas for the parametrix terms which will be very useful when we wish to do the explicit asymptotic analysis in Section \ref{asymptotics}.

Since $(M,g)$ has no conjugate points, we know that for a fixed $x_0\in M$ the exponential map 
\[p:= \exp_{x_0}: T_{x_0}M\to M\]
 is a covering map, and hence $\wt M := T_{x_0}M\cong \R^n$ is the universal cover of $M$ when equipped with the metric $\wt g = p^*g.$ If we denote by $\Gamma$ the deck transformation group of isometries on $\wt M$ corresponding to $p$, the work of \cite{Berard1977} shows that the wave kernel $K(t,x,y)$ on the base manifold $M$ has an expansion of the form
\begin{equation}\label{parametrix}
K(t,x,y) = \sum\limits_{\nu = 0}^\infty\sum\limits_{\gamma\in\Gamma} u_\nu(\wt x,\gamma\wt y)\partial_t W_\nu(t,d_{\wt g}(\wt x,\gamma\wt y)) \, \mod C^\infty,
\end{equation}
where $\wt x,\wt y$ are any chosen lifts of $x,y\in M$. The coefficient functions $u_\nu$ are defined for any $\wt x,\wt y\in \wt M$ by 
\begin{equation}\label{parametrix_coeffs}
\begin{cases}
u_0(\wt x,\wt y)= \Theta^{-\frac{1}{2}}(\wt x,\wt y)\\
u_\nu(\wt x,\wt y) = \Theta^{-\frac{1}{2}}(\wt x,\wt y)\int\limits_0^1 s^{\nu-1}\Theta^{1/2}(\wt x,\alpha_{\wt x\wt y}(s))\Delta_{\wt g,\wt y} u_{\nu-1}(\wt x,\alpha_{\wt x\wt y}(s))\,ds, \hskip 0.1in \nu \ge 1,
\end{cases}
\end{equation}
where $\Theta(\wt x,\wt y) = |\det D_{\exp_{\wt x}^{-1}(\wt y)}\exp_{\wt x}|$ and $\alpha_{\wt x\wt y}$ is the unique minimizing geodesic in $(\wt M,\wt g)$ connecting $\wt x$ and $\wt y$ parametrized by arc length, which exists because the metric on $\wt M$ is uniquely geodesic. In $\R^n$, the distributions $W_\nu$ for $\nu = 0,1,2,\dotsc,$ are defined by
\begin{equation}\label{parametrix_terms_definition}
W_\nu(t,|w|) = \frac{\nu!}{(2\pi)^{n+1}}\lim\limits_{\ve\to 0^+}\int\limits_{\R^{n+1}} e^{i\langle w,\xi\rangle + it\tau}(|\xi|^2 -(\tau - i\ve)^2)^{-\nu-1}\,d\xi\,d\tau,
\end{equation}
for $w\in\R^n$ and $t > 0$. At $t = 0$, we have $W_\nu(0+,|w|) = \lim_{t\to0^+}W_\nu(t,|w|) = 0$ for all $\nu\ge0$ by \cite[Prop 1.2.4]{SoggeBook2014}. We then extend each distribution to $t\in\R$ by imposing the condition $W_\nu(-t,|w|) = -W_\nu(t,|w|)$ so that $W_\nu$ is odd in $t$.  It is clear from the definition that $W_\nu$ depends only on the norm of $w$, since it is the inverse Fourier transform of a radial distribution in $\xi$. It is also easy to verify from \eqref{parametrix_terms_definition} that $W_\nu$ is homogeneous of degree $2\nu -n +1$. Furthermore, as $\nu$ increases,  the extra decay of the integrand in $(\tau,\xi)$ results in additional regularity in $(t,w)$. In particular, we have that if $\nu > k + \frac{n-1}{2}$ for some integer $k$, then $W_\nu$ is a continuous function whose derivatives up to order $k$ are continuous \cite[\S 17.4]{HormanderBook1985}. One can then pull back via geodesic normal coordinates centered at $\wt x\in \wt M$ to obtain distributions $W_\nu(t,d_{\wt g}(\wt x,\wt y))$ defined on $\R\times \wt M\times\wt M$ (see \cite[\S 17.4]{HormanderBook1985} and \cite[\S 2.4]{SoggeBook2014} for details). Note that we use $\partial_t W_\nu$ in \eqref{parametrix}, rather than $W_\nu$ itself. This is due to the fact that the parametrix construction is generally done first for the kernel of $\frac{\sin(t\sqrt{\Delta_{\smash g}})}{\sqrt{\Delta_{\smash g}}},$ and then the parametrix for $\cos(t\sqrt{\Delta_{\smash{g}}})$ is obtained by differentiating in $t$. 

The sum over $\gamma\in \Gamma$ in \eqref{parametrix} is finite for any fixed $t$, since the wave equation has finite speed of propagation. Indeed, is a consequence of the Paley-Weiner theorem that $W_\nu(t,d_{\wt g}(\wt x,\wt y))$ is supported in the light cone $\{(t,\wt x,\wt y)\in\R\times\wt M\times \wt M:\,d_{\wt g}(\wt x,\wt y) \le |t|\}$. Additionally, by \cite[Lemma 6]{ColindeVerdiere1973}, we have that for any $\wt x,\wt y\in \wt M$,
\begin{equation}\label{gamma_count}
\#\{\gamma\in \Gamma: d_{\wt g}(\wt x,\gamma \wt y) \le |t|\} \le C_1 e^{C_2 |t|},
\end{equation}
where $C_1,C_2$ are positive constants which are independent of $\wt x,\wt y$. Therefore, at most $C_1e ^{C_2 |t|}$ terms in the sum over $\gamma\in \Gamma$ in \eqref{parametrix} are nonzero for any fixed $t$. We note that this result was stated in \cite{ColindeVerdiere1973} for $(M,g)$ having negative sectional curvature, but the proof only depends on the fact that the Ricci curvature of $(\wt M,\wt g)$ is bounded below.

Since we wish to use the parametrix instead of the exact wave kernel in the integral in \eqref{remainder_K}, we must estimate the difference between them. For any fixed $N \ge 0$ and $x,y\in M$, define
\begin{equation}\label{parametrix_partial_sums}
K_N(t,x,y) = \sum\limits_{\nu = 0}^N \sum\limits_{\gamma\in \Gamma}u_\nu(\wt x,\gamma\wt y)\partial_tW_\nu(t,d_{\wt g}(\wt x,\gamma \wt y)).
\end{equation}
The following proposition estimates the error introduced by using $K_N$ in place of $K$ in \eqref{remainder_K}, which is generalizes a result from \cite{Berard1977} to include derivatives in $x$ and $y$.

\begin{prop}\label{parametrix_error}
Let $(M,g)$ be as in Theorem 1, and let $\wh\rho\in C_c^\infty(\R)$ be as in \propref{exact_kernel_remainder}. Let $K$ be the kernel of $\cos(t\sqrt{\Delta_g})$ and let $K_N$ be defined by \eqref{parametrix_partial_sums}. If $\alpha,\beta$ are multi-indices and if $N> m + |\alpha| +\frac{n+1}{2}$ for some integer $m > \frac{n}{2} +|\beta| - 1$, then there exist constants $C_1,C_2>0$ so that for any $0 < A \le 1,$ we have
\begin{equation}\label{parametrix_error_eqn}
\sup\limits_{x,y\in M}\left|\frac{1}{\pi}\int\limits_{-\infty}^\infty \wh\rho(At)\frac{\sin t\lambda}{t}\partial_x^\alpha\partial_y^\beta\lp K_N(t,x,y) - K(t,x,y)\rp\,dt\right| \le C_1 e^{C_2/A}
\end{equation}
for all $\lambda >0$.
\end{prop}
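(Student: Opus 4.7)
The plan is the standard Duhamel approach: since $K_N$ is by construction a parametrix for $\cos(t\sqrt{\Delta_g})$, the difference $K - K_N$ satisfies an inhomogeneous wave equation whose source gains regularity as $N \to \infty$. Once that regularity is available, the oscillation in $\sin(t\lambda)/t$ can be absorbed by integration by parts in $t$, producing a bound uniform in $\lambda$, while the exponential factor $e^{C_2/A}$ arises from the deck-transformation count \eqref{gamma_count} combined with the fact that $\wh\rho(At)$ localizes $t$ to $|t| \le L/A$.

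Working on the universal cover, write $\wt K_N(t,\wt x,\wt y) = \sum_{\nu=0}^N u_\nu(\wt x,\wt y)\,\partial_t W_\nu(t,d_{\wt g}(\wt x,\wt y))$ for the unsummed analogue of \eqref{parametrix_partial_sums}. The Hadamard construction in \cite[\S 17.4]{HormanderBook1985} and \cite[\S 2.4]{SoggeBook2014} arranges the recurrences \eqref{parametrix_coeffs} so that almost every term cancels when the wave operator $\partial_t^2 + \Delta_{\wt g,\wt y}$ is applied to $\wt K_N$; the residual
\[
r_N(t,\wt x,\wt y) := (\partial_t^2 + \Delta_{\wt g,\wt y})\wt K_N(t,\wt x,\wt y)
\]
is (up to smooth prefactors) of the form $(\Delta_{\wt g,\wt y} u_N)(\wt x,\wt y)\cdot \partial_t W_N(t,d_{\wt g}(\wt x,\wt y))$. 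Using the regularity statement $W_\nu \in C^k$ whenever $\nu > k + \tfrac{n-1}{2}$, together with the one extra derivative spent on $\partial_t$, this yields $r_N \in C^k(\R \times \wt M \times \wt M)$ whenever $N > k + \tfrac{n+1}{2}$.

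Since $\wt K_N$ is built to match the Cauchy data of the wave kernel $\wt K$ on $\wt M$ at $t=0$, Duhamel's principle expresses $\wt K - \wt K_N$ as a time-convolution of $\wt K$ with $r_N$ in the $\wt y$ variable. A standard energy estimate on $\wt M$, combined with Sobolev embedding in $\wt y$ (which costs $m > n/2$ additional derivatives to pass from an $H^{|\beta|+m}_{\wt y}$ norm to a $C^{|\beta|}_{\wt y}$ pointwise bound), gives
\[
\bigl|\partial_{\wt x}^\alpha \partial_{\wt y}^\beta (\wt K - \wt K_N)(t,\wt x,\wt y)\bigr| \le C(1 + |t|)^p
\]
for some fixed $p$. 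This is where the asymmetry in $\alpha,\beta$ enters: the $|\alpha|$ derivatives in $\wt x$ are parameters and must be supplied by the pointwise regularity of $r_N$, while the $|\beta|$ derivatives in $\wt y$ are propagated by the wave equation and extracted via Sobolev embedding, costing only an additional $m > \tfrac{n}{2} + |\beta| - 1$ derivatives on the source. Matching against the regularity of $r_N$ from the previous paragraph forces exactly $N > m + |\alpha| + \tfrac{n+1}{2}$ with $m > \tfrac{n}{2} + |\beta| - 1$. Summing over $\gamma \in \Gamma$ and applying \eqref{gamma_count}, the projected difference on $M \times M$ obeys $|\partial_x^\alpha \partial_y^\beta (K - K_N)(t,x,y)| \le C' e^{C''|t|}$, after absorbing polynomial growth into the exponential.

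It remains to estimate \eqref{parametrix_error_eqn}. The cutoff $\wh\rho(At)$ localizes $t$ to $|t| \le L/A$, on which the previous bound is at most $C' e^{C''L/A}$. To handle the $\sin(t\lambda)/t$ factor uniformly in $\lambda$, write $\sin(t\lambda)/t = \int_0^\lambda \cos(t\mu)\,d\mu$, interchange the order of integration, and integrate by parts in $t$ against $\cos(t\mu)$ a fixed number $M \ge 2$ of times, using the $t$-smoothness of $(K - K_N)$ inherited from $r_N$ via Duhamel. Each integration by parts produces a factor of $\mu^{-1}$ together with a boundary term killed by $\wh\rho(At)$, and the resulting $\mu$-integral $\int_0^\lambda \mu^{-M}\,d\mu$ is bounded uniformly in $\lambda$. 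The polynomial factors in $1/A$ produced along the way (from differentiating $\wh\rho(At)$) are absorbed into the exponential, yielding the stated bound $C_1 e^{C_2/A}$. The main obstacle throughout is the bookkeeping: $|\alpha|$ pointwise derivatives, $|\beta| + m$ Sobolev derivatives, and a few $t$-derivatives must all be drawn from the single regularity statement on $r_N$, which is what dictates the quantitative hypotheses on $N$ and $m$.
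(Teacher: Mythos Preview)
Your overall architecture is exactly the paper's: set $R_N = K_N - K$, observe it solves an inhomogeneous wave equation with zero Cauchy data and source $F_N \sim (\Delta_{\wt g}u_N)\partial_t W_N$, use an $H^m$ energy inequality in $y$ followed by Sobolev embedding (this is precisely the source of the asymmetric hypotheses on $\alpha,\beta$), and control $F_N$ via the exponential bounds on $u_N$, on $\partial_t W_N$, on ball volumes, and on the deck count \eqref{gamma_count}. One organizational difference: the paper runs the energy estimate on the compact base $M$ and only lifts to $\wt M$ to analyze $F_N$, whereas you run everything on $\wt M$ and project at the end. Either works, but your intermediate claim that $\partial_{\wt x}^\alpha\partial_{\wt y}^\beta(\wt K-\wt K_N)$ grows only polynomially in $|t|$ on the cover is not right as stated: the coefficients $u_N$ already grow like $e^{Cd_{\wt g}(\wt x,\wt y)}$, and after finite speed of propagation and the $L^2$ volume factor this forces exponential growth in $|t|$ before any $\gamma$-sum.

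The genuine gap is your final paragraph. The claim that $\int_0^\lambda \mu^{-M}\,d\mu$ is bounded uniformly in $\lambda$ is false for every $M\ge 1$: the integral diverges at $\mu=0$, so integration by parts against $\cos(t\mu)$ does not yield a $\lambda$-uniform bound. The paper avoids this entirely with a one-line observation you already have in hand but did not exploit: since $R_N$ and $\partial_t R_N$ both vanish at $t=0$ (the matched Cauchy data) and $R_N$ is even in $t$, the quotient $t^{-1}\partial_x^\alpha\partial_y^\beta R_N(t,x,y)$ is bounded near $t=0$ and obeys the same $C_1 e^{C_2|t|}$ bound globally. Then $|\sin(t\lambda)|\le 1$ and $\operatorname{supp}\wh\rho(A\cdot)\subset[-L/A,L/A]$ give \eqref{parametrix_error_eqn} immediately, with no oscillatory argument and no extra $t$-derivatives on $R_N$ (hence no hidden increase in the required $N$). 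If you prefer to salvage your route, split the $\mu$-integral at $\mu=1$: use the trivial bound on $[0,1]$ and your integration by parts on $[1,\lambda]$; but the paper's method is both shorter and sharper.
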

\begin{proof}
Since $\wh\rho(At)$ is uniformly bounded and equal to zero outside the interval $t\in[-1/A,1/A]$, the above estimate would follow immediately from the bound
 
\begin{equation}\label{pointwise_remainder_bound}
\sup\limits_{x,y\in M}\left|\frac{1}{t}\partial_x^\alpha\partial_y^\beta\lp K_N(t,x,y) - K(t,x,y)\rp\right| \le C_1 e^{C_2 |t|}.
\end{equation}
 
We prove this bound using some standard energy inequalities for the wave equation and a Sobolev embedding, along with some pointwise bounds on derivatives of $u_\nu$ and $\partial_tW_\nu$ which are direct consequences of results from Appendix \ref{geometric_estimates}. The Hadamard parametrix construction in \cite{Berard1977} shows that the remainder
\[R_N(t,x,y) = K_N(t,x,y) - K(t,x,y)\]
satisfies an inhomogeneous wave equation of the form 
\[\begin{cases}
(\partial_t^2 + \Delta_{g,y})R_N(t,x,y) = F_N(t,x,y),\\
R_N(0,x,y) = 0\\
\partial_t R_N(0,x,y) = 0,
\end{cases}\]
where $F_N(t,x,y) = C\sum\limits_{\gamma\in\Gamma}(\Delta_{\wt g,\wt y}u_N(\wt x,\gamma\wt y))\partial_t W_N(t,d_{\wt g}(\wt x,\gamma\wt y))$ for any lifts $\wt x,\wt y$ of $x,y$ and some constant $C$, and $F_N$ is of class $C^{m+|\alpha|}$, provided $N > m + |\alpha| +\frac{n+1}{2}$, . Noting that derivatives in $x$ commute with $\Delta_{g,y}$, we have that 
\[\begin{cases}
(\partial_t^2 + \Delta_{g,y})(\partial_x^\alpha R_N(t,x,y)) = \partial_x^\alpha F_N(t,x,y)\\
\partial_x^\alpha R_N(0,x,y) = 0\\
\partial_t \lp\partial_x^\alpha R_N(0,x,y) \rp= 0.
\end{cases}\] 

A standard energy inequality for wave equations with vanishing initial data (see \cite[Ch. 47]{TrevesBook1975}) yields that for any $x\in M$ and $t > 0$, 
\begin{equation}\label{sobolev_remainder}
 \left\|\partial_x^\alpha R_N(t,x,\cdot)\right\|_{H^{m+1}(M)} \le C_1 e^{C_2 t}\int\limits_0^t \|\partial_x^\alpha F_N(s,x,\cdot)\|_{H^m(M)}\,ds,
\end{equation}
for some constants $C_1,C_2>0,$ where $H^m(M)$ is the standard $L^2$-based Sobolev space of order $m$. By hypothesis, $m + 1 > \frac{n}{2} + |\beta|$, and hence by Sobolev embedding, we have
\begin{equation}\label{sobolev_embedding}
\sup\limits_{y\in M}|\partial_x^\alpha\partial_y^\beta R_N(t,x,y)| \le C_1 e^{C_2 t}\int\limits_0^t \|\partial_x^\alpha F_N(s,x,\cdot)\|_{H^m(M)}\,ds,
\end{equation}
for some possibly different $C_1,C_2 > 0.$ 

In order to analyze $\partial_x^\alpha F_N(t,x,y)$, we must first identify $\partial_x^\alpha$ with an operation on the cover, which we can accomplish by locally pulling back via the covering map $p.$ To be more precise, if we fix $\wt x\in \wt M$, we can identify a small enough coordinate patch $U_{\wt x}$ containing $\wt x$ with a coordinate patch on $M$, since $p|_{U_{\wt x}}$ is an isometry, and therefore invertible, if $U_{\wt x}$ is small enough. Thus, if $\partial_x^\alpha$ indicates differentiation in the coordinates on $M$, we can identify it with an operator $P_{\wt x}$ involving only differentiation in the coordinates on $\wt M$ and derivatives of $p|_{U_{\wt x}}^{-1}$. Since $p$ is a local isometry and $M$ is compact, we have that $P_{\wt x}\in \textnormal{Diff}(\wt M)$, where $\textnormal{Diff}(\wt M)$ denotes the algebra of $C^\infty$-bounded differential operators on $\wt M,$ defined as in \cite[Appendix A.1]{Shubin1992}. That is, we say that $P_{\wt x}$ is a $C^\infty$-bounded differential operator of order $k$ if for some fixed $r\in (0,\inj(\wt M))$, we can express $P_{\wt x}$ as 
\[\sum\limits_{|\sigma|\le k} a_\sigma(\wt x)\partial_{\wt x}^\sigma\]
in any canonical coordinate neighborhood of radius $r$, where the $a_\sigma$ are smooth functions with $|\partial_{\wt x}^\alpha a_\sigma(\wt x)| \le C_{\alpha}$ for all $\alpha$, and the constant is independent of the choice of coordinate neighborhood. Thus, we may interpret $\partial_x^\alpha F_N(t,x,y)$ as 
\[C\sum\limits_{\gamma\in\Gamma}P_{\wt x}\left[(\Delta_{\wt g,\wt y}u_N(\wt x,\gamma\wt y))\partial_t W_N(t,d_{\wt g}(\wt x,\gamma\wt y))\right].\]
Recalling \eqref{gamma_count}, the definition of $H^m$, and the fact that $\partial_t W_N$ is supported where $d_{\wt g}(\wt x,\wt y)\le |t|$, we have that for $t > 0$,
\begin{align}\label{F_N_bound}
\begin{split}
\|\partial_x^\alpha F_N(t,x,\cdot)\|_{H^m(M)} &\le C\sum\limits_{\gamma\in\Gamma}\|(1 + \Delta_{\wt g,\wt y})^{m/2}P_{\wt x}\left[(\Delta_{\wt g,\wt y}u_\nu(\wt x,\gamma\cdot))\partial_t W_N(t,d_{\wt g}(\wt x,\gamma\cdot))\right]\|_{L^2(\wt M)}\\
 &\le  C_1 e^{C_2t}\big\|(1 + \Delta_{\wt g,\wt y})^{m/2}P_{\wt x}\left[(\Delta_{\wt g,\wt y}u_N(\wt x,\cdot))\partial_t W_N(t,d_{\wt g}(\wt x,\cdot))\right]\big\|_{L^2(\wt M)},
\end{split}
\end{align}
since $\Delta_{\wt g,\wt y}$ commutes with isometries acting in the $\wt y$ variable. We claim that the function inside the $L^2$ norm on the right-hand side is bounded pointwise by a constant multiple of $e^{C_3 s}\mathds 1_{[0,s]}(d_{\wt g}(\wt x,\cdot))$ for some $C_3 > 0.$ Since $\Delta_{\wt g,\wt y}\in \textnormal{Diff}(\wt M)$, it will suffice to show that for any $P_{\wt x},Q_{\wt y}\in \textnormal{Diff}(\wt M)$, 
\begin{equation}\label{coeff_bound}
|P_{\wt x}Q_{\wt y}u_N(\wt x,\wt y)| \le C' e^{C'' d_{\wt g}(\wt x,\wt y)},
\end{equation}
and 
\begin{equation}\label{W_nu_bound}
|P_{\wt x}Q_{\wt y}\partial_t W_N(s,d_{\wt g}(\wt x,\wt y))|\le C'e^{C''s}\mathds 1_{[0,s]}(d_{\wt g}(\wt x,\wt y)),
\end{equation}
for some $C',C'' > 0$ which may depend on $N$, $P_{\wt x}$, and $Q_{\wt y}$. Inequality \eqref{coeff_bound} is exactly the content of \lemref{u_nu_bound}, which is proved in Appendix \ref{geometric_estimates}, so we need only show \eqref{W_nu_bound}. For this, we use the observation from \cite[\S 17.4]{HormanderBook1985} that $W_N(s,d_{\wt g}(\wt x,\wt y))$ is a constant multiple of $(s^2 - d_{\wt g}(\wt x,\wt y)^2)_+^{N - \frac{n-1}{2}}$ . Our hypotheses ensure that $N$ is sufficiently large so that $W_N$ remains a continuous function after applying $\partial_t,$ $P_{\wt x}$, and $Q_{\wt y}$. Since factors of $d_{\wt g}(\wt x,\wt y)^2$ may appear due to the chain rule, we must apply \lemref{exp_map} to control the derivatives of these factors. We then have that $P_{\wt x}Q_{\wt y}\partial_t W_N(s,d_{\wt g}(\wt x,\wt y))$ exhibits at most exponential growth in $d_{\wt g}(\wt x,\wt y)$ and depends polynomially on $s$. Recalling that $W_N$ is supported where $d_{\wt g}(\wt x,\wt y) \le s$ gives \eqref{W_nu_bound}. 

Combining \eqref{coeff_bound} and \eqref{W_nu_bound} with \eqref{sobolev_embedding} and \eqref{F_N_bound}, we obtain
\[\sup\limits_{y\in M}|\partial_x^\alpha\partial_y^\beta R_N(t,x,y)| \le C_1 e^{C_2 t}\int\limits_0^t e^{C_3s}\|\mathds 1_{[0,s]}(d_{\wt g}(\wt x,\cdot))\|_{L^2(\wt M)}\,ds.\]
Since the curvature of $\wt M$ is bounded below, the volume of the geodesic ball centered at $\wt x$ of radius $s$ can grow at most exponentially fast in $s$ with constants independent of $\wt x$, and hence we have
\[\sup\limits_{x,y\in M}|\partial_x^\alpha\partial_y^\beta R_N(t,x,y)| \le C_1 e^{C_2 t}\]
after possibly increasing $C_1$ and $C_2.$ Recalling that $R_N$ and $\partial_t R_N$ vanish as $t\to 0^+$ and that $R_N$ is even with respect to $t$, we can also write 
\[\sup\limits_{x,y\in M}\left|\frac{1}{t}\partial_x^\alpha\partial_y^\beta R_N(t,x,y)\right| \le C_1 e^{C_2| t|},\]
for $t\in\R$, which is exactly \eqref{pointwise_remainder_bound}, and so the proof is complete.
\end{proof}
Before we explicitly estimate the integral involving $K(t,x,y)$ in \eqref{remainder_K}, we take note of another formula for $\partial_t W_\nu$. By \eqref{parametrix_terms_definition} and standard Fourier transform techniques, we have that $W_0(t,|w|)$ for $w\in \R^n$ solves ${(\partial_t^2 + \Delta_{\R^n})W_0(t,|w|) = 0}$ with initial conditions ${W_0(0,|w|) = 0},$ $
\partial_t W_0(0,|w|) = \delta(w),$ where $\delta$ is the Dirac distribution centered at $w = 0.$ Since $W_0(t,|w|)$ is supported in the union of the forward and backward light cones, we have by uniqueness of solutions to the wave equation that 
\[
W_0(t,|w|) = \frac{1}{(2\pi)^n}\int\limits_{\R^n}e^{i\langle w,\xi\rangle}\frac{\sin (t|\xi|)}{|\xi|}\,d\xi
\]
and thus 
\begin{equation}
\partial_tW_0(t,|w|) = \frac{1}{(2\pi)^n} \int\limits_{\R^n}e^{i\langle w,\xi\rangle}\cos (t|\xi|)\,d\xi.
\end{equation}
It is a straightforward calculation to see from \eqref{parametrix_terms_definition} that $\partial_t W_\nu = \frac{t}{2}W_{\nu-1}$ for any $\nu \ge 1$, and hence one can use integration by parts and induction to show that 
\begin{equation}
\partial_tW_\nu(t,|w|) = \sum\limits_{j+k= \nu-1}\sum\limits_{\pm} \frac{C^\pm_{j,k}}{(2\pi)^n}\int\limits_{\R^n} e^{i\langle w,\xi\rangle\pm it|\xi|} t^{j+1} |\xi|^{-\nu-k}\,d\xi,
\end{equation}
where $j,k$ are nonnegative integers, the $C^\pm_{j,k}$ are some constants depending only on $j,$ $k$, and $\nu$ \cite[Rmk 1.2.5]{SoggeBook2014}. Here we interpret each term in the sense of Fourier integral operators. We note that the above formula is singular at $\xi = 0,$ but this is of little consequence for our application. To see this, we may introduce a smooth cutoff function $\chi\in C_c^\infty(\R)$ such that $\chi \equiv 0$ on $[-1,1]$ and $\chi \equiv 1$ outside $[-2,2]$. Then 
\begin{equation}\label{near_zero_cutoff}
\int\limits_{\R^n}e^{i\langle w,\xi\rangle\pm i t|\xi|}(1 - \chi(|\xi|))|\xi|^{-\nu-k}\,d\xi
\end{equation}
is the inverse Fourier transform of a family of compactly supported distributions in $\xi$ which depends in a smooth and bounded way on $t\in\R.$ Recall that the Fourier transform maps $\mathscr E'(\R^n)\to C^\infty(\R^n)$ and $\mathscr S'(\R^n)\to\mathscr S'(\R^n),$ where $\mathscr E'(\R^n)$ denotes the space of compactly supported distributions and $\mathscr S'(\R^n)$ denotes the space of tempered distributions. Since $e^{\pm it|\xi|}(1 - \chi(|\xi|))|\xi|^{-\nu-k}$ lies in the intersection of $\mathscr E'$ and $\mathscr S'$, we see that \eqref{near_zero_cutoff} is therefore a smooth and tempered function of $(t,w).$ Thus, we can write 
\begin{equation}
\partial_tW_\nu(t,|w|) = \!\!\!\!\!\sum\limits_{j+k= \nu-1}\sum\limits_{\pm} \frac{C^\pm_{j,k}}{(2\pi)^n}\int\limits_{\R^n} e^{i\langle w,\xi\rangle\pm it|\xi|} t^{j+1} |\xi|^{-\nu-k}\chi(|\xi|)\,d\xi + f_\nu(t,w),
\end{equation}
for some $f_\nu:\R\times \R^n \to \C$ which is smooth and tempered as a function of $(t,w)$. Pulling back via the inverse exponential map $\exp_{\wt x}^{-1}:\wt M\to T_{\wt x}^*\wt M$ then gives
\begin{align}\begin{split}\label{linear_comb}
& \partial_tW_\nu(t,d_{\wt g}(\wt x,\wt y))=\!\!\!\!\sum\limits_{j+k= \nu-1}\!\sum\limits_{\pm} \frac{C^\pm_{j,k}}{(2\pi)^n}\!\!\int\limits_{T_{\wt x}^*M}\!\!\!\! e^{i\langle \exp_{\wt x}^{-1}(\wt y),\xi\rangle_{\wt g}\pm it|\xi|} t^{j+1} |\xi|_{\wt g}^{-\nu-k}\frac{\chi(|\xi|)d\xi}{\sqrt{\det\wt g_{\wt x}}}\\
& \hspace{1.3in} + f_\nu(t,\exp_{\wt x}^{-1}(\wt y)).
\end{split}
\end{align}
 Here we recall that $\langle\cdot,\cdot\rangle_{\wt g}$ and $|\cdot|_{\wt g}$ are taken to mean the inner product and norm on the cotangent fibers, respectively.  Similarly pulling back the formula for $\partial_t W_0$, we obtain
\begin{equation}\label{zero_term_cosine}
\partial_tW_0(t,d_{\wt g}(\wt x,\wt y)) = \frac{1}{(2\pi)^n} \int\limits_{T_{\wt x}^*M}e^{i\langle \exp_{\wt x}^{-1}(\wt y),\xi\rangle_{\wt g}}\cos (t|\xi|_{\wt g})\frac{d\xi}{\sqrt{|\wt g_x|}}.
\end{equation}
We make extensive use of formulas \eqref{linear_comb} and \eqref{zero_term_cosine} in Section \ref{asymptotics}.




\section{Explicit Asymptotics}
\label{asymptotics}
By taking $A = \frac{1}{c\log\lambda}$ in Proposition \ref{parametrix_error} for $c$ small enough and combining it with Proposition \ref{exact_kernel_remainder}, we have reduced the proof of Theorem \ref{logthm} to showing that the following estimate holds. This is because the $C_1e^{C_2/A} = C_1\lambda^{cC_2}$ error bound in Proposition \ref{parametrix_error} is much smaller than $\mathcal O\lp\frac{\lambda^{n-1+|\alpha|+|\beta|}}{\log\lambda}\rp$ for $c$ small and $\lambda$ large. 

\begin{prop}\label{parametrix_estimate}
Let $(M,g)$ be as in Theorem \ref{logthm} and fix $\wh\rho\in C_c^\infty(\R)$ as in \propref{exact_kernel_remainder}. Then, for any integer $N\ge 0$ and any multi-indices $\alpha,\beta$, there exist positive constants $c,C,\lambda_0$ so that if $A = \frac{1}{c\log\lambda}$, then
\begin{align}\label{parametrix_estimate_eqn}
\begin{split}
\frac{1}{\pi}\int\limits_{-\infty}^\infty \wh\rho(At) \frac{\sin (t\lambda)}{t}K_N(t,x,y)\,dt & = \frac{\lambda^n}{(2\pi)^n}\int\limits_{B_x^*M} \!\!\!\! e^{i\lambda\langle \exp_x^{-1}(y),\xi\rangle_{\wt g}}\frac{d\xi}{\sqrt{\det g_x}} + R_{N,A}(x,y,\lambda),
\end{split}
\end{align}
where 
\[\sup\limits_{d_g(x,y)\le \frac{1}{2}\inj(M,g)}|\partial_x^\alpha\partial_y^\beta R_{N,A}(x,y,\lambda)|\le \frac{C\lambda^{n-1+|\alpha|+|\beta|}}{\log\lambda}\]
for all $\lambda \ge\lambda_0$.
\end{prop}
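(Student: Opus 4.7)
The approach is to substitute the expansion \eqref{parametrix_partial_sums} for $K_N$ into the left-hand side of \eqref{parametrix_estimate_eqn}, represent each $\partial_tW_\nu$ via the oscillatory-integral formulas \eqref{zero_term_cosine} and \eqref{linear_comb}, and then swap the $t$- and $\xi$-integrations via Fubini to perform the $t$-integral first. For $\nu=0$ the inner $t$-integral is exactly $\frac{1}{\pi}\int\wh\rho(At)\frac{\sin(t\lambda)}{t}\cos(t|\xi|_{\wt g})\,dt=\mathds{1}_{[-\lambda,\lambda]}(|\xi|_{\wt g})-h_{\lambda,A}(|\xi|_{\wt g})$, for which \eqref{remainder_function_bound} supplies sharp Schwartz-type control on $h_{\lambda,A}$. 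For $\nu\ge 1$, the substitution $s=At$ turns each summand of \eqref{linear_comb} into a Schwartz function of $(|\xi|_{\wt g}\mp\lambda)/A$ scaled by $A^{-(j+1)}$, while the smooth tempered residuals $f_\nu$ contribute $O(\lambda^{-\infty})$. In every case the problem reduces to a sum over $\nu\le N$ and $\gamma\in\Gamma$ of oscillatory $\xi$-integrals whose amplitudes are concentrated in a shell of thickness $\sim A=1/(c\log\lambda)$ around $|\xi|_{\wt g}=\lambda$.

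Next I would isolate the leading contribution from $(\nu,\gamma)=(0,\mathrm{Id})$. Under the local identification of a normal-coordinate neighborhood of $\wt x$ with one of $x$, this term equals $u_0(x,y)(2\pi)^{-n}\int_{T_x^*M}e^{i\langle\exp_x^{-1}(y),\xi\rangle_g}\bigl(\mathds{1}_{[-\lambda,\lambda]}(|\xi|_g)-h_{\lambda,A}(|\xi|_g)\bigr)\frac{d\xi}{\sqrt{\det g_x}}$. Rescaling $\xi=\lambda\eta$ puts the $\mathds{1}$-piece into the form $u_0(x,y)$ times the proposed main term of \eqref{parametrix_estimate_eqn}. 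Since $u_0(x,x)=1$, $u_0(x,y)-1=O(d_g(x,y)^2)$, and all derivatives of $u_0$ are uniformly bounded by \lemref{u_nu_bound}, combining this with the Bessel-type bound $|\partial_x^\alpha\partial_y^\beta J|\lesssim\lambda^{(n-1)/2+|\alpha|+|\beta|}d_g(x,y)^{-(n+1)/2}$ for the main integral $J$ in the oscillatory regime $\lambda d_g(x,y)\gg 1$ (and the trivial $|\partial^kJ|\lesssim \lambda^{n+k}$ when $\lambda d_g(x,y)\lesssim 1$) yields $|\partial_x^\alpha\partial_y^\beta((u_0-1)J)|=O(\lambda^{n-2+|\alpha|+|\beta|})$ uniformly on $\{d_g(x,y)\le\tfrac12\inj(M,g)\}$. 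The $h_{\lambda,A}$-piece is handled exactly as in the proof of \propref{exact_kernel_remainder} and contributes $O(A\lambda^{n-1+|\alpha|+|\beta|})=O(\lambda^{n-1+|\alpha|+|\beta|}/\log\lambda)$.

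For the remaining terms I would pass to polar coordinates $\xi=r\omega$ with $r=|\xi|_{\wt g}$ and use the standard Bessel/stationary-phase bound $\bigl|\int_{S^{n-1}}e^{ir\langle v,\omega\rangle}\,d\omega\bigr|\le C(1+r|v|)^{-(n-1)/2}$. For $\gamma=\mathrm{Id},\nu\ge 1$, the radial weight $|\xi|_{\wt g}^{-\nu-k}$ together with the factor $A^{-(j+1)}$ (with $j+k=\nu-1$), after integrating $r$ over a shell of width $A$ near $\lambda$, produces contributions of size $O((\log\lambda)^{\nu-1}\lambda^{n-1-\nu+|\alpha|+|\beta|})$, strictly smaller than the target for each $\nu\ge 1$. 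For $\gamma\ne\mathrm{Id}$ the key geometric input is that $d_{\wt g}(\wt x,\gamma\wt y)$ is bounded below by a positive constant depending only on $(M,g)$, so the angular Bessel bound supplies full decay $\lambda^{-(n-1)/2}$; a single such term then contributes at most $O(A\lambda^{(n-1)/2+|\alpha|+|\beta|}\cdot d_{\wt g}(\wt x,\gamma\wt y)^{-(n-1)/2})$, with extra factors arising from \lemref{u_nu_bound} on $u_\nu$ and \lemref{exp_map} on derivatives of the phase $\langle\exp_{\wt x}^{-1}(\gamma\wt y),\xi\rangle_{\wt g}$ contributing at most exponential growth in $d_{\wt g}(\wt x,\gamma\wt y)$.

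The main obstacle is the $\gamma$-sum: the count estimate \eqref{gamma_count} allows up to $C_1 e^{C_2/A}=C_1\lambda^{cC_2}$ terms once one restricts to $d_{\wt g}(\wt x,\gamma\wt y)\le 1/A$ (forced by $\supp\wh\rho(At)\subset[-1/A,1/A]$ and finite propagation speed). A shell decomposition in $d_{\wt g}(\wt x,\gamma\wt y)$, analogous to the $k$-interval argument in the proof of \propref{exact_kernel_remainder} but with exponentially weighted shells, combined with the at-most-exponential geometric bounds from Appendix \ref{geometric_estimates}, shows that the total $\gamma\ne\mathrm{Id}$ contribution is bounded by $C\lambda^{cC_0}\cdot A\lambda^{(n-1)/2+|\alpha|+|\beta|}$ for some $C_0=C_0(M,g,\alpha,\beta,N)$. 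Choosing $c$ small enough that $\lambda^{cC_0}\le\lambda^{(n-1)/2}$ then produces an overall bound of $O(A\lambda^{n-1+|\alpha|+|\beta|})=O(\lambda^{n-1+|\alpha|+|\beta|}/\log\lambda)$, finishing the estimate for $R_{N,A}$.
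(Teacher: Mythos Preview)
Your strategy is correct and parallels the paper's closely: both arguments split the double sum over $(\nu,\gamma)$ into the leading piece $(\nu,\gamma)=(0,\mathrm{Id})$, the near-diagonal pieces $\gamma=\mathrm{Id}$ with $\nu\ge1$, and the off-diagonal pieces $\gamma\ne\mathrm{Id}$, and both rely on the same geometric inputs (\lemref{u_nu_bound}, \lemref{exp_map}, and \eqref{gamma_count}). The organizational difference is that the paper keeps the $t$-integral and applies stationary phase jointly in $(t,r)$ (Lemmas~\ref{leading_term_mu}--\ref{off_diagonal}), whereas you evaluate the $t$-integral first in closed form---producing $\mathds{1}_{[-\lambda,\lambda]}(|\xi|_{\wt g})-h_{\lambda,A}(|\xi|_{\wt g})$ for $\nu=0$ and shell-localized Schwartz amplitudes for $\nu\ge1$---and then estimate the remaining $\xi$-integral by Bessel bounds in polar coordinates. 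Your route is slightly more direct and sidesteps the need to verify uniformity in $A$ of the $(t,r)$-stationary-phase expansion.

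One correction is needed. For $\gamma\ne\mathrm{Id}$ and $\nu=0$, the $\mathds{1}_{[-\lambda,\lambda]}$-piece of your amplitude is \emph{not} shell-localized, so a single such term is only $O\big(\lambda^{(n-1)/2+|\alpha|+|\beta|}\big)$ (times the exponential factors from Appendix~\ref{geometric_estimates}), not $O\big(A\lambda^{(n-1)/2+|\alpha|+|\beta|}\big)$ as you claim. This is harmless: summing over the at most $O(\lambda^{cC_0})$ nonzero $\gamma$ still gives $O\big(\lambda^{(n-1)/2+|\alpha|+|\beta|+cC_0}\big)$, which for $c$ small enough is $o\big(\lambda^{n-1+|\alpha|+|\beta|}/\log\lambda\big)$---indeed this is exactly what the paper's \lemref{off_diagonal} yields at $\nu=0$. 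Similarly, your bound $|\partial_x^\alpha\partial_y^\beta((u_0-1)J)|=O(\lambda^{n-2+|\alpha|+|\beta|})$ is slightly too optimistic when $n=2$ (the correct bound there is $O(\lambda^{1/2+|\alpha|+|\beta|})$, coming from large $d_g$ in the oscillatory regime), but this is again well below the target.
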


Recalling the definition of $K_N$ from \eqref{parametrix_partial_sums}, we have that the left-hand side of \eqref{parametrix_estimate_eqn} can be written as  
\begin{equation}\label{double_sum}
\sum\limits_{\gamma\in\Gamma}\sum\limits_{\nu = 0}^N \pi^{-1}u_\nu(\wt x,\gamma\wt y)\int\limits_{-\infty}^\infty\wh\rho(At) \frac{\sin t\lambda}{t}\partial_t W_\nu(t,d_{\wt g}(\wt x,\gamma\wt y))\,dt,
\end{equation}
for any choice of lifts $\wt x,\wt y\in\wt M$ of $x,y\in M.$ To prove \propref{parametrix_estimate}, we show that as long as $d_g(x,y)$ is small enough, there is one term in the above summation which contributes the leading order asymptotics, and the rest are smaller than the claimed remainder bound. In particular, the leading term will be the one for which $\nu = 0$ and $d_{\wt g}(\wt x,\gamma\wt y) = d_g(x,y)$. The following lema demonstrates that when $x$ and $y$ are close enough together, this occurs for a unique $\gamma$, and that by choosing the lifts $\wt x,\wt y$ properly, we may assume that this occurs exactly when $\gamma = \Id.$ 

\begin{lem}\label{gammalemma}
Let $x,y\in M$ with $d_g(x,y)  \le \frac{1}{2}\inj(M,g).$ and fix a lift $\wt x\in\wt M$ of $x.$ Then, there exists a unique lift $\wt y\in \wt M$ for which $d_{\wt g}(\wt x,\wt y) = d_g(x,y)$. Additionally, if $\gamma$ is a nonidentity element of the deck transformation group, then $d_{\wt g}(\wt x,\gamma \wt y) > \frac{1}{2}\inj(M,g)$. 
\end{lem}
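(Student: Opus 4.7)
The plan is to construct the unique lift $\wt y$ by lifting the unique minimizing geodesic from $x$ to $y$ in $M$, and then to rule out any deck translate of $\wt y$ being close to $\wt x$ by a uniqueness argument tied to the injectivity radius. The three standing tools I would rely on are: (i) $p=\exp_{x_0}:\wt M\to M$ is a Riemannian covering and hence a local isometry, so $d_g(a,b)\le d_{\wt g}(\wt a,\wt b)$ for any lifts $\wt a,\wt b$ of $a,b$; (ii) $(\wt M,\wt g)$ is uniquely geodesic because the absence of conjugate points combined with simple connectivity makes $\exp_{\wt x}$ a diffeomorphism for every $\wt x$; and (iii) the deck transformation group $\Gamma$ acts freely on $\wt M$ by isometries.

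For existence, since $d_g(x,y)\le \tfrac12 \inj(M,g) < \inj(M,g)$, there is a unique minimizing geodesic $\alpha:[0,d_g(x,y)]\to M$ from $x$ to $y$. I would lift $\alpha$ through the homotopy lifting property to a geodesic $\wt \alpha$ in $\wt M$ with $\wt\alpha(0)=\wt x$ and set $\wt y:=\wt\alpha(d_g(x,y))$. The curve $\wt\alpha$ has length $d_g(x,y)$, so $d_{\wt g}(\wt x,\wt y)\le d_g(x,y)$, while (i) gives the reverse inequality. For uniqueness, suppose $\wt y'$ is another lift of $y$ with $d_{\wt g}(\wt x,\wt y')=d_g(x,y)$. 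By (ii) there is a unique minimizing $\wt\alpha'$ from $\wt x$ to $\wt y'$; its projection is a geodesic in $M$ from $x$ to $y$ of length $d_g(x,y)$, which by uniqueness downstairs must equal $\alpha$. Uniqueness of path lifts then forces $\wt\alpha'=\wt\alpha$, hence $\wt y'=\wt y$.

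For the separation statement I would argue by contradiction: suppose $\gamma\in\Gamma\setminus\{\mathrm{Id}\}$ satisfies $d_{\wt g}(\wt x,\gamma \wt y)\le \tfrac12\inj(M,g)$. Since $p\circ\gamma=p$, the point $\gamma\wt y$ is another lift of $y$. The unique minimizing geodesic from $\wt x$ to $\gamma\wt y$ has length strictly less than $\inj(M,g)$, so it projects to a geodesic in $M$ from $x$ to $y$ of the same length. Because $\exp_x$ is injective on the ball of radius $\inj(M,g)$, any geodesic from $x$ to $y$ of length below the injectivity radius must be the unique minimizer, so in fact $d_{\wt g}(\wt x,\gamma\wt y)=d_g(x,y)$. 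The uniqueness half of the first part then yields $\gamma\wt y=\wt y$, contradicting the freeness of the $\Gamma$-action.

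The main obstacle, such as it is, is the bookkeeping needed to pass cleanly between geodesics upstairs and downstairs; once the Riemannian covering structure, the Cartan--Hadamard-type diffeomorphism $\exp_{\wt x}:T_{\wt x}\wt M\to\wt M$, and the injectivity-radius characterization of minimizers in $M$ are all in hand, each step is essentially forced.
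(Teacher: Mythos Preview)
Your proof is correct. Both arguments are short and elementary, but they proceed along genuinely different lines. The paper does not lift geodesics or invoke uniqueness of path lifts at all; instead it observes that for any $\gamma\ne\mathrm{Id}$ the displacement $d_{\wt g}(\wt y,\gamma\wt y)$ equals the length of a nontrivial closed geodesic through $y$ in $M$, hence exceeds $\inj(M,g)$, and then the triangle inequality $d_{\wt g}(\wt y,\gamma\wt y)\le d_g(x,y)+d_{\wt g}(\wt x,\gamma\wt y)$ together with $d_g(x,y)\le\tfrac12\inj(M,g)$ yields the separation bound directly. Uniqueness then falls out as a corollary of separation rather than being proved first. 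Your approach inverts this: you establish uniqueness via path-lifting and the injectivity of $\exp_x$ on the ball of radius $\inj(M,g)$, and then derive separation by contradiction from uniqueness and freeness of the $\Gamma$-action. The paper's argument is slightly more efficient in that one inequality chain handles both conclusions, while yours makes the covering-space machinery more explicit; either is perfectly adequate here.
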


\begin{proof}
The existence of such a lift $\wt y$ follows immediately from the fact that $p$ is a local isometry in a ball of radius $\frac{1}{2}\inj(M,g)$ around $\wt x$. To show uniqueness, let $x,y, \wt y$ be as above, and note that any other lift of $y$ must be of the form $\gamma\wt y$ for some $\gamma \ne \Id.$ Then $d_{\wt g}(\wt y,\gamma \wt y)$ is the length of a nontrivial closed geodesic in $M$ starting and ending at $y$. Since $M$ is compact, there exists a positive minimum of the lengths of such geodesics which is independent of $y$. In fact, we have that $0 < \inj(M,g) < d_{\wt g}( \wt y,\gamma\wt y)$. Thus, by the triangle inequality, we have
\[0 < \inj(M,g) \le d_{\wt g}(\wt y,\gamma\wt y) \le d_{\wt g}(\wt y,\wt x) + d_{\wt g}(\wt x,\gamma\wt y) = d_g(x,y) + d_{\wt g}(\wt x,\gamma\wt y),\]
since $d_{\wt g}(\wt x,\wt y) = d_g(x,y)$. Using that $d_g(x,y) \le \frac{1}{2}\inj(M,g)$, we have
\[0 < d_g(x,y) \le \frac{1}{2}\inj(M,g) < d_{\wt g}(\wt x,\gamma\wt y),\]
 which demonstrates that $d_{\wt g}(\wt x,\gamma \wt y) \ne d_g(x,y)$, and also verifies the claimed lower bound on $d_{\wt g}(\wt x,\gamma \wt y)$.
\end{proof}

Next, we obtain the asymptotics of the term in \eqref{double_sum}, where $\nu = 0$ and $\gamma = \Id$. Recalling \eqref{zero_term_cosine} and \eqref{parametrix_coeffs}, this term is given by
\begin{equation}\label{leading_term_theta}
\frac{1}{\pi(2\pi)^{n}}\Theta^{-\frac{1}{2}}(x,y)\int\limits_{-\infty}^\infty\int\limits_{T_x^*M}e^{i\langle\exp_x^{-1}(y),\xi\rangle_{g}}\wh\rho(At)\frac{\sin t\lambda}{t}\cos\lp t|\xi|_{g}\rp\frac{d\xi\,dt}{\sqrt{\det g_x}},
\end{equation}
where we can use $x,y\in M$ instead of their lifts in $\wt M$ since $p$ is an isometry in a neighborhood containing $\wt x,\wt y.$ We seek to show that this term contributes the leading order behavior in \eqref{parametrix_estimate_eqn}. To accomplish this, we first study the behavior of its derivative with respect to $\lambda$, since it is more straightforward to study and will prove useful in later arguments. 

\begin{lem}\label{leading_term_mu}
Fix $\wh\rho$ as in \propref{exact_kernel_remainder}. Then for any $0 < A < 1$, we have
\begin{align}\label{leading_term_mu_eqn}
\begin{split}
&\frac{1}{\pi(2\pi)^n}\int\limits_{-\infty}^\infty\int\limits_{T_x^*M}e^{i\langle\exp_x^{-1}(y),\xi\rangle_{g}}\wh\rho(At)\cos(t\mu)\cos(t|\xi|_{g_x^{-1}})\frac{d\xi\,dt}{\sqrt{\det g_x}} \\
& \hspace{0.5in}= \frac{\mu^{n-1}}{(2\pi)^n}\int\limits_{S_x^*M}e^{i\mu\langle\exp_x^{-1}(y),\xi\rangle_{g}}\frac{d\xi}{\sqrt{\det g_x}} + R_A(x,y,\mu),
\end{split}
\end{align}
where $S_x^*M$ is the co-sphere fiber at $x$, and 
\[\sup\limits_{d_g(x,y)\le \frac{1}{2}\inj(M,g)}\left|\partial_x^\alpha\partial_y^\beta R_A(x,y,\mu)\right| = \mathcal O\lp \mu^{n-3+|\alpha|+|\beta|}\rp\]
uniformly in $A$.
 
\end{lem}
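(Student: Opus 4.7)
The plan is to perform the $t$-integration exactly via Fourier inversion and then exploit a crucial cancellation arising from the hypothesis that $\wh\rho$ is identically one in a neighborhood of the origin.

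First, I would apply the product-to-sum identity $\cos(t\mu)\cos(t|\xi|_{g}) = \tfrac{1}{2}\cos(t(\mu-|\xi|_{g})) + \tfrac{1}{2}\cos(t(\mu+|\xi|_{g}))$, together with the Fourier identity $\int_{-\infty}^\infty \wh\rho(At)\cos(t\tau)\,dt = (2\pi/A)\rho(\tau/A)$, where $\rho = \mathcal{F}^{-1}\wh\rho$ is even and Schwartz. This converts the left-hand side of \eqref{leading_term_mu_eqn} into
\[
\frac{1}{A(2\pi)^n}\int_{T_x^*M} e^{i\langle \exp_x^{-1}(y),\xi\rangle_{g}}\bigl[\rho((\mu-|\xi|_{g})/A) + \rho((\mu+|\xi|_{g})/A)\bigr]\frac{d\xi}{\sqrt{\det g_x}}.
\]
The term involving $\rho((\mu+|\xi|_{g})/A)$ is negligible: since its argument is bounded below by $\mu/A$ on the support, Schwartz decay of $\rho$ forces the full contribution (along with all its $x,y$ derivatives) to be $\mathcal O(\mu^{-N})$ for every $N$.

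For the remaining main term I would pass to co-metric polar coordinates $\xi = r\omega$ with $\omega\in S_x^*M$ and $d\xi/\sqrt{\det g_x} = r^{n-1}\,dr\,d\omega$. Setting $a(\omega):=\langle\exp_x^{-1}(y),\omega\rangle_{g}$ and substituting $s = (r-\mu)/A$, the integral takes the form
\[
\frac{1}{(2\pi)^n}\int_{S_x^*M} e^{i\mu a(\omega)}\int_{-\mu/A}^\infty e^{iA s\, a(\omega)}\rho(s)(\mu+As)^{n-1}\,ds\,d\omega.
\]
Expanding $(\mu+As)^{n-1}$ by the binomial theorem and using $\int_{-\infty}^\infty s^k\rho(s)e^{iAs\,a}\,ds = i^k \wh\rho^{(k)}(-Aa)$, the inner integral over the full real line equals $\sum_{k=0}^{n-1}\binom{n-1}{k}\mu^{n-1-k}A^k\, i^k\wh\rho^{(k)}(-Aa)$. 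Here is the key cancellation: since $\wh\rho\equiv 1$ on $(-\tfrac{1}{2}\inj(M,g), \tfrac{1}{2}\inj(M,g))$ and $|Aa|\le d_g(x,y)\le \tfrac{1}{2}\inj(M,g)$ (using $A\le 1$ and Cauchy-Schwarz in the co-metric), it follows that $\wh\rho(-Aa)=1$ while $\wh\rho^{(k)}(-Aa)=0$ for every $k\ge 1$. Only the $k=0$ term survives, yielding $\mu^{n-1}$ exactly. Restoring the cutoff at $-\mu/A$ introduces a tail $\int_{-\infty}^{-\mu/A}$ whose integrand has polynomial growth in $s$ dominated by the Schwartz decay of $\rho$, and so contributes $\mathcal O(\mu^{-N})$ for every $N$. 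This establishes the lemma in the $|\alpha|=|\beta|=0$ case with error much smaller than the claimed $\mathcal O(\mu^{n-3})$.

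For the derivative case, both sides depend smoothly on $(x,y)$, so I would differentiate under the integral sign. Each derivative in $x$ or $y$ produces at most one power of $r$ (from the phase $e^{ir a(\omega)}$) along with smooth bounded coefficients stemming from derivatives of $\exp_x^{-1}(y)$, the co-metric $g_x^{-1}$, and the measure on $S_x^*M$. The integrand becomes $e^{ir a(\omega)}\rho((\mu-r)/A)$ times a polynomial in $r$ of degree at most $n-1+|\alpha|+|\beta|$ with smooth bounded coefficients, and the identical Fourier-moment argument applies term by term. This produces a main term of order $\mu^{n-1+|\alpha|+|\beta|}$ that matches $\partial_x^\alpha\partial_y^\beta$ of the right-hand side of \eqref{leading_term_mu_eqn}, together with Schwartz tail errors of order $\mu^{|\alpha|+|\beta|-N}$, which are absorbed into $\mathcal O(\mu^{n-3+|\alpha|+|\beta|})$ by taking $N$ large.

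The main obstacle to anticipate is that a naive Taylor expansion in the small parameter $A$ would yield errors of the form $A^2\mu^{n-1}\cdot(\text{bounded})$, which fail to be $\mathcal O(\mu^{n-3})$ uniformly in $A\in(0,1)$. The crucial observation that rescues the argument is that the relevant moments of $\rho$ against the twisted exponential $e^{iAs\,a}$ are exactly derivatives of $\wh\rho$ evaluated at a point inside the flat region where $\wh\rho\equiv 1$; they therefore vanish identically and leave only the rapidly decaying Schwartz tails, which are well within the required remainder bound uniformly in $A$.
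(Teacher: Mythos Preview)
Your argument is correct and takes a genuinely different route from the paper. The paper first uses the decomposition $\int_{S_x^*M}e^{i\mu r\langle\exp_x^{-1}(y),\omega\rangle_g}\,d\sigma_x = \sum_\pm e^{\pm i\mu r d_g(x,y)}a_\pm(\mu r\exp_x^{-1}(y))$ and then applies two-variable stationary phase in $(t,r)$ to the resulting oscillatory integral, obtaining the leading term plus an $\mathcal O(\mu^{n-3})$ remainder. You instead perform the $t$-integral exactly by Fourier inversion and then exploit, via the binomial expansion, the identity $\int s^k\rho(s)e^{iAs a(\omega)}\,ds = i^{-k}\wh\rho^{(k)}(Aa(\omega))$, which vanishes for all $k\ge 1$ because $|Aa(\omega)|\le A\,d_g(x,y) < \tfrac{1}{2}\inj(M,g)$ lands in the flat region of $\wh\rho$. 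This is more elementary (no stationary phase, no $a_\pm$ asymptotics), and it actually yields a much stronger remainder $\mathcal O(\mu^{-N})$ for every $N$, uniformly in $A$. The paper's approach, however, buys something else: the $a_\pm$ decomposition produces the factor $(1+\mu d_g(x,y))^{-\frac{n-1}{2}}$, and the whole stationary-phase template is reused verbatim in the subsequent Lemmas~\ref{leading_term_lambda}--\ref{off_diagonal}, where the integrands carry extra powers of $t$ and $|\xi|^{-1}$ and your exact Fourier-moment trick no longer applies so cleanly. One small point to tighten in your derivative step: rather than re-matching the differentiated main term to $\partial_x^\alpha\partial_y^\beta$ of the right-hand side, it is cleaner to observe that $R_A$ equals the explicit Schwartz tail $-\int_{-\infty}^{-\mu/A}(\cdots)\,ds$ plus the $\rho((\mu+|\xi|)/A)$ piece, and to differentiate those tails directly; each $x,y$ derivative contributes at worst a factor of $\mu$ or $|s|$, both harmless against the Schwartz decay of $\rho$.
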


\begin{proof}
For this we argue in close analogy to the proof of \cite[Proposition 12]{Canzani2015}, although we must be cautious about the dependence on $A$ throughout the argument. Let us write the left hand side of \eqref{leading_term_mu_eqn} as
\[\frac{1}{\pi(2\pi)^n}\int\limits_{-\infty}^\infty\int\limits_{T_x^*M} e^{i\langle \exp_x^{-1}(y),\xi\rangle_g}\wh\rho(At)\cos(t\mu)\cos(t|\xi|_g)\frac{d\xi\,dt}{\sqrt{\det g_x}}.\]
Using that $\cos(a)\cos(b) = \frac{1}{2}\lp \cos(a+b) + \cos(a-b)\rp$ and $\wh\rho$ is even, we can write the above as 
\[\frac{1}{(2\pi)^{n+1}}\int\limits_{-\infty}^\infty\int\limits_{T_x^*M} e^{i\langle \exp_x^{-1}(y),\xi\rangle_g}\lp e^{it(\mu - |\xi|_g)} + e^{it(\mu +|\xi|_g)}\rp\wh\rho(At)\frac{d\xi\,dt}{\sqrt{\det g_x}}.\]
We will concern ourselves only with the term involving $e^{ it(\mu - |\xi|_g)}$, because it can be seen by repeating the following argument that the other term yields only rapidly decreasing terms in $\mu,$ due to the fact that the phase is nonstationary for $\mu > 0.$ Making the change of variables $\xi = \mu r \omega$ for $r\in\R^+$ and $\omega \in S^*_xM$, it suffices to estimate
\begin{equation}\label{polar_coords}
\frac{\mu^n}{(2\pi)^{n+1}}\int\limits_{-\infty}^\infty\int\limits_0^\infty \int\limits_{S_x^*M} e^{i\mu r\langle\exp_x^{-1}(y),\omega\rangle_g+ it\mu(1-r)}\wh\rho(At)r^{n-1}\,d\sigma_x(\omega)\,dr\,dt,
\end{equation}
where $d\sigma_x$ is the induced surface measure on $S_x^*M$. By \cite[Theorem 1.2.1]{SoggeBook2017}, we can write
\begin{equation}\label{spherical_decomp}
\int\limits_{S_x^*M}e^{i\mu r\langle\exp_x^{-1}(y),\omega\rangle_g}\,d\sigma_x(\omega) = \sum\limits_{\pm}e^{\pm i\mu r d_g(x,y)}a_{\pm}(\mu r\exp_x^{-1}(y)),
\end{equation}
where $|\partial^\alpha a_\pm(\zeta)|\le C(1 + |\zeta|)^{-\frac{n-1}{2}-|\alpha|}.$ Hence, \eqref{polar_coords} can be expressed as
\begin{equation}\label{polar_coords_decomp}
\sum\limits_{\pm}\frac{\mu^n}{(2\pi)^{n+1}}\int\limits_{-\infty}^\infty\int\limits_0^\infty e^{i\mu \psi_\pm(x,y,t,r)}a_\pm(\mu r \exp_x^{-1}(y))\wh\rho(At)r^{n-1}\,dr\,dt,
\end{equation}
where $\psi_\pm(x,y,t,r) = t(1-r)\pm rd_g(x,y)$. Motivated by the form of this phase function, we introduce a cutoff $\beta\in C_c^\infty(\R^+)$ with $\beta\equiv 1$ on small neighborhood of $r = 1$ and supported in $\lp\frac{1}{2},\frac{3}{2}\rp$ . We then have that \eqref{polar_coords_decomp} equals
\begin{equation}\label{polar_coords_cutoff}
 \sum\limits_{\pm}\frac{\mu^n}{\pi(2\pi)^{n}}\!\!\!\int\limits_{-\infty}^\infty\int\limits_0^\infty e^{i\mu \psi_\pm(x,y,t,r)}a_\pm(\mu r \exp_x^{-1}(y))\wh\rho(At)r^{n-1}\beta(r)\,dr\,dt \!+\! \mathcal O\lp\!\mu^{-N}\!\!\rp
\end{equation}
for any $N > 2n-1$, uniformly in $0 < A \le 1$ and all $x,y\in M$. To see that the remainder is $\mathcal O(\mu^{-N})$, note that if we introduce a factor of $1 - \beta(r)$ in \eqref{polar_coords_decomp}, we can integrate by parts arbitrarily many times in $t$ using the operator $\frac{1}{\mu(1-r)}\partial_t$, which is well defined on the support of $\beta$. This results in an expression of the form
\begin{equation}\label{rapid_decay}
\frac{(-1)^NA^N}{\mu^N}\int\limits_0^\infty e^{\pm i \mu r d_g(x,y)}(1-r)^{-N}r^{n-1}(1-\beta(r))\int\limits_{-\infty}^\infty e^{it(1-r)}\wh\rho^{(N)}(At)\,dt\,dr.
\end{equation}
Since $\wh\rho^{(N)}(At)$ vanishes for $|t|\ge L/A$, we have that \eqref{rapid_decay} is bounded in absolute value by a constant times $A^{N-1}\mu^{-N}$, provided that $N>2n-1$ so that the integral in the $r$ variable is absolutely convergent. Recalling that $A \le 1$ shows that the asymptotic in \eqref{polar_coords_cutoff} is uniform with respect to $A$.

Next, we seek to apply stationary phase to the first term in \eqref{polar_coords_cutoff} (see \cite[Thm 3.16]{ZworskiBook2012} and \cite{Fedoryuk1990} ). For this we set 
\[b_A^{\pm}(t,r,x,y,\mu) = a_{\pm}(\mu r\exp_x^{-1}(y))\wh\rho(At)r^{n-1}\beta(r)\]
 and note that the phase functions $\psi_{\pm}$ each have a unique critical point at  $(t_0^\pm,r_0^\pm) = (\pm d_g(x,y),1)$. Therefore, we have that the first term in \eqref{polar_coords_cutoff} equals
\begin{align}
\begin{split}\label{statphase}
&\frac{\mu^{n-1}}{(2\pi)^{n}}e^{\pm i\mu d_g(x,y)}\sum\limits_{\pm} \bigg(b_A^\pm(t_0^{\pm},r_0^{\pm},x,y,\mu) + \frac{1}{i\mu}\partial_t\partial_r b_A^\pm(t_0^\pm,r_0^\pm,x,y,\mu)\bigg) \\
& \hspace{.5in} \! +\!\frac{\mu^{n-3}}{(2\pi)^{n}}e^{\pm i\mu d_g(x,y)}\sum\limits_{\pm} F_A^\pm(x,y,\mu),
\end{split}\end{align}
where 
\[|F_A^\pm(x,y,\mu)| \le \sup\limits_{k+\ell \le 7}\sup\limits_{(t,r)\in\supp b_A^\pm}\left|\partial_t^k\partial_r^\ell b_A^\pm(t,r,x,y,\mu)\right| \le C(1 + \mu d_g(x,y))^{-\frac{n-1}{2}},\]
with $C$ independent of $A$ by our estimates on $a_\pm$, the fact that $\wh\rho$ is uniformly bounded, and the fact that $\beta$ is supported where $r \approx 1$. For $d_{g}(x,y) \le \frac{1}{2}\text{inj}(M,g)$ and $A \le 1$, we have that $\wh\rho(Ad_g(x,y)) = 1$ and $\partial_t\wh\rho(Ad_g(x,y)) = 0$, and hence we see that \eqref{statphase} is equal to 
\begin{align*}
& \frac{\mu^{n-1}}{(2\pi)^{n}}\sum\limits_{\pm}e^{\pm i \mu d_g(x,y)}a_\pm(\mu\exp_x^{-1}(y)) +  \mathcal O\lp \mu^{n-3}\rp\\
& \hskip 0.3in = \frac{\mu^{n-1}}{(2\pi)^{n}}\int\limits_{S_x^*M}e^{i\mu\langle\exp_x^{-1}(y),\omega\rangle_{g}}\,d\sigma_x(\omega) + \mathcal O(\mu^{n-3}),
\end{align*}
after recalling the decomposition \eqref{spherical_decomp}. This completes the proof in the case where we take no derivatives of the remainder. To include derivatives, we note that the dependence on $x,y$ in \eqref{polar_coords} only appears in the quantity
\[\int\limits_{S_x^*M}e^{i\mu r\langle\exp_x^{-1}(y),\omega\rangle_g}\,d\sigma_x,\]
and hence each differentiation in $x$ or $y$ yields at most one additional power of $\mu$ in the asymptotic expansion. More precisely, by the linear change of variables $\theta = g_x^{-1/2}\omega$, we have
\[\int\limits_{S_x^*M}e^{i\mu r\langle\exp_x^{-1}(y),\omega\rangle_g}\,d\sigma_x(\omega) = \int\limits_{\mathbb S^{n-1}}e^{i\mu r\langle g_x^{-1/2}\exp_x^{-1}(y),\theta\rangle_{\R^n}}\,dS(\theta),\]
where $dS$ is the surface measure on the round sphere $\mathbb S^{n-1}\subset \R^n,$ and so the dependence on $x,y$ only appears in the exponent. Therefore, applying $\partial_x^\alpha\partial_y^\beta$ yields a finite linear combination of terms of the form
\[(i\mu r)^kf(x,y)\int\limits_{S_x^*M}e^{i\mu r\langle\exp_x^{-1}(y),\omega\rangle_g}h(\omega)\,d\sigma_x(\omega)\]
for $k \le |\alpha|+|\beta|$ and some smooth, bounded functions $f,h.$ Repeating the preceding argument on each of these terms yields the desired result. 

\end{proof}

If it were not for the factor of $\Theta^{-\frac{1}{2}}$ which appears in the $\nu = 0$ term of \eqref{leading_term_theta}, we could simply integrate \eqref{leading_term_mu_eqn} with respect to $\mu$ to obtain the leading term in \eqref{parametrix_estimate} with a remainder bounded by $\mathcal O(\lambda^{n-2 +|\alpha|+|\beta|})$. The following lemma handles this difficulty at the expense of weakening the remainder bound.

\begin{lem}\label{leading_term_lambda}
For $\wh\rho$ as in \propref{exact_kernel_remainder}, there exist constants $c,C,\lambda_0 > 0$ such that if $A = \frac{1}{c\log\lambda}$, then
\begin{align}\label{leading_term}
\begin{split}
&\frac{\Theta^{-\frac{1}{2}}(x,y)}{(2\pi)^n}\int\limits_{-\infty}^\infty \int\limits_{T_x^*M}e^{i\langle\exp_x^{-1}(y),\xi\rangle_{g}}\wh\rho(At)\frac{\sin t\lambda}{t}\cos(t|\xi|_{g})\frac{d\xi\,dt}{\sqrt{\det g_x}}\\
&\hskip 0.5in = \frac{\lambda^n}{(2\pi)^n}\int\limits_{B_x^*M}e^{i\lambda\langle \exp_x^{-1}(y),\xi\rangle_{g}}\frac{d\xi}{\sqrt{\det g_x}} + R_A(x,y,\lambda),
\end{split}
\end{align}
where 
\[\sup\limits_{d_{g}(x,y) \le \frac{1}{2}\textnormal{inj}(M,g)}\left|\partial_x^\alpha\partial_y^\beta R_A(x,y,\lambda)\right| \le  \frac{C\lambda^{n-1+|\alpha|+|\beta|}}{\log\lambda}\]
for all $\lambda \ge \lambda_0.$ 
\end{lem}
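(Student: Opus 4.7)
The plan is to reduce the statement to Lemma \ref{leading_term_mu} by representing $\sin(t\lambda)/t$ as the $\mu$-antiderivative of $\cos(t\mu)$, and then to absorb the remaining Hadamard factor $\Theta^{-1/2}(x,y)$ using the fact that it equals $1$ on the diagonal.

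First I would write $\sin(t\lambda)/t = \int_0^\lambda \cos(t\mu)\,d\mu$ and invoke Fubini. Since $\Theta^{-1/2}(x,y)$ is independent of $t$, $\xi$, and $\mu$, it pulls out of both integrations, and the inner double integral is, up to a constant, exactly the left-hand side of \eqref{leading_term_mu_eqn}. Lemma \ref{leading_term_mu} applied for each $\mu\in[0,\lambda]$ produces the main term $\frac{\mu^{n-1}}{(2\pi)^n}\int_{S_x^*M}e^{i\mu\langle\exp_x^{-1}(y),\omega\rangle_g}\frac{d\sigma_x(\omega)}{\sqrt{\det g_x}}$ plus a remainder of size $\mathcal{O}(\mu^{n-3+|\alpha|+|\beta|})$. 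Integrating the main term in $\mu$ and switching to polar coordinates $\xi=\mu\omega$ collapses it into $\frac{\lambda^n}{(2\pi)^n}\int_{B_x^*M}e^{i\lambda\langle\exp_x^{-1}(y),\xi\rangle_g}\frac{d\xi}{\sqrt{\det g_x}}$, which is exactly the leading term in \eqref{leading_term}. Integrating the remainder gives $\mathcal{O}(\lambda^{n-2+|\alpha|+|\beta|})$, well within the claimed budget.

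The real task is then to control
\[
\bigl(\Theta^{-1/2}(x,y)-1\bigr)M(x,y,\lambda), \qquad M(x,y,\lambda):=\frac{\lambda^n}{(2\pi)^n}\int_{B_x^*M}e^{i\lambda\langle\exp_x^{-1}(y),\xi\rangle_g}\frac{d\xi}{\sqrt{\det g_x}},
\]
together with its $\partial_x^\alpha\partial_y^\beta$ derivatives. I would rely on two ingredients. First, a Jacobi-field Taylor expansion in normal coordinates at $x$ gives $\Theta(x,y)=1-\tfrac{1}{6}\mathrm{Ric}_x(v,v)+\mathcal{O}(|v|^3)$ with $v=\exp_x^{-1}(y)$, so that
\[
\bigl|\partial_x^{\alpha_1}\partial_y^{\beta_1}\bigl(\Theta^{-1/2}(x,y)-1\bigr)\bigr| \le C\,d_g(x,y)^{\max(0,\,2-|\alpha_1|-|\beta_1|)}
\]
uniformly for $d_g(x,y)\le\tfrac{1}{2}\inj(M,g)$. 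Second, identifying $M$ (after linearizing the co-metric at $x$) with a rescaled Fourier transform of the indicator of the unit cotangent ball, the standard $J_{n/2}(r)/r^{n/2}$ Bessel asymptotics yield
\[
\bigl|\partial_x^{\alpha_2}\partial_y^{\beta_2}M(x,y,\lambda)\bigr| \le C\,\lambda^{n+|\alpha_2|+|\beta_2|}\bigl(1+\lambda d_g(x,y)\bigr)^{-(n+1)/2},
\]
each derivative bringing down at most one power of $\lambda$ from the phase since $\xi$ is bounded on $B_x^*M$.

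Combining these two estimates via the Leibniz rule and treating the regimes $\lambda d_g(x,y)\le 1$ and $\lambda d_g(x,y)>1$ separately, every resulting Leibniz term is at most $\mathcal{O}(\lambda^{n-2+|\alpha|+|\beta|})$ when $n\ge 3$ and at most $\mathcal{O}(\lambda^{(n-1)/2+|\alpha|+|\beta|})$ when $n=2$, both of which are dominated by $\lambda^{n-1+|\alpha|+|\beta|}/\log\lambda$ once $\lambda$ is large enough. The hardest part will be the Leibniz terms with $|\alpha_1|+|\beta_1|\ge 2$, where $\Theta^{-1/2}-1$ no longer contributes any $d_g$-smallness; there the two extra derivatives saved on $M$ are offset by two fewer powers of $\lambda$, and a short case analysis confirms the balance in every dimension $n\ge 2$. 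The constraint $A=1/(c\log\lambda)$ enters only through the hypothesis $A<1$ of Lemma \ref{leading_term_mu}, which is satisfied once $\lambda_0\ge e^{1/c}$.
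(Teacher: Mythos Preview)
Your approach is correct and follows the same overall architecture as the paper: integrate \lemref{leading_term_mu} in $\mu$ to produce the main term, then control $(\Theta^{-1/2}(x,y)-1)$ times the remaining quantity using the second-order vanishing of $\Theta^{-1/2}-1$ on the diagonal, together with a Leibniz analysis for the derivatives. The only substantive difference is in how you execute that second step. The paper keeps the full $t$-integral, uses the identity $d_g(x,y)^2 e^{i\langle\exp_x^{-1}(y),\xi\rangle_g}=\tfrac{1}{i}\langle\exp_x^{-1}(y),\nabla_\xi e^{i\langle\exp_x^{-1}(y),\xi\rangle_g}\rangle_g$ to integrate by parts in $\xi$, and then reapplies the stationary-phase machinery of \lemref{leading_term_mu}, obtaining a bound $d_g\,\lambda^{n-1}(1+\lambda d_g)^{-(n-1)/2}$ which it splits at $d_g=1/\log\lambda$. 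You instead work directly with the already-extracted main term $M(x,y,\lambda)$ and invoke the sharper Bessel decay $(1+\lambda d_g)^{-(n+1)/2}$ for the Fourier transform of the ball indicator, splitting at $\lambda d_g=1$.

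Your route is slightly more economical: it avoids a second pass through stationary phase, and because the ball gives one extra half-power of decay compared to the sphere, you actually obtain polynomial gains (of order $\lambda^{n-2+|\alpha|+|\beta|}$ or $\lambda^{(n-1)/2+|\alpha|+|\beta|}$) rather than merely the $\lambda^{n-1+|\alpha|+|\beta|}/\log\lambda$ that the lemma requires. One small point to make explicit in the write-up: when integrating the $\mathcal{O}(\mu^{n-3+|\alpha|+|\beta|})$ remainder from \lemref{leading_term_mu} over $[0,\lambda]$, the bound is only asserted for $\mu$ large, so handle $\mu\in[0,1]$ by the trivial boundedness of the integrand; this matters in the borderline case $n=2$, $|\alpha|=|\beta|=0$, where the integrated remainder is $\mathcal{O}(\log\lambda)$ rather than $\mathcal{O}(\lambda^{n-2})$, but still well within budget.
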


\begin{proof}
We first handle the case where $|\alpha|= |\beta| = 0$. Since the differential of $\Theta^{-\frac{1}{2}}$ vanishes at $( x, x)\in M\times M$, we know that
\[
\Theta^{-\frac{1}{2}}(x,y) = 1 + d_g(x,y)^2f(x,y)
\]
for some smooth, bounded function $f$. Thus, we need only show that 
\begin{equation}\label{d_squared}
d_g(x,y)^2\!\!\!\!\int\limits_{-\infty}^\infty \int\limits_{T_x^*M}\!\!\!\! e^{i\langle\exp_x^{-1}(y),\xi\rangle_{g}}\wh\rho(At)\frac{\sin t\lambda}{t}\cos(t|\xi|_{g})\frac{d\xi\,dt}{\sqrt{\det g_x}} = \mathcal O\lp\frac{\lambda^{n-1}}{\log\lambda}\rp,
\end{equation}
since we can integrate \eqref{leading_term_mu_eqn} with respect to $\mu$ from 0 to $\lambda$ to obtain the claimed leading order term with an $\mathcal O(\lambda^{n-2})$ error. Observe that 
\[d_g(x,y)^2 e^{i\langle\exp_x^{-1}(y),\xi\rangle_{g}} = \frac{1}{i}\langle \exp_x^{-1}(y),\nabla_{\xi} e^{i\langle\exp_x^{-1}(y),\xi\rangle_{g}}\rangle_{g}\]
where $\nabla_\xi$ denotes the induced gradient on the cotangent fiber $T_x^*M$. Thus, we may integrate by parts in $\xi$ on the left-hand side of \eqref{d_squared} to obtain
\begin{equation}\label{ibp_xi}
\frac{1}{i}\int\limits_{-\infty}^\infty \int\limits_{T_x^*M} e^{i\langle\exp_x^{-1}(y),\xi\rangle_g}\wh\rho(At)\left\langle \exp_x^{-1}(y),\frac{\xi}{|\xi|_g}\right\rangle_{\!\!\!g}\sin(t\lambda)\sin(t|\xi|)\frac{d\xi\,dt}{\sqrt{\det g_x}}.
\end{equation}
 Since $\langle\exp_x^{-1}(y),\xi/|\xi|\rangle$ can be written as $d_g(x,y)$ times a bounded function of $x,$ $y,$ and $\xi/|\xi|$, and since $\sin(a) \sin(b) = \frac{1}{2}\lp\cos(a-b) - \cos(a+b)\rp$, we may repeat arguments from the proof of \propref{leading_term_mu} to see that \eqref{ibp_xi} is bounded by a constant times 
\begin{equation}\label{stat_phase_bound}
d_g(x,y)\lambda^{n-1}(1 + \lambda d_g(x,y))^{-\frac{n-1}{2}}.
\end{equation}
In the regime where $d_g(x,y) \le \frac{1}{\log\lambda}$, \eqref{stat_phase_bound} is clearly bounded by $\mathcal O\lp\lambda^{n-1}/\log\lambda\rp$. If $\frac{1}{\log\lambda}\le d_g(x,y) \le \frac{1}{2}\inj(M,g)$, then we have that
\[d_g(x,y)\lambda^{n-1}(1 + \lambda d_g(x,y))^{-\frac{n-1}{2}} \le
C\lambda^{\frac{n-1}{2}}(\log\lambda)^{\frac{n-1}{2}} \le \frac{C\lambda^{n-1}}{\log\lambda},\]
since $n \ge 2.$ This completes the proof in the case of no $x,y$ derivatives. 

To include $\partial_x^\alpha\partial_y^\beta$, we must consider a few cases. As discussed in the proof of \propref{leading_term_mu}, each derivative which falls on the integral in the left-hand side of \eqref{leading_term} yields one additional power of $\lambda$ in the asymptotic expansion. If every derivative falls on the integral, then we have precisely the claimed leading order term plus a remainder on the order of $\lambda^{n-1 + |\alpha|+|\beta|}/\log\lambda$ by combining \propref{leading_term_mu}, an integration from 0 to $\lambda$ in $\mu$, and a repetition of the above argument. Alternatively, if two or more of the derivatives fall on the $\Theta^{-\frac{1}{2}}$ factor, then \propref{leading_term_mu} shows that the contributions from the integral itself are at most $\lambda^{n - 2 + |\alpha|+|\beta|}$, and then we simply use that all derivatives of $\Theta^{-\frac{1}{2}}$ are bounded when $x,y$ are restricted to a compact set. The only remaining case is the scenario in which exactly one derivative falls on the $\Theta^{-\frac{1}{2}}$ factor. Here we must use the fact that the differential of $\Theta^{-\frac{1}{2}}(x,y)$ vanishes on the diagonal in $M\times M,$ and hence both $\partial_{x_j}\lp\Theta^{-\frac{1}{2}}(x,y)\rp$ and $\partial_{y_j}\lp\Theta^{-\frac{1}{2}}(x,y)\rp$ are $\mathcal O\lp d_g(x,y)\rp$ for any $j$. Combining this with previous arguments, we have that if $\alpha'$ is a multiindex of length $|\alpha|-1$, then 
\begin{align}\label{theta_one_d}
\begin{split}
\left|\partial_{x_j}(\Theta^{-\frac{1}{2}}(x,y))\partial_x^{\alpha'}\partial_y^{\beta} \int\limits_{-\infty}^\infty \int\limits_{T_x^*M}\!\!\!\! e^{i\langle\exp_x^{-1}(y),\xi\rangle_{g}}\wh\rho(At)\frac{\sin t\lambda}{t}\cos(t|\xi|_{g})\frac{d\xi\,dt}{\sqrt{\det g_x}}\right| & \\
& \hskip -2in \le C d_g(x,y)\lambda^{n-1 +|\alpha|+ |\beta|}(1 + \lambda d_g(x,y))^{-\frac{n-1}{2}}.
\end{split}
\end{align}
Arguing as before, we see that the right-hand side of \eqref{theta_one_d} is bounded by $\mathcal O\lp \lambda^{n-1 + |\alpha|+|\beta|}/\log\lambda\rp$ by considering the regions where $d_g(x,y)\le \frac{1}{\log\lambda}$ and $d_g(x,y) \ge \frac{1}{\log\lambda}$ separately. An analogous estimate holds with $\partial_{y_j}$ replacing $\partial_{x_j}.$ 
\end{proof}

Next, we estimate the terms in \eqref{double_sum} with $\gamma = \Id$ and $\nu \ge 1.$

\begin{lem}\label{near_diagonal_terms}
For $\nu = 1,2,\dotsc,$ and any $\delta > 0$, there exist constants $c,C_\nu,\lambda_0> 0$ such that if $A = \frac{1}{c\log\lambda}$,
\begin{align}\label{near_diagonal}
\begin{split}
& \sup\limits_{d_g(x,y)\le \frac{1}{2}\inj(M,g)}\left|\partial_x^\alpha\partial_y^\beta\lp u_\nu(x,y)\int\limits_{-\infty}^\infty \frac{\sin t\lambda}{t}\wh\rho(At)\partial_t W_\nu(t,d_{ g}(x,y))\,dt\rp\right|  \le C_\nu\max\{\lambda^{n-\nu-1+|\alpha|+|\beta|},\lambda^\delta\}
\end{split}
\end{align}
for all $\lambda \ge \lambda_0.$
\end{lem}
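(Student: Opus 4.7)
\textbf{Proof plan for \lemref{near_diagonal_terms}.} The strategy mirrors the stationary phase analysis carried out in \lemref{leading_term_mu} and \lemref{leading_term_lambda}, but now I would keep track of the additional factors $t^{j+1}$ and $|\xi|^{-\nu-k}$ that appear in the formula \eqref{linear_comb} for $\partial_t W_\nu$. First, I would substitute \eqref{linear_comb} into the integral to write
\[
I_\nu(x,y,\lambda) := \int_{-\infty}^\infty \frac{\sin t\lambda}{t}\,\wh\rho(At)\,\partial_tW_\nu(t,d_g(x,y))\,dt
\]
as a finite linear combination (over $j+k=\nu-1$ and over signs) of oscillatory integrals of the form
\[
\int\!\!\!\int_{T_x^*M}\frac{\sin t\lambda}{t}\wh\rho(At)e^{i\langle\exp_x^{-1}(y),\xi\rangle_g\pm it|\xi|_g}\chi(|\xi|_g)t^{j+1}|\xi|_g^{-\nu-k}\frac{d\xi\,dt}{\sqrt{\det g_x}},
\]
plus a contribution from the smooth, tempered remainder $f_\nu(t,\exp_x^{-1}(y))$. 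Since $\wh\rho(At)\sin(t\lambda)/t$ is compactly supported on an interval of length $\mathcal O(\log\lambda)$ and $f_\nu$ together with its derivatives is tempered in $t$ and smooth in the spatial variable, the $f_\nu$-piece contributes at most $\mathcal O(\lambda^\delta)$ for any $\delta>0$ once $c$ is taken sufficiently small.

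Next, for each remaining oscillatory integral, I would pass to polar coordinates $\xi=\lambda r\omega$ with $\omega\in S_x^*M$, $r>0$, and write $\sin(t\lambda)=\tfrac{1}{2i}(e^{it\lambda}-e^{-it\lambda})$ and use the spherical decomposition \eqref{spherical_decomp} to expand $\int_{S_x^*M}e^{i\lambda r\langle\exp_x^{-1}(y),\omega\rangle_g}d\sigma_x(\omega)$ into two pieces with phases $\pm'i\lambda rd_g(x,y)$ and amplitudes $a_{\pm'}$ satisfying $|\partial^\sigma a_{\pm'}(\zeta)|\le C(1+|\zeta|)^{-\frac{n-1}{2}-|\sigma|}$. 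After introducing a cutoff $\beta(r)$ supported near $r=1$ (with the complement contributing $\mathcal O(\lambda^{-N})$ by repeated integration by parts in $t$, exactly as in the proof of \lemref{leading_term_mu}), I would apply two-dimensional stationary phase in $(t,r)$ with large parameter $\lambda$. The phase $\psi=\pm'rd_g(x,y)+t(\pm r\pm''1)$ has a unique critical point $(t_0,r_0)=(\mp\pm'' d_g(x,y),1)$ for the sign combinations with $\pm,\pm''$ opposite (the other combinations yield nonstationary phases and hence $\mathcal O(\lambda^{-N})$), and the Hessian has determinant $-1$. Combining the $\lambda^{n-\nu-k}$ arising from the Jacobian and the $|\xi|^{-\nu-k}$ factor, the $\lambda^{-1}$ from 2D stationary phase, and the amplitude values $t_0^j\wh\rho(At_0)a_{\pm'}(\lambda\exp_x^{-1}(y))$ at the critical point (noting $\wh\rho(At_0)=1$ since $|At_0|\le\tfrac{1}{2}\inj(M,g)$), the leading contribution for each $(j,k)$ is bounded by
\[
C\,\lambda^{n-\nu-k-1}\,d_g(x,y)^j\,(1+\lambda d_g(x,y))^{-\frac{n-1}{2}}.
\]
Taking the worst case $k=0$, $j=\nu-1$ and using that $d_g(x,y)^{\nu-1}$ is uniformly bounded on the region $d_g(x,y)\le\tfrac{1}{2}\inj(M,g)$, I obtain $|I_\nu|=\mathcal O(\lambda^{n-\nu-1})$.

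For derivatives, I would argue as in the proofs of \lemref{leading_term_mu} and \lemref{leading_term_lambda}: each application of $\partial_x$ or $\partial_y$ can land either on the prefactor $u_\nu(x,y)$ (which is smooth and bounded on the relevant region by \lemref{u_nu_bound} of the appendix), on the exponential $e^{i\langle\exp_x^{-1}(y),\xi\rangle_g}$ (bringing down at most one power of $\lambda$ after rescaling $\xi=\lambda r\omega$), or on the amplitude factors $\exp_x^{-1}(y)$ and $d_g(x,y)$ (controlled uniformly via \lemref{exp_map}). Since each differentiation contributes at most an additional factor of $\lambda$, the bound after applying $\partial_x^\alpha\partial_y^\beta$ becomes $C_\nu\lambda^{n-\nu-1+|\alpha|+|\beta|}$, and the $\lambda^\delta$ term in the statement absorbs both the $f_\nu$ contribution and the degenerate case in which $n-\nu-1+|\alpha|+|\beta|$ is negative or very small.

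The main technical obstacle I expect is a bookkeeping one rather than a conceptual one: carefully organizing the stationary phase expansion so that the $(1+\lambda d_g(x,y))^{-\frac{n-1}{2}}$ decay from $a_{\pm'}$ and the factor $d_g(x,y)^j$ combine to give a bound that is genuinely uniform for all $d_g(x,y)\in[0,\tfrac{1}{2}\inj(M,g)]$ and for all orders of differentiation, without spurious factors of $(\log\lambda)^k$ from the $A^{-1}=c\log\lambda$ scaling of the cutoff $\wh\rho(At)$ entering the amplitude. Applying stationary phase directly in $(t,r)$ (rather than first passing through the intermediate $J_\nu(\mu)$ from differentiating in $\lambda$ and re-integrating) avoids these logarithmic losses, because $\wh\rho(At_0)=1$ at the critical point for all $A\le 1$.
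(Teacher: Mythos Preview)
Your approach is correct and reaches the same bound, but it differs from the paper's proof in one structural way worth noting. The paper does \emph{not} keep the factor $t^{j}$ in the amplitude when applying stationary phase. Instead, after passing to polar coordinates $\xi=r\omega$ (without the $\lambda$-rescaling yet), it uses the identity $t^{j}e^{\pm itr}=(\mp i\,\partial_r)^{j}e^{\pm itr}$ and integrates by parts $j$ times in $r$. This converts the troublesome $t^{j}$ into a sum of terms carrying factors $\langle\exp_x^{-1}(y),\omega\rangle_g^{\ell}\,r^{n-1-\nu-k-j+\ell}$ for $0\le\ell\le j$, with amplitudes that are now uniformly bounded on the whole support of $\wh\rho(At)$. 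Only then does the paper rescale $r\mapsto\lambda r$ and invoke the stationary phase argument from \lemref{leading_term_mu} verbatim, obtaining $C\lambda^{n-2\nu+\ell}(1+\lambda d_g(x,y))^{-\frac{n-1}{2}}$ with $\ell\le\nu-1$, hence the exponent $n-\nu-1$.

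Your direct route---leaving $t^{j}$ in the amplitude and evaluating it at the critical point $t_0=\pm d_g(x,y)$---also works and produces the equivalent factor $d_g(x,y)^{j}$. The price is exactly the bookkeeping issue you flag at the end: the amplitude now has size $(\log\lambda)^{j}$ on $\supp\wh\rho(A\cdot)$, so the stationary phase \emph{remainder} (not the terms at the critical point) carries a $(\log\lambda)^{j}$ factor. This is harmless because the remainder comes with an extra $\lambda^{-1}$, but it does need to be said explicitly; your remark that $\wh\rho(At_0)=1$ only addresses the leading term. The paper's integration-by-parts trick sidesteps this entirely, which is what buys the cleaner reduction to \lemref{leading_term_mu}.
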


\begin{proof}
Since $u_\nu$ is $C^\infty$ and $x,y$ are restricted to a compact set, derivatives of $u_\nu$ are uniformly bounded by some constant depending only on $\nu$ and the order of differentiation. Next, we recall that by \eqref{linear_comb}, it suffices to estimate
\begin{equation}\label{near_diagonal_suffices}
\partial_x^\alpha\partial_y^\beta\lp\int\limits_{-\infty}^\infty \int\limits_{T_x^*M} e^{i\langle\exp_x^{-1}(y),\xi\rangle_g + it(\lambda \pm |\xi|_g)}\wh\rho(At) \chi(|\xi|_g)t^{j}|\xi|_g^{-\nu-k}\frac{d\xi\,dt}{\sqrt{\det g_x}}\rp
\end{equation}
for any nonnegative integers $j,k$ with $j + k = \nu -1,$ where $\chi \equiv 0$ on $[-1,1]$ and $\chi \equiv 1$ outside $[-2,2].$ To see that this is sufficient, we must show that the error term in \eqref{linear_comb} contributes only negligible terms to the asymptotics in $\lambda$. Let $f_\nu:\R \times T_x^*M\to\C$ be a smooth, tempered function. Then 
\[\left|\int\limits_{-\infty}^\infty \wh\rho(At)e^{it\lambda}f_\nu(t,\exp_x^{-1}(y))\,dt\right| \le C\int\limits_{-\infty}^\infty |\wh\rho(At)|(1 + d_g(x,y))^p(1 + |t|)^q\,dt \]
for some $p,q\ge 0$ since $f_\nu$ is tempered. Since $d_g(x,y)$ is bounded, we have that the above is dominated by a constant times
\[\int\limits_{-\infty}^\infty|\wh\rho(At)|(1 + |t|)^q\,dt \le C \int\limits_{-1/A}^{1/A}(1 + |t|)^q\,dt,\]
which is certainly bounded by $C_1e^{C_2/A}$ for some $C_1,C_2 > 0.$ For $A = \frac{1}{c\log\lambda}$ with $c$ sufficiently small, we then have that this contributes at most $\lambda^\delta$ with $\delta> 0$ small. The same is true if we introduce derivatives of $f$ with respect to $x,y$. Therefore, the proof will be complete once we show that \eqref{near_diagonal_suffices} satisfies the correct bound.

Changing to polar coordinates via $\xi = r \omega$ , we have that \eqref{near_diagonal_suffices} equals 
\begin{equation}\label{near_diag_polar}
\int\limits_{-\infty}^\infty \int\limits_0^\infty \int\limits_{S_x^*M}\!\!\!\! e^{i r\langle\exp_x^{-1}(y),\omega\rangle_g + it(\lambda \pm r)}\wh\rho(At)\chi( r)t^{j}r^{n-1 - \nu - k}\,d\sigma_x(\omega)\,dr\,dt.
\end{equation}
Noting that $t^{j}e^{\pm it r} = (\pm\frac{1}{i}\partial_r)^{j}e^{\pm i t r}$, we may integrate by parts $j $ times in $r$. This is justified in the sense of distributions, even if the integral in $r$ is not absolutely convergent. If any derivatives fall on the $\chi(r)$ factor, the resulting integrand will be compactly supported in $r$, and so combining the preceding argument with the discussion prior to \eqref{linear_comb}, we see that modulo an $\mathcal O\lp\lambda^\delta\rp$ error, \eqref{near_diag_polar} can be written as a finite linear combination of terms of the form 
\begin{align}\label{near_diag_ibp}
\begin{split}
& \int\limits_{-\infty}^\infty \int\limits_0^\infty \!\!\int\limits_{S_x^*M}\!\!\!\! e^{ir\langle\exp_x^{-1}(y),\omega\rangle_g+ it(\lambda \pm r)}\wh\rho(At)\chi(r)\times \\
&\hskip 0.5in \times \langle \exp_x^{-1}(y),\omega\rangle_g^\ell r^{n-1 -\nu-k-j+\ell}\,d\sigma_x(\omega)\,dr\,dt
\end{split}
\end{align}
for $0 \le \ell \le j$. Rescaling via $r\mapsto \lambda r$, and recalling that $j + k = \nu -1,$ we obtain
\begin{align}\label{chi_lambda_r}
\begin{split}
& \lambda^{n-2\nu + \ell + 1}\!\!\!\! \int\limits_{-\infty}^\infty \int\limits_0^\infty \int\limits_{S_x^*M}\!\!\!\! e^{i\lambda r\langle\exp_x^{-1}(y),\omega\rangle_g+ it\lambda (1 \pm r)}\wh\rho(At)\chi(\lambda r) \langle \exp_x^{-1}(y),\omega\rangle_g^\ell r^{n-2\nu+\ell}\,d\sigma_x(\omega)\,dr\,dt.
\end{split}
\end{align}
We now wish to apply the stationary phase argument from the proof of \lemref{leading_term_mu}. One potential difficulty that arises is that the cutoff $\chi$ is scaled by $\lambda$, and so it appears that in the corresponding analogue of \eqref{statphase}, one may have extra factors of $\lambda$ which appear due to differentiating $\chi(\lambda r)$ with respect to $r$. However, we recall that the $\beta$ from the proof of \lemref{leading_term_mu} was supported in $\lp\frac{1}{2},\frac{3}{2}\rp$, and $\chi(\lambda r)$ is identically 1 for $r \ge \frac{2}{\lambda}$. Thus, $\partial_r^k\chi(\lambda r)$ is zero for $r > \frac{2}{\lambda}$. So, for large enough $\lambda$, the derivatives of $\chi$ will vanish on the support of $\beta$, and the problem is avoided. We may therefore apply the exact same argument as in the proof of \lemref{leading_term_mu} to see that \eqref{chi_lambda_r} is bounded by $\lambda^{n-2\nu+\ell}(1+\lambda d_g(x,y))^{-\frac{n-1}{2}}$. Since $\ell \le j \le \nu -1$, we have that $n-2\nu + \ell \le n - \nu - 1$, giving the exponent we claimed in \lemref{near_diagonal_terms}. As discussed previously, adding derivatives $\partial_x^\alpha\partial_y^\beta$ simply adds at most $|\alpha|+|\beta|$ additional powers of $\lambda$ from the $e^{i\lambda r\langle\exp_x^{-1}(y),\omega\rangle_g}$ factor, and so the proof is complete. 
\end{proof}

Finally, we must control the terms in \eqref{double_sum} for which $\gamma \ne \Id$. Here we must work in the universal cover and take advantage of the fact that the lifts $\wt x$ and $\gamma \wt y$ are bounded away from each other. This allows us to improve our estimates on the corresponding terms by a power of $\frac{n-1}{2}$ by exploiting the factors of $(1 + \lambda d_{\wt g}(\wt x,\gamma\wt y))^{-\frac{n-1}{2}}$ which appear when we apply stationary phase. 

\begin{lem}\label{off_diagonal}
Given any $\delta> 0$, there exist constants $c,C_\nu,\lambda_0 > 0$ such that if $A = \frac{1}{c\log\lambda}$ and $\wt x,\wt y\in \wt M$ are such that $d_{\wt g}(\wt x,\wt y)\ge \ve$ for some $\ve > 0$, then
\[\left|\partial_{\wt x}^\alpha \partial_{\wt y}^\beta\!\lp \!\! u_\nu(\wt x,\wt y)\!\!\!\int\limits_{-\infty}^\infty\!\!\!\frac{\sin t\lambda}{t}\wh\rho(At)\partial_tW_\nu(t,d_{\wt g}(\wt x,\wt y))\,dt\rp  \right|\!\le\! C_\nu\!\max\{\lambda^{\frac{n-1}{2}-\nu +|\alpha|+|\beta|+\delta},\lambda^\delta\}\]
for $\lambda \ge \lambda_0$. 
\end{lem}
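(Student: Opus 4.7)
The strategy is to mimic the proof of \lemref{near_diagonal_terms}, exploiting the additional hypothesis $d_{\wt g}(\wt x,\wt y)\ge\ve$ to extract an extra factor of $\lambda^{-\frac{n-1}{2}}$ from the spherical decomposition of the oscillatory integral. I first observe that the support constraint already tames the prefactor $u_\nu$: since $\partial_t W_\nu(t,d_{\wt g}(\wt x,\wt y))$ vanishes for $|t|<d_{\wt g}(\wt x,\wt y)$ and $\wh\rho(At)$ is supported in $|t|\le L/A = Lc\log\lambda$, the integrand is zero unless $d_{\wt g}(\wt x,\wt y)\le Lc\log\lambda$. Under this constraint, \lemref{u_nu_bound} and \lemref{exp_map} imply that $|\partial_{\wt x}^{\alpha'}\partial_{\wt y}^{\beta'} u_\nu(\wt x,\wt y)|$ and the derivatives of $d_{\wt g}(\wt x,\wt y)$ and $\exp_{\wt x}^{-1}(\wt y)$ grow at most as $e^{C d_{\wt g}(\wt x,\wt y)}\le \lambda^{Cc}$, which we can make $\le \lambda^{\delta/2}$ by choosing $c$ small. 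All such factors will therefore be harmlessly absorbed into $\lambda^\delta$.

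Next, using \eqref{linear_comb}, the smooth tempered remainder $f_\nu(t,\exp_{\wt x}^{-1}(\wt y))$ contributes at most $\lambda^\delta$ to the integral, exactly as argued at the start of the proof of \lemref{near_diagonal_terms}. It thus suffices to bound oscillatory terms of the form
\[
\int_{-\infty}^\infty\!\!\int_{T_{\wt x}^*\wt M} e^{i\langle \exp_{\wt x}^{-1}(\wt y),\xi\rangle_{\wt g}\pm it|\xi|_{\wt g}}\wh\rho(At)\frac{\sin t\lambda}{t}\chi(|\xi|_{\wt g})t^{j+1}|\xi|_{\wt g}^{-\nu-k}\frac{d\xi\,dt}{\sqrt{\det\wt g_{\wt x}}},
\]
with $j+k=\nu-1$. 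Pass to polar coordinates $\xi = r\omega$, expand $\sin t\lambda$ into complex exponentials, and integrate by parts $j$ times in $r$ using $t^j e^{\pm itr}=(\pm\partial_r/i)^j e^{\pm itr}$. Up to compactly supported (and hence $\mathcal O(\lambda^\delta)$) remainders coming from derivatives hitting $\chi$, this produces a finite linear combination of terms of the form
\[
\int\!\!\int\!\!\int_{S_{\wt x}^*\wt M} e^{ir\langle w,\omega\rangle_{\wt g}+it(\lambda\pm r)}\wh\rho(At)\chi(r)\langle w,\omega\rangle_{\wt g}^{\ell} r^{n-2\nu+\ell}\,d\sigma_{\wt x}(\omega)\,dr\,dt,\quad 0\le\ell\le j,
\]
where $w=\exp_{\wt x}^{-1}(\wt y)$ and $|w|=d_{\wt g}(\wt x,\wt y)\ge\ve$.

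Rescaling $r\mapsto\lambda r$ produces a prefactor $\lambda^{n-2\nu+\ell+1}$. Applying the spherical decomposition \cite[Theorem 1.2.1]{SoggeBook2017} to the $\omega$-integral rewrites it as $\sum_\pm e^{\pm i\lambda r d_{\wt g}(\wt x,\wt y)}a_\pm^\ell(\lambda r w)$ with symbols satisfying $|\partial^\gamma a_\pm^\ell(\zeta)|\le C(1+|\zeta|)^{-\frac{n-1}{2}-|\gamma|}$; localizing to $r\in(1/2,3/2)$ by a cutoff $\beta$ (the complement is handled by integration by parts in $t$ as in \lemref{leading_term_mu}), and using $|\zeta|=\lambda rd_{\wt g}(\wt x,\wt y)\gtrsim\lambda\ve$, we extract a factor of $\lambda^{-\frac{n-1}{2}}$ from $a_\pm^\ell$ along with all its derivatives. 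What remains is a two-dimensional oscillatory integral in $(t,r)$ with phase $\lambda\psi_\pm(t,r)=\lambda\bigl(\pm rd_{\wt g}(\wt x,\wt y)+t(1\pm r)\bigr)$, whose unique nondegenerate critical point $(\pm d_{\wt g}(\wt x,\wt y),1)$ either lies inside the support of $\wh\rho(At)\beta(r)$ (when $d_{\wt g}(\wt x,\wt y)\le Lc\log\lambda$, in which case stationary phase applies and contributes $\lambda^{-1}$) or the integrand vanishes identically by finite propagation speed. The amplitude derivatives picked up in stationary phase involve at most $\wh\rho^{(m)}(At)$, each introducing a harmless factor $A^m\le 1$. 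The combined prefactor is $\lambda^{n-2\nu+\ell+1-\frac{n-1}{2}-1}=\lambda^{\frac{n-1}{2}-2\nu+\ell}$, and since $\ell\le j\le\nu-1$, this is at most $\lambda^{\frac{n-1}{2}-\nu-1}$, comfortably beating the claimed exponent.

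To incorporate the derivatives $\partial_{\wt x}^\alpha\partial_{\wt y}^\beta$, observe that each derivative either (i) falls on $u_\nu$ or on geometric data such as $\exp_{\wt x}^{-1}(\wt y)$, $d_{\wt g}(\wt x,\wt y)$, or $\wt g_{\wt x}$, contributing at most a factor $\lambda^\delta$ by the estimates cited above, or (ii) falls on the oscillatory factor and brings down one power of $\lambda$ (times bounded geometric quantities). Since at most $|\alpha|+|\beta|$ factors of $\lambda$ arise in total, the final bound is $\lambda^{\frac{n-1}{2}-\nu-1+|\alpha|+|\beta|+\delta}\le\lambda^{\frac{n-1}{2}-\nu+|\alpha|+|\beta|+\delta}$, as claimed. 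The main obstacle is technical rather than conceptual: one must verify uniformity of stationary phase as $d_{\wt g}(\wt x,\wt y)$ ranges over $[\ve,Lc\log\lambda]$, and carefully track how the logarithmically growing support of $\wh\rho(At)$, combined with exponentially growing bounds on $u_\nu$ and on derivatives of $\exp_{\wt x}^{-1}$, conspire to produce an overall factor of only $\lambda^\delta$ once $c$ is taken small enough.
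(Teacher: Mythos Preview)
Your proposal is correct and follows essentially the same route as the paper: use finite propagation speed and the support of $\wh\rho(At)$ to restrict to $d_{\wt g}(\wt x,\wt y)\le L/A$, control $u_\nu$ and the tempered remainder $f_\nu$ via the exponential bounds of Lemmas~\ref{u_nu_bound} and~\ref{exp_map} (which become $\mathcal O(\lambda^{\delta/2})$ once $c$ is small), and then repeat the polar-coordinate/stationary-phase analysis from \lemref{near_diagonal_terms}, exploiting $d_{\wt g}(\wt x,\wt y)\ge\ve$ to gain the extra factor $(1+\lambda d_{\wt g}(\wt x,\wt y))^{-\frac{n-1}{2}}\lesssim\lambda^{-\frac{n-1}{2}}$. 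The paper simply cites \lemref{near_diagonal_terms} rather than writing out the spherical decomposition and stationary phase again, but the content is identical.

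One arithmetic slip to correct: $n-2\nu+\ell+1-\tfrac{n-1}{2}-1=\tfrac{n+1}{2}-2\nu+\ell$, not $\tfrac{n-1}{2}-2\nu+\ell$. With $\ell\le\nu-1$ this yields at most $\lambda^{\frac{n-1}{2}-\nu}$ rather than $\lambda^{\frac{n-1}{2}-\nu-1}$, so you meet the claimed exponent exactly rather than beating it by one; the conclusion is unaffected.
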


\begin{proof}
The argument proceeds in much the same way as the proof of \lemref{near_diagonal_terms}, although we must be cautious about the fact that the $\wt x,\wt y$ need not be restricted to a fixed compact set. However, we may recall that $\partial_t W_\nu$ vanishes when $d_{\wt g}(\wt x,\wt y) > |t|$ and that $\wh\rho(At)$ vanishes when $|t| \ge L/A.$ Hence, we may assume that $d_{\wt g}(\wt x,\wt y) \le \frac{L}{A}.$ By \lemref{u_nu_bound}, we have that under this restriction on $d_{\wt g}(\wt x,\wt y)$,
\begin{equation}\label{u_nu_epsilon}
|P_{\wt x}Q_{\wt y}u_\nu(\wt x,\wt y)|\le C_1e^{C_2d_{\wt g}(\wt x,\wt y)}\le C_1 e^{C_2/A} = C_1 \lambda^{C_2c}
\end{equation}
if $A = \frac{1}{c\log\lambda}$. We can then choose $c$ small enough so that \eqref{u_nu_epsilon} is bounded by $\mathcal O\lp \lambda^{\delta/2}\rp$. Note that this choice of $c$ depends only on $\delta$, $\nu,$ and the order of differentiation. Therefore, it suffices to prove that
\begin{align}\label{off_diag_suffices}
\begin{split}
& \sup\limits_{\ve\le d_{\wt g}(\wt x,\wt y)\le \frac{L}{A}}\left|P_{\wt x}Q_{\wt y}\lp\int\limits_{-\infty}^\infty\frac{\sin t\lambda}{t}\wh\rho(At)\partial_t W_\nu(t,d_{\wt g}(\wt x,\wt y))\,dt\rp\right|\le C_\nu\max\{\lambda^{\frac{n-1}{2}-\nu + |\alpha|+|\beta| + \frac{\delta}{2}},\lambda^{\frac{\delta}{2}}\}.
\end{split}
\end{align}
We argue as in the beginning of the proof of \lemref{near_diagonal_terms} to show that it is in fact enough to estimate 
\begin{equation}\label{off_diag_linear_comb}
P_{\wt x}Q_{\wt y}\lp\int\limits_{-\infty}^\infty \int\limits_{T_{\wt x}^*\wt M} e^{i\langle\exp_{\wt x}^{-1}(\wt y),\xi\rangle_g + it(\lambda \pm |\xi|_g)}\wh\rho(At) \chi(|\xi|_g)t^{j}|\xi|_g^{-\nu-k}\frac{d\xi\,dt}{\sqrt{|\wt g_{\wt x}|}}\rp.
\end{equation}
To reduce to this case, we must show that the smooth, tempered error $f_\nu(t,\exp_{\wt x}^{-1}(\wt y))$ in \eqref{linear_comb} introduces a negligible contribution to the growth in $\lambda$ as before. The new concern is that the $\wt x$ and $\wt y$ are not restricted to a compact set, and so if we differentiate $f_\nu(t,\exp_{\wt x}^{-1}(\wt y))$ with respect to $\wt x$ or $\wt y$, we must be able to control the derivatives of $\exp_{\wt x}^{-1}(\wt y)$ which appear due to the chain rule. It is here that we must apply \lemref{exp_map}, which states that all derivatives of the inverse exponential map are bounded at most exponentially in $d_{\wt  g}(\wt x,\gamma\wt y)$. Combining this with the fact that $f_\nu$ is a tempered function, we have that 
\[\left|\partial_{\wt x}^\alpha\partial_{\wt y}^\beta f_{\nu}(t,\exp_{\wt x}^{-1}(\wt y))\right| \le C_1 e^{C_2 d_{\wt g}(\wt x,\wt y)}(1 + |t|)^p \]
for some constants $C_1,C_2,p> 0$ which depend only on $\nu$ and the order of differentiation. Hence, for $|d_{\wt g}(\wt x,\wt y)|\le \frac{L}{A}$, we have
\[\left|\int\limits_{-\infty}^\infty \wh\rho(At)e^{it\lambda}\partial_{\wt x}^\alpha\partial_{\wt y}^\beta f_\nu(t,\exp_{\wt x}^{-1}(\wt y))\,dt\right| \le C_1 e^{C_2/A}\int\limits_{-L/A}^{L/A}(1 + |t|)^{p}\,dt \le C_1 e^{C_2/A}\]
after potentially increasing $C_1$ and $C_2.$ As discussed previously, we can then choose $c$ small enough so that the above is bounded by $\mathcal O\lp\lambda^{\delta/2}\rp$. Therefore, we only need to show that \eqref{off_diag_linear_comb} is bounded by $\mathcal O\lp \lambda^{\frac{n-1}{2}-\nu +|\alpha|+|\beta|+\frac{\delta}{2}}\rp$ for $d_{\wt g}(\wt x,\wt y)\le \frac{L}{A}$. For the case where we take no derivatives, we may repeat the proof of \lemref{near_diagonal_terms} to obtain a linear combination of terms, each with a bound of the form $C_\nu\lambda^{n-2\nu + \ell}(1 + \lambda d_{\wt g}(\wt x,\wt y))^{-\frac{n-1}{2}}$ for $0 \le \ell \le \nu-1$. However, in this case, the distance between $\wt x,\wt y $ is bounded below by $\frac{1}{2}\inj(M,g),$ and so the previously mentioned terms are all bounded by $C_\nu\lambda^{\frac{n-1}{2}-\nu}$ uniformly in $\wt x,\wt y$ under our conditions on $\ell.$ In order to include derivatives, we may again repeat previous arguments to show that we obtain at most $|\alpha|+|\beta|$ extra powers of $\lambda,$ but we must take into account the possibility that we obtain a factor involving derivatives of $\exp_{\wt x}^{-1}(\wt y)$. In such a case, we simply apply \lemref{exp_map} and previous discussions to see that this contributes at worst an extra $\mathcal O\lp\lambda^{\delta/2}\rp$ factor.

\end{proof}

In light of the three preceding lemmas, the proof of \propref{parametrix_estimate} is nearly complete. The final step is to recall that by \eqref{gamma_count} and finite speed of propagation, the number of nonzero terms in \eqref{double_sum} with $\gamma \ne \Id$ is bounded by a constant times $e^{C/A}$, and hence is bounded by $\lambda^\delta$ with $\delta$ small if we choose $A = \frac{1}{c\log\lambda}$ with $c$ small enough. Therefore, by \lemref{off_diagonal} and the triangle inequality we have that for any $P,Q\in \textnormal{Diff}(\wt M)$ of orders $|\alpha|$ and $|\beta|$, respectively,
\begin{align*}
& \sum\limits_{\gamma \ne \Id}\sum\limits_{\nu = 0}^N\left|P_{\wt x}Q_{\wt y}\!\lp\!\frac{1}{\pi}u_\nu(\wt x,\gamma\wt y)\!\!\int\limits_{-\infty}^\infty\!\! \wh\rho(At)\frac{\sin t\lambda}{t}\partial_t W_\nu(t,d_{\wt g}(\wt x,\gamma\wt y))\,dt\!\!\rp\right| \le C\max\{\lambda^{\frac{n-1}{2}-\nu+|\alpha|+|\beta| + 2\delta},\lambda^{2\delta}\}
\end{align*}
for some $C>0.$ 
Combining this with Lemmas \ref{leading_term_lambda} and \ref{near_diagonal_terms}, the proof of \propref{parametrix_estimate} is complete. In combination with Propositions \ref{exact_kernel_remainder} and \ref{parametrix_error}, we can see that this completes the proof of \thmref{logthm}.

\begin{rmk}[Proof of \thmref{logthm_2}]\label{safarov_remark}\textnormal{
We note that throughout the entire proof of \thmref{logthm}, the only reason we needed $d_g(x,y)$ to be small was so that we could uniquely determine which term in the parametrix expansion gives the leading order behavior, which allows us to write the asymptotic \eqref{standardweyl}. However, if one assumes that $d_g(x,y)\ge \ve$ for some $\ve > 0$, then the only issues that arise are that there may be a finite collection of $\gamma\in\Gamma$ for which $d_{\wt g}(\wt x,\gamma\wt y) = d_g(x,y)$, and that $\exp_{x}^{-1}(y)$ is no longer necessarily well-defined. However, in such a case, $\exp_{\wt x}^{-1}(\wt y)$ still makes sense on $\wt M$, and we have that $d_{\wt g}(\wt x,\gamma\wt y)$ is bounded below by a positive constant for \emph{every} $\gamma,$ since it is impossible for the distance between any two lifts $\wt x,\wt y$ to be smaller than $d_g(x,y)$. This is due to the fact that geodesics on $\wt M$ project to geodesics on $M$ via the covering map. Hence, one could apply \lemref{off_diagonal} to all the terms in the parametrix to obtain that the integral on the left-hand side of \eqref{parametrix_estimate_eqn} satisfies
\[\left|\frac{1}{\pi}\int\limits_{-\infty}^\infty \frac{\sin t\lambda}{t}\wh\rho(At)\partial_x^\alpha\partial_y^\beta K_N(t,x,y)\,dt\right| \le C\lambda^{\frac{n-1}{2}+|\alpha|+|\beta| + \delta}\]
for some small $\delta> 0.$ Since this bound is smaller than $\mathcal O\lp\frac{\lambda^{n-1+|\alpha|+|\beta|}}{\log\lambda}\rp$, we can combine this with Propositions \ref{exact_kernel_remainder} and \ref{parametrix_error} to see that we obtain an upper bound of the form 
\[\sup\limits_{d_g(x,y)\ge \ve}\left|\partial_x^\alpha\partial_y^\beta E_\lambda(x,y)\right|\le \frac{C\lambda^{n-1+|\alpha|+|\beta|}}{\log\lambda}\]
for any $\ve>0$, which is exactly the content of \thmref{logthm_2}.}
\end{rmk}




\section{Proof of \thmref{spectral_cluster_thm}}
\label{cluster_thm_proof}
In this section, we show that Theorem \ref{spectral_cluster_thm} follows from \thmref{logthm} in a straightforward manner. 

\begin{proof}[Proof of Theorem \ref{spectral_cluster_thm}]
Recalling the definition of $E_{_{\!(\lambda,\lambda+1]}}(x,y)$ in \eqref{spectral_cluster_kernel}, \thmref{logthm} implies that
\begin{equation}\label{cluster_leadingterm}
E_{_{\!(\lambda,\lambda+1]}}(x,y)= \frac{1}{(2\pi)^n}\!\!\!\!\!\int\limits_{\lambda<|\xi|\le \lambda+1}\!\!\!e^{i\langle\exp_x^{-1}(y),\xi\rangle_{g}}\frac{d\xi}{\sqrt{\det g_x}} + R_{(\lambda,\lambda+1]}(x,y),
\end{equation}
where $R_{_{\!(\lambda,\lambda+1]}}(x,y) = R_{\lambda+1}(x,y) - R_\lambda(x,y)$ satisfies
\[\sup\limits_{d_g(x,y)\le \ve}\left|\partial_x^\alpha\partial_y^\beta R_{_{\!(\lambda,\lambda+1]}}(x,y)\right|\le \mathcal O\lp\frac{\lambda^{n-1+|\alpha|+|\beta|}}{\log\lambda}\rp.\]
 We then define 
\[F(\tau) = \frac{1}{(2\pi)^n}\int\limits_0^\tau\int\limits_{S_x^*M}e^{i r\langle\exp_x^{-1}(y),\omega\rangle_{g}}r^{n-1}\,dr \,d\sigma_x(\omega),\]
where $d\sigma_x$ denotes the induced measure on $S_x^*M$, so that the first term on the right-hand side of \eqref{cluster_leadingterm} equals $F(\lambda+1) - F(\lambda)$. By Taylor's theorem, we see that
\[F(\lambda+1) - F(\lambda) = F'(\lambda) + \frac{1}{2}F''(\tau),\]
for some $\tau\in (\lambda,\lambda+1)$. Since 
\[F'(\lambda) = \frac{\lambda^{n-1}}{(2\pi)^n}\int\limits_{S_x^*M}e^{i\lambda\langle\exp_x^{-1}(y),\omega\rangle_{g}}\,d\sigma_x(\omega) = \frac{\lambda^{n-1}}{(2\pi)^{n/2}}\frac{J_{\frac{n-2}{2}}(\lambda d_g(x,y))}{(\lambda d_g(x,y))^{\frac{n-2}{2}}},\]
it suffices to show that $F''(\tau)$ is smaller than the remainder bound claimed in \thmref{spectral_cluster_thm}. By direct computation, we see that 
\begin{align}\label{F''}
\begin{split}
F''(\tau) & = (n-1)\tau^{n-2}\!\!\!\!\int\limits_{S_x^*M}\!\!\!\!e^{i\tau\langle\exp_x^{-1}(y),\omega\rangle_{g}}\,d\sigma_x(\omega)\\
& \hskip 0.5in  + \tau^{n-1}\!\!\!\!\int\limits_{S_x^*M}\!\!\!i\langle\exp_x^{-1}(y),\omega\rangle_{g}e^{i\tau\langle\exp_x^{-1}(y),\omega\rangle_{g}}\,d\sigma_x(\omega).
\end{split}
\end{align}
For the first term, we can simply use that the integral is a uniformly bounded function of $\tau$ to obtain a bound of size $\mathcal O\lp\lambda^{n-2}\rp$ for $\tau\in(\lambda,\lambda+1)$, which is certainly smaller than $\mathcal O\lp\lambda^{n-1}/\log\lambda\rp$. To estimate the second term in \eqref{F''}, we can simply repeat arguments from the proof of \lemref{leading_term_lambda} to see that it is bounded by a constant times
\[d_g(x,y)\lambda^{n-1}(1 + \lambda d_g(x,y))^{-\frac{n-1}{2}},\]
for our range of $\tau.$ By considering the regions where $d_g(x,y) \le \frac{1}{\log\lambda}$ and $d_g(x,y)\ge \frac{1}{\log\lambda}$ separately as before, we obtain that the above is indeed bounded by $\mathcal O\lp\lambda^{n-1}/\log\lambda\rp$. As discussed in previous arguments, we may include derivatives in $x,y$ by simply noting that each differentiation yields at most one additional power of $\tau$ in \eqref{F''}. Thus, the proof of \thmref{spectral_cluster_thm} is complete.

\end{proof}




\appendix
\section{Localized Summations and Integrals}
\label{appendix}
In this appendix we prove a technical estimate on summations of the form 
\[\sum\limits_{k=1}^\infty(1 + |\lambda - k|)^{-N}k^p,\]
where $N$ is large, so that the summand is localized to where $k\approx \lambda$. The estimate was used in the proof of \propref{exact_kernel_remainder}, but the proof of the estimate itself is not particularly instructive, so we present the argument here. In order to prove the estimate for sums, it is convenient to first prove an estimate for integrals with a similar form. The version for sums then follows from a comparison argument. 

\begin{lem}\label{localized_integrals}
Let $p\in\R$ . Then there exists an integer $N_0>0$ and a constant $C > 0$ such that
\begin{equation}\label{localized_one_dimension}
\int\limits_1^\infty (1 + \big|\lambda - r\big|)^{-N}(1+r)^p\,dr \le C \max\{\lambda^p,1\}
\end{equation}
for all $\lambda \ge 1$ and for all $N \ge N_0$. In addition, if $p \ge 0$, then the above estimate holds for the integral over $0 \le r <\infty$.
\end{lem}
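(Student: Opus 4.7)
The plan is to split $[1,\infty)$ into three subregions according to the size of $r$ relative to $\lambda$: a left region $r \in [1,\lambda/2]$, a central region $r \in [\lambda/2, 2\lambda]$, and a right region $r \in [2\lambda,\infty)$. The idea is that in the left and right regions, $|\lambda - r|$ is at least $\lambda/2$ or $r/2$ respectively, so the localizing factor $(1+|\lambda - r|)^{-N}$ easily overpowers $(1+r)^p$ for $N$ large enough, while the central region is where $(1+r)^p$ is comparable to $\max\{\lambda^p, 1\}$ and where the actual mass of the integral sits.

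On the left region, $(1+|\lambda-r|)^{-N} \le (1+\lambda/2)^{-N}$ and $(1+r)^p$ is bounded by a constant multiple of $\max\{\lambda^p, 1\}$ (treating $p \ge 0$ and $p < 0$ separately, using $\lambda \ge 1$), so this interval of length at most $\lambda/2$ contributes at most $C\lambda^{1-N}\max\{\lambda^p, 1\}$, which is dominated by $\max\{\lambda^p, 1\}$ as soon as $N \ge 1$. On the right region $|\lambda - r| \ge r/2$, so the integrand is bounded by $C(1+r)^{p-N}$; integrating the tail gives something of size $C\lambda^{p-N+1}$, which is $O(\max\{\lambda^p, 1\})$ whenever $N > p + 1$. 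On the central region, $(1+r)^p \le C\max\{\lambda^p, 1\}$ uniformly, and the substitution $u = r - \lambda$ shows that $\int_{\lambda/2}^{2\lambda}(1+|\lambda - r|)^{-N}\,dr \le \int_{\R}(1+|u|)^{-N}\,du$, which is finite for any $N \ge 2$. Summing the three contributions yields the desired estimate, with any integer $N_0 > p + 1$ sufficing.

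For the final statement, if $p \ge 0$, then extending the integration to include $[0,1]$ adds a contribution of size at most $2^p \cdot (1 + (\lambda - 1))^{-N} \le C\lambda^{-N}$, which is dominated by $\max\{\lambda^p, 1\} = \lambda^p$ since $\lambda \ge 1$. The argument contains no serious obstacle; the only mild subtlety is choosing a single threshold $N_0 = N_0(p)$ (e.g., any integer exceeding $p + 2$) that simultaneously makes all three region bounds hold uniformly in $\lambda \ge 1$.
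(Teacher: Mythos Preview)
Your proof is correct and follows essentially the same strategy as the paper: split the domain according to the position of $r$ relative to $\lambda$, bound $(1+r)^p$ by $C\max\{\lambda^p,1\}$ where $r$ is comparable to $\lambda$, and use integrability of $(1+|u|)^{-N}$. The only cosmetic difference is that the paper uses a two-piece split at $r=\lambda$ together with the substitution $y = 1 + |\lambda - r|$, whereas you use a three-piece dyadic split $[1,\lambda/2]\cup[\lambda/2,2\lambda]\cup[2\lambda,\infty)$; both lead to the same conclusion with the same threshold $N_0$ depending only on $p$.
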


\begin{proof}
 First note that it is natural to consider the integrals over $[1,\lambda)$ and $(\lambda,\infty)$ separately. Observe that 
\begin{equation}\label{1_to_lambda}
\int\limits_0^\lambda(1 +\lambda - r)^{-N}(1+r)^p \,dr \le C\max\{\lambda^p,1\} \int\limits_0^\lambda (1 + \lambda - r)^{-N}\,dr.
\end{equation}
Then, by the change of variables $y = 1 + \lambda - r$, we get that 
\[\int\limits_0^\lambda (1 + \lambda-r)^{-N}\,dr  = \int\limits_1^{1 + \lambda}y^{-N}\,dy \le \int\limits_1^\infty y^{-N}\,dy\]
and $\int_1^\infty y^{-N}\,dy$ is bounded by a uniform constant for all $N\ge 2$.
Combining the above with \eqref{1_to_lambda}, we have
\[\int\limits_0^\lambda (1 + \lambda - r)^{-N}(1+r)^p\,dr \le C\lp \max\{\lambda^p,1\}\rp.\]

Now, consider the integral over $[\lambda,\infty)$. Here, we make the analogous change of variables ${y = 1 + r -\lambda}$ to obtain
\begin{equation}\label{lambda_to_infinity}
\int\limits_\lambda^{\infty}(1+r-\lambda)^{-N}(1+r)^p\,dr = \int\limits_1^{\infty}y^{-N}(\lambda + y)^p\,dy.
\end{equation}
If $p \le 0$, then we can bound the integrand by $y^{-N}$ since $\lambda + y \ge 1$, and we immediately see that the right-hand side of \eqref{lambda_to_infinity} is bounded by a constant. In the case where $p > 0,$ we have that
\[\int\limits_1^{\infty}y^{-N}(\lambda + y)^p\,dy \le \lambda^p \int\limits_1^\infty y^{-N}\,dy\le C\lambda^p\]
for some $C > 0,$ which completes the proof.
\end{proof}

By a simple comparison argument, one can prove the analogous result for sums.

\begin{cor}\label{localized_sums}
If $p\ge 0$, then there exist $N_0,C,\lambda_0>0$ large enough so that
\begin{equation}
\sum\limits_{k=0}^\infty (1 + \big|\lambda - k\big|)^{-N}k^p \le C\lambda^p
\end{equation}
for all $\lambda \ge \lambda_0$ and all $N\ge N_0.$
\end{cor}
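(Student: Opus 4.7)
The plan is to dominate the sum by the integral from \lemref{localized_integrals} via a term-by-term comparison on unit intervals. For each $k \ge 1$, I will compare the summand $(1 + |\lambda - k|)^{-N} k^p$ with the integrand $(1 + |\lambda - r|)^{-N}(1+r)^p$ on $r \in [k-1, k]$. Two elementary estimates on this interval will do the job: first, since $r \ge k-1$ we have $1 + r \ge k$, so $(1+r)^p \ge k^p$ (using $p \ge 0$); second, by the triangle inequality $|\lambda - r| \le |\lambda - k| + 1$, which gives $1 + |\lambda - r| \le 2(1 + |\lambda - k|)$, hence $(1 + |\lambda - r|)^{-N} \ge 2^{-N}(1 + |\lambda - k|)^{-N}$. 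Multiplying these and integrating yields
\[
(1 + |\lambda - k|)^{-N} k^p \le 2^N \int_{k-1}^{k} (1 + |\lambda - r|)^{-N}(1+r)^p\, dr.
\]

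Summing this inequality over $k \ge 1$ produces a telescoping of the integration intervals into $[0, \infty)$, so
\[
\sum_{k=1}^\infty (1 + |\lambda - k|)^{-N} k^p \le 2^N \int_0^\infty (1 + |\lambda - r|)^{-N}(1 + r)^p\, dr.
\]
Since $p \ge 0$, \lemref{localized_integrals} applies and bounds the right-hand side by $C_N \max\{\lambda^p, 1\}$ once $N \ge N_0$. Choosing $\lambda_0 \ge 1$ (so that $\max\{\lambda^p, 1\} = \lambda^p$ for $p \ge 0$ and $\lambda \ge \lambda_0$), the bound becomes $\wt C_N \lambda^p$ as claimed.

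Finally, the $k = 0$ term contributes $(1 + \lambda)^{-N} \cdot 0^p$, which vanishes when $p > 0$ and equals $(1+\lambda)^{-N} \le 1 \le \lambda^0$ when $p = 0$; in either case it is absorbed into the constant. I do not expect a real obstacle here: the only minor point to watch is the factor $2^N$ produced by the comparison, but since $C$ is permitted to depend on $N$ this is harmless, matching the way \propref{exact_kernel_remainder} invokes this corollary with a fixed (large) $N$.
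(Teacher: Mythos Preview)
Your argument is correct and is precisely the ``simple comparison argument'' the paper alludes to but does not write out; the pointwise bounds $(1+r)^p \ge k^p$ and $1 + |\lambda - r| \le 2(1 + |\lambda - k|)$ on $[k-1,k]$ are exactly what is needed.

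One small remark: the corollary as stated asks for a single constant $C$ valid \emph{for all} $N \ge N_0$, so strictly speaking $C$ is not permitted to depend on $N$, contrary to your closing comment. Your proof produces the factor $2^N$, but this is harmless for a different reason than the one you give: since $(1 + |\lambda - k|)^{-N}$ is monotonically decreasing in $N$, the sum for $N \ge N_0$ is dominated by the sum with $N = N_0$, and hence the constant $C = 2^{N_0} C_{\text{Lem}}$ works uniformly. With that adjustment the proof matches the statement exactly.
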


\section{Geometric Estimates}
\label{geometric_estimates}

In this section, we prove growth estimates on derivatives of the Hadamard coefficients $u_\nu$, the inverse exponential map $(\wt x,\wt y)\mapsto \exp_{\wt x}^{-1}(\wt y)$, and the squared-distance function $d_{\wt g}(\wt x,\wt y)$ on the universal cover of a manifold without conjugate points. These estimates were used repeatedly in Sections \ref{hadamard} and \ref{asymptotics} in order to include derivatives in the statement of \thmref{logthm}. As in \thmref{logthm}, let $(M,g)$ be a smooth, compact Riemannian manifold without boundary and with no conjugate points. Denote by $(\wt M,\wt g)$ its universal cover, which is diffeomorphic to $\R^n$ by the Hadamard-Cartan Theorem.

\begin{prop}\label{u_nu_bound}
 Let $P,Q$ be elements of $ \textnormal{Diff}(\wt M)$, the algebra of $C^\infty$-bounded differential operators on $\wt M$, defined in the sense of \cite[Appendix A.1]{Shubin1992}. Then, we have that 
\begin{equation}
|P_{\wt x}Q_{\wt y}u_\nu(\wt x,\wt y)| \le C_1 e^{C_2 d_{\wt g}(\wt x,\wt y)}
\end{equation}
for some $C_1,C_2> 0$ which may depend on $\nu,\,P,$ and $Q.$ Here the subscripts on $P$ and $Q$ indicate the variable of differentiation.
\end{prop}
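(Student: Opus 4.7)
The approach is to induct on $\nu$, reducing each step to Jacobi field comparison estimates valid on $\wt M$ under the no-conjugate-points hypothesis together with the uniform boundedness of the curvature tensor of $\wt g$ and all its covariant derivatives (inherited from compactness of $M$). The base case $\nu = 0$ requires controlling $P_{\wt x}Q_{\wt y}\Theta^{-1/2}(\wt x,\wt y)$. Writing $\Theta(\wt x,\wt y)$ as the absolute value of the determinant of a matrix whose columns are normal Jacobi fields along the unique minimizing geodesic $\alpha_{\wt x\wt y}$ with initial conditions $J(0)=0$ and $\nabla_{\dot\alpha}J(0) = e_i$ for an orthonormal basis of $T_{\wt x}\wt M$, the no-conjugate-points condition forces $\Theta$ to be bounded below by a positive function of $d_{\wt g}(\wt x,\wt y)$ whose reciprocal grows at most exponentially. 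Higher derivatives of $\Theta$ with respect to either endpoint are polynomial expressions in Jacobi fields and their higher variations; each of these satisfies a (possibly inhomogeneous) Jacobi-type linear ODE along $\alpha_{\wt x\wt y}$ with coefficients controlled by the curvature tensor, and Gr\"onwall's inequality then yields the required bound $|P_{\wt x}Q_{\wt y}\Theta^{-1/2}(\wt x,\wt y)| \le C_1 e^{C_2 d_{\wt g}(\wt x,\wt y)}$.

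For the inductive step, assume the bound for $u_{\nu-1}$ together with all of its $\wt x,\wt y$ derivatives, and apply $P_{\wt x}Q_{\wt y}$ to
\[
u_\nu(\wt x,\wt y) = \Theta^{-1/2}(\wt x,\wt y)\int_0^1 s^{\nu-1}\,\Theta^{1/2}(\wt x,\alpha_{\wt x\wt y}(s))\,\Delta_{\wt g,\wt y}u_{\nu-1}(\wt x,\alpha_{\wt x\wt y}(s))\,ds.
\]
Differentiating under the integral and applying Leibniz and the chain rule produces a finite linear combination of terms in which some differentiations fall on $\Theta^{\pm 1/2}$, some on the composition $\wt z \mapsto \Delta_{\wt g,\wt z}u_{\nu-1}(\wt x,\wt z)$ evaluated at $\wt z = \alpha_{\wt x\wt y}(s)$, and the remainder on the endpoint-dependent curve $(\wt x,\wt y) \mapsto \alpha_{\wt x\wt y}(s)$. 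The first two categories are controlled by the base case and the inductive hypothesis, contributing factors that grow at most exponentially in $d_{\wt g}(\wt x,\alpha_{\wt x\wt y}(s)) = s\,d_{\wt g}(\wt x,\wt y) \le d_{\wt g}(\wt x,\wt y)$. Multiplying the pointwise bounds together and integrating in $s \in [0,1]$ produces the desired exponential bound for $u_\nu$, provided the third category of factors is also controlled exponentially.

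Controlling that third category is the main obstacle. First-order variations of $(\wt x,\wt y) \mapsto \alpha_{\wt x\wt y}(s)$ with respect to the endpoints are themselves Jacobi fields along $\alpha_{\wt x\wt y}$ (built from the standard one-parameter family of geodesics joining nearby endpoints); higher-order variations satisfy inhomogeneous Jacobi-type equations whose inhomogeneities are polynomial in the lower-order variations, the tangent $\dot\alpha_{\wt x\wt y}$, and in covariant derivatives of the Riemann tensor. Because the latter are uniformly bounded on $\wt M$, an iterated Gr\"onwall argument (analogous to the Jacobi field analysis in \cite[\S 3]{Blair2018}) gives exponential bounds in arc length, i.e.\ in $s\, d_{\wt g}(\wt x,\wt y)$, for the derivatives of $\alpha_{\wt x\wt y}(s)$ with respect to its endpoints to all orders. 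Combining this with the preceding paragraph closes the induction and yields the claimed estimate with constants $C_1, C_2$ depending only on $\nu$, $P$, and $Q$.
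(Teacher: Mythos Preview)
Your proposal is correct and follows essentially the same strategy as the paper: induction on $\nu$ reducing to the base case $u_0 = \Theta^{-1/2}$, with the base case handled via Jacobi field estimates (inhomogeneous Jacobi equations for higher variations, Gr\"onwall, and a lower bound on $\Theta$ coming from the no-conjugate-points hypothesis), all in close analogy to \cite[\S 3]{Blair2018}. The paper invokes \cite[Lemmas 3--4]{Bonthonneau2017} explicitly for the lower bound on $\Theta$ and on the matrix Jacobi field $\mathbb A(t)$, and carries out the Jacobi estimate by decomposing each higher covariant derivative of the variation field into a piece $Y_t$ solving the inhomogeneous equation with zero Cauchy data and a homogeneous piece $Z_t$ satisfying a two-point boundary value problem; your sketch subsumes this under ``iterated Gr\"onwall,'' which is fine since that is exactly what the decomposition accomplishes. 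One minor remark: you are actually more explicit than the paper about the inductive step, correctly identifying that differentiating the recursion for $u_\nu$ forces one to control endpoint derivatives of $\alpha_{\wt x\wt y}(s)$---the paper simply asserts ``by induction and \eqref{parametrix_coeffs}, it suffices'' and then proves the variation estimate \eqref{jacobi_exp}, which (evaluated at $t = s\rho_0$) is precisely what is needed for those endpoint derivatives.
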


\begin{proof}
By induction and \eqref{parametrix_coeffs}, it suffices to prove the bound for derivatives of the first Hadamard coefficient, $u_0(\wt x,\wt y) = \Theta(\wt x,\wt y)^{-\frac{1}{2}}$. Recalling the definition of the $\Theta$-function, we have
\[\Theta(\wt x,\wt y)= |\text{det}\, \lp D \exp_{\wt x}\rp_{\exp_{\wt x}^{-1}(\wt y)}|.\]
By \cite[Lemma 3]{Bonthonneau2017} we have that this function is uniformly bounded below by a constant times $d_{\wt g}(\wt x,\wt y)^{1-n}$ when $d_{\wt g}(\wt x,\wt y)$ is bounded away from zero, and hence $\Theta^{-\frac{1}{2}}$ is bounded above by $Cd_{\wt g}(\wt x,\wt y)^{\frac{n-1}{2}}$ off the diagonal. Hence, by the chain rule, it suffices to estimate the derivatives of $\Theta$ in order to obtain the bound on $u_0.$ Fix $\wt x_0,\wt y_0\in\wt M$ and assume without loss of generality that $d_{\wt g}(\wt x_0,\wt y_0)\ge 1$. Let $U,V$ be small open neighborhoods of $0$ in $\R^n$ and let $\varphi:U\to \wt M$ and $\psi:V\to\wt M$ be geodesic normal coordinate charts near $\wt x_0 $ and $\wt y_0$, respectively, with $\varphi(0) = \wt x_0$ and $\psi(0) = \wt y_0$. That is, the maps $w_j\mapsto \varphi(0,\dotsc,w_j,\dotsc,0)$ and $z_j\mapsto \psi(0,\dotsc,z_j,\dotsc,0)$ are geodesics in $\wt M$ passing through $\wt x_0$ and $\wt y_0,$ respectively. Then, since $P,Q\in \text{Diff}(\wt M)$, they can be expressed in the $w$ and $z$ coordinates as
\[P = \sum\limits_{|\alpha|\le j}p_\alpha(w)\partial_{w}^\alpha\hskip 0.3in \text{ and }\hskip 0.3in Q = \sum\limits_{|\beta|\le k}q_\beta (z)\partial_{z}^\beta\]
for some $j,k\ge 0$, where the coefficient functions $p_\alpha$, $q_\beta$ are uniformly bounded in the $C^\infty$ topology on any canonical coordinate patch of fixed radius \cite[Appendix A.1]{Shubin1992}. Therefore, it suffices to estimate iterated applications of $\partial_{w}$ and $\partial_z$ to $\Theta$ in these coordinates. To accomplish this, we will consider a $2n$-dimensional variation through geodesics, motivated by the argument in \cite[\S 3]{Blair2018}. Set $\rho_0 = d_{\wt g}(\wt x_0,\wt y_0)$ and define the map $F:U\times V\times \R\to\wt M$ by 
\[F(w,z,t) = \exp_{\varphi(w)}\lp\frac{t}{\rho_0} \exp_{\varphi(w)}^{-1}(\psi(z))\rp,\]
which is a $2n$-dimensional variation through geodesics in the sense that the map $t\mapsto F(w,z,t)$ is a geodesic parametrized with speed $d_{\wt g}(\varphi(w),\psi(z))/\rho_0$ for each fixed $w,z$. Observe that in the $w,z$ coordinates $(D\exp_{\wt x_0})_{\exp_{\wt x_0}^{-1}(\wt y_0)}$ is a matrix whose columns are given by $\partial_{z_j}F(0,0,\rho_0),$ and hence it suffices to show that the lengths of the vector fields $\partial_{z_j}F\big|_{t=\rho_0}$ and their covariant derivatives in the $w,z$ coordinate directions are bounded exponentially in $\rho_0$. Since $F$ is a variation through geodesics, we have that for each fixed $j$, $\partial_{z_j}F$ is a Jacobi field along the geodesic $t\mapsto F(w,z,t)$ (c.f. \cite{LeeBook2018}). To estimate the covariant derivatives of these Jacobi fields, one may argue in close analogy to the proof of \cite[Lemma 3.3]{Blair2018} with some small modifications. Since the proof is so similar, we will not reproduce it in its entirety; we will instead sketch the argument and point out the places where the differences occur. One notable difference is that we use \cite[Lemma 4]{Bonthonneau2017} to obtain certain lower bounds without relying on the nonpositive curvature assumption of \cite[Lemma 3.3]{Blair2018}. 

 The precise estimate we seek to prove is as follows. For any integer $k\ge 0$, let $\mathcal D^k$ denote some iterated combination of elements of the set 
 \[\mathscr D = \{D_{w_1},\dotsc, D_{w_n}, D_{z_1},\dotsc, D_{z_n}\}\]
  of order $k,$ where $D_{w_j}$ and $D_{z_j}$ denote covariant differentiation along the $w_j$ and $z_j$ coordinate directions, respectively.  Then for any $j = 1,\dotsc,n$, and all $t\in[0,\rho_0]$, we claim that
 \begin{equation}\label{jacobi_exp}
 |\D^k \partial_{z_j}F(0,0,t)|_{\wt g}+ |D_t\D^k \partial_{z_j}F(0,0,t)|_{\wt g} \le C_1 e^{C_2 \rho_0},
 \end{equation}
 for some constants $C_1,C_2 > 0$ which may depend on the particular combination of derivatives which make up $\D^k$. The same estimate holds if $\partial_{z_j}F$ is replaced by $\partial_{w_j}F$, although we will not need this fact. 
 
 To prove the claim in \eqref{jacobi_exp}, we begin by noting some facts about general Jacobi fields on manifolds without conjugate points. In the notation of \cite{Bonthonneau2017}, let us fix a geodesic $\gamma$ emanating from $\wt x_0\in\wt M$ and let $\mathbb A(t)$ be the matrix Jacobi field along $\gamma$ satisfying $\mathbb A(0) = 0$ and $D_t\mathbb A(0) = I.$ Given that the tangential component of such a Jacobi field is linear in $t$, it suffices to only consider the component which acts on the orthogonal complement of $\gamma'(t)$, which we will again denote by $\mathbb A(t)$ in a slight abuse of notation. Then, since the curvature of $\wt M$ is bounded below by some $\kappa < 0$, one has that $\|\mathbb A(t)\| \le \sinh(\kappa t)$ by the Rauch Comparison Theorem (c.f. \cite[Thm 2.3]{doCarmoBook1992}). To obtain a lower bound, we appeal to \cite[Lemma 4]{Bonthonneau2017}, which shows that if $\wt M$ has no conjugate points, then for any $\ve > 0$, there exists a constant $C > 0$ such that $\|\mathbb A(t)^{-1}\| \le C$ for all $t > \ve,$ or equivalently $\|\mathbb A(t)\| \ge C^{-1}.$ Hence, for any orthogonal Jacobi vector field $J(t)$ along $\gamma$ such that $J(0) = 0$, we have that 
 \begin{equation}\label{jacobi_upper_lower}
 C^{-1}|D_t J(0)|_{\wt g}\le |J(t)|_{\wt g}\le \sinh(\kappa t)|D_t J(0)|_{\wt g}
 \end{equation}
for $t > \ve.$ Since we have assumed that $\rho_0 = d_{\wt g}(\wt x_0,\wt y_0)\ge 1$, we may make the choice of $\ve \ll 1$ independently of $\wt x_0,\wt y_0.$
 
 The next step in the proof is to observe that $\D^k\partial_{z_j}F$ satisfies an inhomogeneous Jacobi equation of the form 

\begin{equation}\label{inhom_jacobi}
D_t^2(\mathcal D^k\partial_{z_j}F) + R(\D^k\partial_{z_j}F,\partial_t F)\partial_t F + S_k = 0
\end{equation}
where $R$ is the Riemannian curvature tensor, and $S_k$ is a vector field along the variation $F$ which is induced by the pullback of a sum of tensors on $M$, evaluated on a subcollection of the vector fields $\D^{k-1}\partial_{z_j}F$, $\D^{k-1}\partial_{z_j}F$, $\partial_t F,$ where $\D^{k-1}$ is some iterated combination of elements of $\mathscr D$ of order $k-1.$ This statement is nearly identical to equation (3.17) of \cite{Blair2018} and it is proved in exactly the same way. To obtain the estimate \eqref{jacobi_exp}, we will induct on $k$. For $k = 0,$ one can use that $\partial_{z_j}F$ satisfies the homogeneous Jacobi equation and argue as in \cite{Blair2018} to see that there is a uniform constant $C_0 > 0$ so that 
\[\frac{1}{2}\partial_t\lp |\partial_{z_j}F|^2_{\wt g} + |D_t\partial_{z_j}F|_{\wt g}^2\rp \le C_0 \lp  |\partial_{z_j}F|^2_{\wt g} + |D_t\partial_{z_j}F|_{\wt g}^2\rp.\]
Since $F(w,z,0) = \varphi(w)$, it is clear that $\partial_{z_j}F$ vanishes at $t = 0,$ and hence by \eqref{jacobi_upper_lower} and Gronwall's inequality, we obtain
\begin{equation}
|\partial_{z_j}F(0,0,t)|^2_{\wt g} + |D_t\partial_{z_j}F(0,0,t)|_{\wt g}^2 \le C_1 e^{C_2 t}
\end{equation}
for some $C_1,C_2 > 0$ and for all $t\in[0,\rho_0].$ Assume now that $k \ge 1$, and set $X_t = \D^k\partial_{z_j}F(0,0,t).$ We claim that $X_t$ solves the boundary value problem
\begin{equation}
\begin{cases}
D_t^2 X_t + R(X_t,\dot\sigma_t)\dot\sigma_t + S_k = 0\\
X_0 = 0, \hskip 0.2in X_{\rho_0} = f(y_0),
\end{cases}
\end{equation}
where $\sigma_t = F(0,0,t)$ is the geodesic connecting $\wt x_0$ and $\wt y_0$, and $f$ is a vector field which is uniformly bounded. To see that $X_t$ satisfies these boundary conditions, note that 
\[F(w,z,0) = \varphi(w) \hskip 0.2in \text{and}\hskip 0.2in F(w,z,\rho_0) = \psi(z),\]
and so $X_t$ always vanishes at $t = 0$, since its definition involves applying $\partial_{z_j}$ to $F$. Furthermore, if $\D^k$ consists of any derivatives in $w$, then $X_t$ also vanishes at $t = \rho_0.$ If $\D^k$ consists only of derivatives in $z$, then $X_{\rho_0}$ is computed by repeatedly differentiating the canonical chart map $\psi,$ and is therefore uniformly bounded since $\wt M$ has bounded geometry. We then decompose $X_t = Y_t + Z_t$, where $Y_t$ satisfies the same inhomogeneous equation as $X_t$ but with $Y_0 = D_tY_0 = 0$, and $Z_t$ solves the corresponding homogeneous equation with $Z_0 = 0,$ $Z_{\rho_0} = f(y_0) - Y_{\rho_0}.$ It is shown in the proof of \cite[Lemma 3.3]{Blair2018} that $Y_t$ satisfies 
\begin{equation}\label{Y_eqn}
|Y_t|_{\wt g}+ |D_t Y_t|_{\wt g} \le C_1 e^{C_2\rho_0}
\end{equation}
for all $t\in[0,\rho_0].$ It is this step which utilizes the induction hypothesis that \eqref{jacobi_exp} holds when taking fewer than $k$ covariant derivatives of $\partial_{z_j}F$. If $f(y_0) - Y_{\rho_0} = 0$, then $Z_t$ is identically zero by the no conjugate points assumption. Otherwise, we apply \eqref{jacobi_upper_lower} to obtain that $|D_tZ_0|_{\wt g}\le |Z_t|_{\wt g}$ for all $t\in[\ve,\rho_0]$. Evaluating at $t = \rho_0$ gives $|D_t Z_0|_{\wt g}\le |f(y_0) - Y_{\rho_0}|_{\wt g},$ and so repeating the argument for the $k = 0$ case and using the boundedness of $f$ along with \eqref{Y_eqn} shows that $|Z_t|_{\wt g} + |D_t Z_t|_{\wt g}\le C_1e^{C_2 \rho_0}$ after possibly increasing $C_1,C_2.$ Thus, we have shown that 
\[|X_t|_{\wt g} + |D_tX_t|_{\wt g}\le C_1 e^{C_2\rho_0}. \]
Recalling the definition of $X_t,$ we have completed the proof of \eqref{jacobi_exp}, and therefore \propref{u_nu_bound} is proved. A similar argument holds if one replaces $\partial_{z_j}F$ by $\partial_{w_j}F$ with the boundary conditions reversed, but our result does not require it.
\end{proof}

To prove \lemref{off_diagonal}, we also required similar estimates on the inverse exponential map and squared distance function, stated below.

\begin{lem}\label{exp_map}
In the notation of \lemref{u_nu_bound}, we have
\begin{equation}\label{exp_map_bound}
|P_{\wt x}Q_{\wt y}\lp\exp_{\wt x}^{-1}(\wt y)\rp|_{\wt g} \le C_1 e^{C_2d_{\wt g}(\wt x,\wt y)}.
\end{equation}
 Here, $C_1,C_2>0$ may depend on $\nu$, $P,$ and $Q$. Moreover, we have
\begin{equation}\label{distance_bound}
|P_{\wt x} Q_{\wt y}\lp d_{\wt g}(\wt x,\wt y)^2\rp|\le C_1 e^{C_2 d_{\wt g}(\wt x,\wt y)}.
\end{equation}
\end{lem}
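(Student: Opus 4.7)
The strategy is to stay within the Jacobi field framework from the proof of \propref{u_nu_bound}. Fix $\wt x_0,\wt y_0 \in \wt M$, set $\rho_0 = d_{\wt g}(\wt x_0,\wt y_0)$, and assume without loss of generality that $\rho_0 \ge 1$. Introduce the canonical coordinate charts $\varphi:U\to\wt M$ and $\psi:V\to\wt M$ centered at $\wt x_0$ and $\wt y_0$, together with the same variation
\[
F(w,z,t) = \exp_{\varphi(w)}\!\lp\tfrac{t}{\rho_0}\exp_{\varphi(w)}^{-1}(\psi(z))\rp
\]
as before. Since $P$ and $Q$ expand in chart coordinates as polynomial combinations of $\partial_w$ and $\partial_z$ with $C^\infty$-bounded coefficients, and ordinary partials differ from covariant derivatives only by uniformly bounded Christoffel terms, it suffices to prove $|\mathcal D^k v(0,0)|_{\wt g}\le C_1 e^{C_2\rho_0}$ for $v(w,z) := \exp_{\varphi(w)}^{-1}(\psi(z))$ and any $k$-fold composition $\mathcal D^k$ of operators from $\{D_{w_1},\dotsc,D_{w_n},D_{z_1},\dotsc,D_{z_n}\}$.

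The key identity is $v(w,z) = \rho_0\,\partial_t F(w,z,0)$, so that $\mathcal D^k v(0,0) = \rho_0\,\mathcal D^k\partial_t F(0,0,0)$. The plan is to reduce this to the estimate \eqref{jacobi_exp} from \propref{u_nu_bound} (together with its analogue for $\partial_{w_j}F$, which follows from the same Jacobi field argument with the boundary conditions at $t=0$ and $t=\rho_0$ interchanged). Writing $\mathcal D^k = \mathcal D' D_?$ with $D_?$ the innermost factor (either $D_{w_j}$ or $D_{z_j}$), the torsion-free identity $D_?\partial_tF = D_t\partial_?F$ gives
\[
\mathcal D^k\partial_t F = \mathcal D'D_t\partial_?F,
\]
and then the commutation rule $[D_t,D_{z_j}]X = R(\partial_t F,\partial_{z_j}F)X$ (and its $w_j$ analogue) lets one move $D_t$ past $\mathcal D'$ to the outside, producing the leading term $D_t(\mathcal D'\partial_?F)$ plus finitely many correction terms. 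Each correction consists of the bounded curvature tensor $R$ contracted with iterated covariant derivatives of $\partial_tF$, $\partial_{w_j}F$, and $\partial_{z_j}F$ of strictly smaller total order. The leading term is bounded by $C_1 e^{C_2\rho_0}$ via \eqref{jacobi_exp}, while the corrections close by induction on $k$. The factor of $\rho_0$ is absorbed into a slight increase of $C_2$, yielding \eqref{exp_map_bound}.

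For \eqref{distance_bound}, write $d_{\wt g}(\wt x,\wt y)^2 = \langle v(w,z),v(w,z)\rangle_{\wt g_{\varphi(w)}}$. Since $(\wt M,\wt g)$ has uniformly $C^\infty$-bounded geometry, all coordinate derivatives of $\wt g$ are uniformly bounded, and the Leibniz rule expresses $\mathcal D^k\langle v,v\rangle_{\wt g}$ as a finite sum of terms of the form $\langle\mathcal D^av,\mathcal D^bv\rangle_{\wt g}$ with $a + b\le k$, modulated by bounded metric factors. Each such term is at most $(C_1 e^{C_2\rho_0})^2$ by \eqref{exp_map_bound}, which gives \eqref{distance_bound} after absorbing the square into $C_2$.

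The main technical obstacle will be the bookkeeping in the inductive step: each commutation of $D_t$ past a spatial covariant derivative generates a curvature term involving contractions of $\partial_tF$, $\partial_{z_j}F$, $\partial_{w_j}F$ and their iterated covariant derivatives, and closing the induction requires verifying that every resulting term is either controlled directly by the analogue of \eqref{jacobi_exp} or factors as a bounded curvature contraction with Jacobi-field expressions of strictly smaller total order, for which the inductive hypothesis applies.
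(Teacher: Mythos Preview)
Your argument is correct and takes a genuinely different route from the paper's. The paper proceeds via the implicit function theorem: it introduces $G(\wt x,\wt y;r,\omega)=\exp_{\wt x}(r\omega)-\wt y$, so that $(r,\omega)=\exp_{\wt x}^{-1}(\wt y)$ is the implicit solution of $G=0$, and then invokes the standard formula $\partial_a b = -(\partial_b G)^{-1}\partial_a G$ together with its higher-order iterates. This requires two separate ingredients: exponential bounds on the derivatives of the forward map $(\wt x,r,\omega)\mapsto\exp_{\wt x}(r\omega)$ (obtained from Jacobi-field estimates), and a bound on the inverse Jacobian $(\partial_{r,\omega}G)^{-1}$, the latter coming from the lower bound on orthogonal Jacobi field norms in \cite[Lemma~4]{Bonthonneau2017}. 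Your approach instead exploits the identity $\exp_{\varphi(w)}^{-1}(\psi(z))=\rho_0\,\partial_tF(w,z,0)$ to stay entirely within the covariant Jacobi-field calculus already set up for \propref{u_nu_bound}, reducing directly to the estimate \eqref{jacobi_exp} (and its $\partial_{w_j}F$ analogue, which the paper notes holds but does not itself use) via repeated commutation of $D_t$ with the spatial covariant derivatives. What you gain is conceptual economy---no separate inverse-matrix estimate is needed, since the Jacobi-field lower bound has already been absorbed into the proof of \eqref{jacobi_exp}---at the cost of the curvature-commutator bookkeeping you flag at the end. The induction does close: each commutation spawns a curvature contraction in which every factor is either a $\mathcal D^a\partial_{w_i}F$ or $\mathcal D^a\partial_{z_i}F$ (handled by \eqref{jacobi_exp} and its analogue), a covariant derivative of $R$ (bounded by bounded geometry), or a $\mathcal D^b\partial_tF$ with $b<k$ (handled by the inductive hypothesis). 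Your treatment of \eqref{distance_bound} agrees with the paper's.
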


\begin{proof}
First let us note that \eqref{distance_bound} follows immediately from \eqref{exp_map_bound} and the fact that $\wt M$ has bounded geometry, since $d_{\wt g}(\wt x,\wt y)^2 = |\exp_{\wt x}^{-1}(\wt y)|_{\wt g}^2$. So we only need to show \eqref{exp_map_bound}. Since the metric on $\wt M$ is uniquely geodesic, the map $\exp_{\wt x}^{-1}(\wt y)$ is globally defined and $C^\infty$. We can write the action of this map as
\[(\wt x,\wt y)\mapsto (r(\wt x,\wt y),\omega(\wt x,\wt y))\in \R^+\times S^*\wt M,\]
provided that we avoid a neighborhood of the diagonal in $\wt M\times \wt M.$ We claim that the $\wt x,\wt y$ derivatives of this map are bounded exponentially in $d_{\wt g}(\wt x,\wt y)$. Furthermore, we may recall that by discussions from the proof of \propref{u_nu_bound}, it suffices to prove this in canonical coordinates. For this, we take note of the following general fact. If $G\in C^{\infty}(\R^n\times \R^n)$ and $b\in C^\infty(\R^n)$ are such that $G(a,b(a)) = 0$ and $\partial_b G(a,b(a))$ is invertible, we have that $\partial_a G(a,b(a)) + \partial_b G(a,b(a))\partial_a b(a) = 0,$ and hence
\begin{equation}\label{b_eqn}
\partial_ab(a) = -\partial_b G(a,b(a))^{-1}\partial_a G(a,b(a)).
\end{equation}
By repeated differentiation of the equation $G(a,b(a)) = 0$ with respect to $a$, one obtains that for any multiindex $\alpha,$ we can express $\partial_a^\alpha b(a)$ in terms of $\partial_b G(a,b(a))^{-1}$ times a finite linear combination of terms involving factors of $\partial_a^{\beta}\partial_b^\gamma G(a,b(a))$ for $|\beta|+|\gamma|\le |\alpha|$ and factors of the form $\partial_a^{\alpha'}b(a)$ for $|\alpha'|\le |\alpha| - 1.$ One can then use induction and \eqref{b_eqn} to show that if $|\alpha| = N$, then there exists a constant $C_N,k_N > 0$ so that 
\begin{equation}\label{b_derivs}
|\partial_a^\alpha b(a)| \!\le\! C_N\!\!\!\!\!\!\!\!\! \sum\limits_{|\beta| + |\gamma| \le N}\!\!\!\!\!\!\!\! |\partial_a^\beta\partial_b^\gamma G(a,b(a))|^{N}\!\left[ \left\|\partial_bG(a,b(a))^{-1}\right\|\!\!\lp 1 \!+\! \|\partial_b G(a,b(a))^{-1}\|^{k_N}\!\!\rp\!\!\right],
\end{equation}
where $\|\cdot\|$ here denotes the usual matrix norm.
 We now consider, in some chosen canonical coordinates $(\wt x,\wt y)$ on $\wt M$ and standard polar coodinates $(r,\omega)$ on $T_{\wt x}^*\wt M$, the function
\[G(\wt x,\wt y;r,\omega) = \exp_{\wt x}(r\omega) - \wt y.\]
So in the notation of the preceding discussion, we would have $a = (\wt x,\wt y)$ and $b(a) = (r(\wt x,\wt y),\omega(\wt x,\wt y)) = \exp_{\wt x}^{-1}(\wt y).$ By \lemref{exp_map}, we have that derivatives of $G$ are bounded exponentially in $r$. Restricted to the set where $G = 0$, we know that $r = d_{\wt g}(\wt x,\wt y)$, and hence for any $N$, there exist constants $C_N,c_N > 0$ such that
\begin{equation}\label{G_bound}
|\partial^N G|\le C_Ne^{c_Nd_{\wt g}(\wt x,\wt y)}.
\end{equation}
 Here $\partial^N$ denotes any combination of derivatives in $\wt x,\wt y,r,\omega$ with total order $N$. In what follows, we will assume that all quantities are evaluated where $r\omega = \exp_{\wt x}^{-1}(\wt y)$, unless otherwise specified. By \eqref{b_derivs} and \eqref{G_bound}, it only remains to bound the inverse matrix $\partial_{r,\omega}G^{-1}$. We achieve this by expressing it in terms of Jacobi fields between $\wt x$ and $\wt y$. In particular, $\partial_r G$ is exactly the velocity of the geodesic connecting $\wt x$ and $\wt y$, and therefore has norm 1. Also, we have that $\partial_\omega G$ is an orthogonal matrix whose columns are normal Jacobi fields $\{J_k\}_{k=2}^n$ along the geodesic connecting $\wt x$ and $\wt y$ which vanish at $\wt x$. Thus, the elements of $\partial_\omega G$ are bounded exponentially in $r$, and since the columns are orthogonal, $\partial_{r,\omega} G^T\partial_{r,\omega} G$ is a diagonal matrix $\mathbb D$ whose entries are the norms $|J_k|_g^2$ (setting $J_1 = \partial_r G$), which vanish only at $r = 0$ and are otherwise bounded away from zero \cite[Lemma 4]{Bonthonneau2017}. Thus, $\partial_{r,\omega} G^{-1} = \mathbb D^{-1}\partial_{r,\omega} G^T $ is also bounded exponentially in $r$, provided we avoid a neighborhood of $r = 0.$ Combining this with \eqref{b_derivs} and \eqref{G_bound}, the proof is complete.
\end{proof}

\section{Proof of \lemref{spectral_cluster_log}}
\label{cluster_appendix}
A key component in the proof of \thmref{logthm} with the inclusion of derivatives in $x,y$ was the spectral cluster estimate
\begin{equation}\label{spectral_cluster_d_eqn}
\sum\limits_{\lambda_j\in[\lambda,\lambda+A]}|\partial_x^\alpha\varphi_j(x)|^2 \le C_1 \lambda^{2|\alpha|}\lp A\lambda^{n-1} + Ae^{C_2/A}\max\{\lambda^{\frac{n-1}{2}},\lambda^{n-3}\}\rp
\end{equation}
for $0 < A \le 1.$ We provide a summary of the proof here, but the techniques are mostly a repetition of arguments presented in Section \ref{asymptotics}, so we do not give all the details. We begin in a manner analogous to the exposition of \cite[\S 3.2]{SoggeBook2014}. We introduce a Schwartz function $\beta\in \mathscr S(\R)$ such that $\beta \ge 0$, $\beta(0) = 1$, and $\wh\beta(t) = 0$ for $|t|\ge \frac{1}{2}\inj(M).$ This function will serve a similar role to that of $\rho$ throughout the previous sections of this article, but the key difference is the nonnegativity assumption, which is critical in what follows. Since $\beta(0) =1$, there exists some $\delta > 0$ such that $\beta(\tau) \ge \frac{1}{2}$ for $|\tau| \le \delta.$ Then, 
\[\sum\limits_{|\lambda_j-\lambda| \le A\delta} |\partial_x^\alpha\varphi_j(x)|^2  \le 2\sum\limits_{j=0}^\infty \beta\lp\frac{\lambda-\lambda_j}{A}\rp|\partial_x^\alpha \varphi_j(x)|^2,\]
where we are able to write the summation over all $j$ by the nonnegativity of $\beta$. Since $[\lambda,\lambda+A]$ can be covered by a fixed, finite number of intervals of the form ${|\lambda - \lambda_j|\le A\delta}$, we have that
\[\sum\limits_{\lambda_j\in [\lambda,\lambda + A]}|\partial_x^\alpha\varphi_j(x)|^2 \le C\sum\limits_{j=0}^\infty \beta\lp\frac{\lambda - \lambda_j}{A}\rp|\partial_x^\alpha\varphi_j(x)|^2\]
for some constant $C > 0.$
By Fourier inversion, we have
\begin{align*}
\beta\lp\frac{\lambda - \lambda_j}{A}\rp  = \frac{1}{2\pi}\int\limits_{-\infty}^\infty A\wh\beta(At)e^{it(\lambda-\lambda_j)}\,dt  = \frac{1}{\pi}\int\limits_{-\infty}^\infty A\wh\beta(At)e^{it\lambda}\cos(t\lambda_j)\,dt - \beta\lp\frac{\lambda + \lambda_j}{A}\rp.
\end{align*}
Since $\beta$ is Schwartz, we have an estimate of the form 
\[\left|\beta\lp\frac{\lambda+\lambda_j}{A}\rp\right|\le C_N(1 + A^{-1}|\lambda +\lambda_j|)^{-N}\]
 for any $N$. Recalling that $A^{-1} \ge 1$ and $\lambda_j \ge 0$ for all $j$, we have that 
\[\sum\limits_{\lambda_j\in [\lambda,\lambda + A]}|\partial_x^\alpha\varphi_j(x)|^2 \le \frac{1}{\pi}\left|\int\limits_{-\infty}^\infty A\wh\beta(At)e^{it\lambda}\partial_x^\alpha\partial_y^\alpha K(t,x,y)\big|_{x = y}\,dt\right| + \mathcal O\lp \lambda^{-N}\rp,\]
for any $N$ as $\lambda \to \infty,$ where the implicit constant in the $\mathcal O(\lambda^{-N})$ term is independent of $A\in (0,1]$. By \propref{parametrix_error}, the proof of \eqref{spectral_cluster_d_eqn} can be reduced to showing that
\begin{align*}
&\frac{1}{\pi}\left|\int\limits_{-\infty}^\infty A\wh\beta(At)e^{it\lambda}\partial_x^\alpha\partial_y^\alpha K_N(t,x,y)\big|_{x= y}\,dt \right| \le C_1\lambda^{2|\alpha|}\lp A\lambda^{n-1} + Ae^{C_2/A}\max\{\lambda^{\frac{n-1}{2}},\lambda^{n-3}\} \rp,
\end{align*}
where $K_N(t,x,y)$ is the $N$ partial sum of the Hadamard parametrix, defined by \eqref{parametrix_partial_sums}.
This is proved by repeating the arguments from Section \ref{asymptotics} with $\frac{\sin t\lambda}{t}$ replaced by $e^{it\lambda}$, yielding an integrand which is one degree less singular in $t$, which then produces one lower power of $\lambda$ in the asymptotic expansion. In particular, by the proof of \lemref{leading_term_mu}, we have
\begin{equation}\label{cluster_leading_term}
\left|\partial_x^\alpha\partial_y^\alpha\lp u_0(x,y)\!\!\int\limits_{-\infty}^\infty \!\!\! A\wh\beta(At)e^{it\lambda}\partial_t W_0(t,d_g(x,y))\,dt\rp \bigg|_{x=y}\right| \le C A \lambda^{n-1 + 2|\alpha|}.
\end{equation}
For $\nu \ge 1$, we can repeat the proof of \lemref{near_diagonal_terms} to obtain
\begin{equation}\label{cluster_on_diag}
\left|\partial_x^\alpha\partial_y^\alpha\!\lp\!\! u_\nu(x,y)\!\! \int\limits_{-\infty}^\infty\!\!\! A\wh\beta(At)e^{it\lambda} \partial_tW_\nu(t,d_g(x,y))\rp \bigg|_{x=y} \right| \le C_\nu\max\{\lambda^{n-2\nu + 2|\alpha|},e^{C/A}\}.
\end{equation}
 That the exponent here is $n-2\nu + 2|\alpha|$ rather than $n-\nu-1 +2|\alpha|$ is due to the fact that in the integration by parts used to obtain \eqref{near_diag_ibp}, we only obtain the term where $\ell = 0$, since $\exp_x^{-1}(x) = 0$. Also, recall that in the proof of \lemref{near_diagonal_terms}, the $e^{C/A}$ term yielded a factor of $\lambda^{\delta}$ for some small $\delta > 0$, but this was due to the fact that we chose $A = \frac{1}{c\log\lambda}$. Since we have stated the lema for arbitrary $A$, we leave the above as is. Finally, for the terms arising from the non-identity elements of the deck transformation group, we have
\begin{align}\label{cluster_off_diag}
\begin{split}
&\left|P_{\wt x}Q_{\wt y}\lp u_\nu(\wt x,\gamma \wt y)\int\limits_{-\infty}^\infty A \wh\beta(At)e^{it\lambda} \partial_t W_\nu(t,d_{\wt g}(\wt x,\gamma\wt y))\rp\bigg|_{\wt x=\wt y}\right| \le C_\nu e^{C/A}\max\{\lambda^{\frac{n-1}{2} - \nu +2|\alpha| },1\}
\end{split}
\end{align}
for any $P,Q\in \textnormal{Diff}(\wt M)$ of orders $|\alpha|$ and $|\beta|,$ respectively, by the arguments in the proof of \lemref{off_diagonal}. Combining these estimates with the fact that there are at most $\mathcal O\lp e^{C/A}\rp$ deck transformations $\gamma$ for which the corresponding term is nonzero, we thus obtain \eqref{spectral_cluster_d_eqn}.

\bibliography{keeler_bibliography}{}
\bibliographystyle{plain}


\end{document}